\newtheorem{theorem}{Theorem}
\newtheorem{definition}{Definition}
\newtheorem{lemma}{Lemma}
\newtheorem{proposition}{Proposition}
\newtheorem{corollary}{Corollary}
\newtheorem{remark}{Remark}
\newtheorem{example}{Example}
\date{}
\numberwithin{equation}{section}
\numberwithin{theorem}{section}
\numberwithin{lemma}{section}
\numberwithin{corollary}{section}
\numberwithin{remark}{section}
\numberwithin{proposition}{section}
\numberwithin{definition}{section}
\def \Div {\mathrm{div}}
\def \R {\mathbb{R}}
\def \diam {\mathrm{diam}}
\def \dist {\mathrm{dist}}
\newcommand{\tr}{\mathrm{Tr}}
\newcommand{\defeq}{\mathrel{\mathop:}=}
\begin{document}

\title[Regularity for models with non-homogeneous degeneracy]{Geometric gradient estimates for fully nonlinear models with non-homogeneous degeneracy and applications}

\author[J.V. da Silva]{Jo\~{a}o Vitor da Silva}
\address{Departamento de Matem\'atica \hfill\break \indent Instituto de Ci\^{e}ncias Exatas \hfill\break \indent Universidade de Bras\'{i}lia
\hfill\break \indent Campus Universit\'{a}rio Darcy Ribeiro, CEP: 70910-900, Bras\'{i}lia - DF - Brazil.}
\email{J.V.Silva@mat.unb.br}

\author[G.C. Ricarte]{Gleydson C. Ricarte}
\address{Departamento de Matem\'atica \hfill\break \indent Universidade Federal do Cear\'{a}\hfill\break \indent Campus do Pici, CEP: 60455-760 Fortaleza-CE, Brazil}
\email{ricarte@mat.ufc.br}

\begin{abstract}
We establish sharp $C_{\text{loc}}^{1, \beta}$ geometric regularity estimates for bounded solutions of a class of fully nonlinear elliptic equations with non-homogeneous degeneracy, whose model equation is given by
$$
    \left[|Du|^p+\mathfrak{a}(x)|Du|^q\right]\mathcal{M}_{\lambda, \Lambda}^{+}(D^2 u)= f(x, u) \quad \text{in} \quad \Omega,
$$
for a bounded and open set $\Omega \subset \R^N$, and appropriate data $p, q \in (0, \infty)$,  $\mathfrak{a}$ and $f$. Such regularity estimates simplify and generalize, to some extent, earlier ones via totally different modus operandi. Our approach is based on geometric tangential methods and makes use of a refined oscillation mechanism combined with compactness and scaling techniques. In the end, we present some connections of our findings with a variety of nonlinear geometric free boundary problems and relevant nonlinear models in the theory of elliptic PDEs, which may have their own interest. We also deliver explicit examples where our results are sharp.

\bigskip

\noindent \textbf{Keywords:} Fully nonlinear elliptic problems, operators with non-homogeneous degeneracy, sharp H\"{o}lder gradient estimates.

\bigskip

\noindent \textbf{MSC (2010)}: 35B65, 35J60, 35J70, 35R35.
%\tableofcontents
\end{abstract}

\maketitle
%\newpage

\section{Introduction}\label{s1}

In this work we shall derive sharp $C_{\text{loc}}^{1. \beta}$ geometric regularity estimates for solutions of a class of nonlinear elliptic equations having a non-homogeneous double degeneracy, whose mathematical model is given by
\begin{equation}\label{1.1}
   \mathcal{H}(x, Du)F(x, D^2 u) = f(x, u) \quad \text{in} \quad \Omega,
\end{equation}
for a $\beta \in (0, 1)$, a bounded and open set $\Omega \subset \R^N$, $f \in C^0(\Omega\times \R)\cap L^{\infty}(\Omega\times \R)$, where $F$ is assumed to be a second order, fully nonlinear (uniformly elliptic) operator, i.e., it is nonlinear in its highest (second) order derivatives.

Throughout this work we will suppose the following structural conditions:
\begin{itemize}
  \item[(A0)]({{\bf Continuity and normalization condition}})
  $$
  \text{Fixed}\,\, \Omega \ni x \mapsto F(x, \cdot) \in C^{0}(\text{Sym}(N)) \quad  \text{and} \quad F(\cdot, \text{O}_{N\times N}) = 0.
  $$
  \item[(A1)]({{\bf Uniform ellipticity}}) For any pair of matrices $\mathrm{X}, \mathrm{Y}\in Sym(N)$
$$
    \mathcal{M}_{\lambda, \Lambda}^-(\mathrm{X}-\mathrm{Y})\leq F(x, \mathrm{X})-F(x, \mathrm{Y})\leq\mathcal{M}_{\lambda, \Lambda}^+(\mathrm{X}-\mathrm{Y})
$$
where $\mathcal{M}_{\lambda, \Lambda}^{\pm}$ are the \textit{Pucci's extremal operators} given by
\[
   \mathcal{M}_{\lambda, \Lambda}^-(\mathrm{X})\defeq \lambda\sum_{e_i>0}e_i+\Lambda\sum_{e_i<0}e_i\quad\textrm{ and }\quad \mathcal{M}_{\lambda, \Lambda}^+(X)\defeq \Lambda\sum_{e_i>0}e_i+\lambda\sum_{e_i<0}e_i
\]
for \textit{ellipticity constants} $0<\lambda\leq \Lambda< \infty$, where $\{e_i(\mathrm{X})\}_{i=1}^{N}$ are the eigenvalues of $\mathrm{X}$.

Additionally, for our Theorem \ref{main1} (resp. Corollary \ref{Cormain1}), we must require some sort of continuity assumption on coefficients:

  \item[(A2)]({{\bf $\omega-$continuity of coefficients}}) There exist a uniform modulus of continuity $\omega: [0, \infty) \to [0, \infty)$ and a constant $\mathrm{C}_{\mathrm{F}}>0$ such that
$$
\Omega \ni x, x_0 \mapsto \Theta_{\mathrm{F}}(x, x_0) \defeq \sup_{\substack{\mathrm{X} \in  Sym(N) \\ \mathrm{X} \neq 0}}\frac{|F(x, \mathrm{X})-F(x_0, \mathrm{X})|}{\|\mathrm{X}\|}\leq \mathrm{C}_{\mathrm{F}}\omega(|x-x_0|),
$$
which measures the oscillation of coefficients of $F$ around $x_0$. For simplicity purposes, we shall often write $\Theta_{\mathrm{F}}(x, 0) = \Theta_{\mathrm{F}}(x)$.

Finally, for notation purposes we define
$$
\|F\|_{C^{\omega}(\Omega)} \defeq \inf\left\{\mathrm{C}_{\mathrm{F}}>0: \frac{\Theta_{\mathrm{F}}(x, x_0)}{\omega(|x-x_0|)} \leq \mathrm{C}_{\mathrm{F}}, \,\,\, \forall \,\,x, x_0 \in \Omega, \,\,x \neq x_0\right\}.
$$
\end{itemize}

In our studies, we enforce that the diffusion properties of the model \eqref{1.1} degenerate along an \textit{a priori} unknown set of singular points of solutions:
$$
   \mathcal{S}_0(u, \Omega^{\prime}) \defeq \{x \in \Omega^{\prime} \Subset \Omega: |Du(x)| = 0\}.
$$
Consequently, we will impose that $\mathcal{H}:\Omega \times \R^N \to [0, \infty)$ one behaves as
\begin{equation}\label{1.2}
     L_1 \cdot \mathcal{K}_{p, q, \mathfrak{a}}(x, |\xi|) \leq \mathcal{H}(x, \xi)\leq L_2 \cdot \mathcal{K}_{p, q, \mathfrak{a}}(x, |\xi|)
\end{equation}
for constants $0<L_1\le L_2< \infty$, where
\begin{equation}\label{N-HDeg}\tag{\bf N-HDeg}
   \mathcal{K}_{p, q, \mathfrak{a}}(x, |\xi|) \defeq |\xi|^p+\mathfrak{a}(x)|\xi|^q, \,\,\,\text{for}\,\,\, (x, \xi) \in \Omega \times \R^N.
\end{equation}

In turn, regarding the non-homogeneous degeneracy \eqref{N-HDeg}, we shall assume that the exponents $p, q$ and the modulating function $\mathfrak{a}(\cdot)$ fulfil
\begin{equation}\label{1.3}
   0< p \le q< \infty \qquad \text{and} \qquad \mathfrak{a} \in C^0(\Omega, [0, \infty)).
\end{equation}

One of the principal features of the model case \eqref{1.1} is its interplay between two distinct types of degeneracy rates, in accordance with the values of the modulating function $\mathfrak{a}(\cdot)$. For this reason, its diffusion process exhibits a kind of non-uniformly elliptic and doubly degenerate character, which mixes up two different $p-$degenerate type operators (cf. \cite{ART15}, \cite{BD1}, \cite{BD2}, \cite{BD3}, \cite{BDL19} and \cite{IS}).

Mathematically, \eqref{1.1} consists of a new model case of a fully nonlinear elliptic equation enjoying a non-homogeneous degenerate term, which constitutes a non-divergent counterpart of certain variational integrals of the calculus of variations with non-standard growth conditions as follows
\begin{equation}\label{DPF}\tag{\bf DPF}
 \displaystyle   \left(W_0^{1,p}(\Omega)+u_0, L^m(\Omega)\right) \ni (w, f) \mapsto \int_{\Omega} \left(\mathcal{K}_{p, q, \mathfrak{a}}(x, |Dw|)-fw\right)dx,
\end{equation}
where $\mathfrak{a}\in C^{0, \alpha}(\Omega,[0, \infty))$, for some $0< \alpha \leq 1$, $0<p\le q< \infty$ and $m \in (N, \infty]$, see \cite{BCM15III}, \cite{DeFM19I}, \cite{DeFM19II}, \cite{DeFM20}, \cite{DeFO} and \cite{LD18} for enlightening works.

In the light of recent research, one of our inspirations was the De Filippis' manuscript \cite{DeF20}, where $C_{\text{loc}}^{1, \gamma}-$regularity for viscosity solutions of
$$
  \left[|Du|^p+\mathfrak{a}(x)|Du|^q\right]F(D^2 u) = f \in L^{\infty}(\Omega), \quad (\text{with \eqref{1.3} in force}).
$$
was studied, for some $\gamma \in (0, 1)$ depending on universal parameters.

It is noteworthy to mention that our geometric approach is a byproduct of a new oscillation-type estimate (first considered in \cite{ATU17} and \cite{APR}, see also \cite{AdaSRT19}, \cite{ATU18}, \cite{daS19} and \cite{DSVII}) combined with a localized analysis, whose proof is conducted by studying two complementary cases:

\begin{enumerate}
  \item[\checkmark] If the gradient is small with a controlled magnitude, then a balanced perturbation of the $\mathfrak{F}-$harmonic profile leads to the inhomogeneous problem at the limit via a stability argument in a $C^1-$fashion.
  \item[\checkmark] On the other hand, if the gradient has a uniform lower bound, i.e. $|Du|\geq L_0>0$, then classical estimates (see, e.g. \cite{C89}, \cite{CC95} and \cite{Tru88}) can be enforced since the operator becomes uniformly elliptic:
  $$
  \mathcal{M}_{\lambda, \Lambda}^-(D^2 u)\leq C_0(L^{-1}_0, \|f\|_{L^{\infty}(\Omega)}) \,\, \text{and} \,\, \mathcal{M}_{\lambda, \Lambda}^+(D^2 u)\geq -C_0(L^{-1}_0, \|f\|_{L^{\infty}(\Omega)}).
  $$

\end{enumerate}
 Such a strategy relies on original ideas from \cite[Theorem 1]{ALS15} and \cite[Theorem 10]{LL17}. Finally, different from \cite[Theorem 3.1]{ART15}, \cite[Theorem 1.1]{APR}, \cite[Theorem 1]{DeF20} and \cite[Theorem 1]{IS}, our approach does not make use of an iterative argument of a suitable switched problem (a sort of deviation from planes), neither invokes a blow-up argument as the one presented in \cite[Lemma 9]{LL17}.

Although viscosity solutions of \eqref{1.1} under structural assumptions (A0)-(A2), \eqref{1.2} and \eqref{1.3} are known to be locally of the class $C^{1, \gamma}$, such an optimal exponent was not addressed in such a study. For this reason, our main purpose will be obtaining such sharp estimates via an alternative, and totally different, geometric approach. In addition, we stress that such a quantitative information plays a decisive role in the development of several analytic and geometric problems in pure and applied mathematics, such as in the study of blow-up analysis, related weak geometric and free boundary problems, and for providing some Liouville type results, see \cite{ALS15}, \cite{BD0}, \cite{daSLR20}, \cite{daSO19}, \cite{daSOS18}, \cite{daSRS19I}, \cite{daSS18}, \cite{DSVI}, \cite{DSVII}, \cite{T15} and \cite{Tei16} for some enlightening examples.

\subsection{Statement of the main results}\label{ssec.def}

We will present some definitions, as well as some useful auxiliary results for our approach in this section.
Now, let us introduce the notion of viscosity solution for our operators.

\begin{definition}
  [{\bf Viscosity solutions}] A function $u \in C^{0}(\Omega)$ is a viscosity super-solution (resp. sub-solution) to \eqref{1.1} if whenever $\varphi \in C^2(\Omega)$ and $x_0 \in \Omega$ such that $u-\varphi$ has a local minimum (resp. a local maximum) at $x_0$, then
$$
   \mathcal{H}(x_0, D\varphi(x_0))F(x_0, D^2 \varphi(x_0)) \leq f(x_0, \varphi(x_0)) \qquad \text{resp.} \,\,\,(\cdots \geq f(x_0, \varphi(x_0)))
$$
Finally, $u$ is said to be a viscosity solution if it is simultaneously a viscosity super-solution and
a viscosity sub-solution.
\end{definition}

In order to measure the smoothness of solutions in suitable spaces, we are going to use the following norms and semi-norms (see, \cite[Section 1]{Kov99}):

\begin{definition}
  [{\bf $C^{1, \alpha}$ norm and semi-norm}] For $\alpha \in (0,1]$, $C^{1, \alpha}(\Omega^{\prime})$ denotes the space of functions $u$ whose spacial gradient $Du(x)$ there exists in the classical sense for every $x\in \Omega^{\prime} \Subset \Omega$ such that
$$
\|u\|_{C^{1, \alpha}(\Omega^{\prime})} \defeq \|u\|_{L^{\infty}(\Omega^{\prime})} + \|Du\|_{L^{\infty}(\Omega^{\prime})} + [u]_{C^{1, \alpha}(\Omega^{\prime})}
$$
is finite, where we have the semi-norm
\begin{equation}\label{Semi-Norm}
  [u]_{C^{1, \alpha}(\Omega^{\prime})} \defeq \sup_{\substack{x_0\in \Omega^{\prime} \\0< r\leq \diam(\Omega)}}  \inf_{\mathfrak{l \in \mathcal{P}_1}} \frac{\|u-\mathfrak{l}\|_{L^{\infty}(B_r(x_0)\cap \Omega^{\prime})}}{r^{1+\alpha}},
\end{equation}
where $\mathcal{P}_1$ means the spaces of affine functions. Hence, $u \in C^{1, \alpha}(\Omega^{\prime})$ implies every component of $Du$ is $C^{0, \alpha}(\Omega^{\prime})$ (see, \cite[Main Theorem]{Kov99}).
\end{definition}

Now we are in a position to state our main results. The first one establishes an optimal geometric estimate. In effect, it reads that if the source term is bounded and (A0)-(A2), \eqref{1.2} and \eqref{1.3} are in force, then any bounded viscosity solution of \eqref{1.1} belongs to $C^{1, \beta}$ at interior points, where
 \begin{equation}\label{SharpExp}
   \beta \in (0, \alpha_{\mathrm{F}}) \cap \left(0, \frac{1}{p+1}\right],
 \end{equation}
where $\alpha_{\mathrm{F}} = \alpha_{\mathrm{F}}(N, \lambda, \Lambda) \in (0, 1]$ is the optimal exponent to (local) H\"{o}lder continuity of gradient of solutions to homogeneous problem with ``frozen coefficients'' $F(D^2 \mathfrak{h}) = 0$ (see, \cite{C89}, \cite[Ch.5 \S 3]{CC95} and \cite{Tru88}).  We will present some examples where such an exponent is precisely $\alpha_{\mathrm{F}}=1 $ in Section \ref{Examples}.

\begin{theorem}[{\bf Sharp regularity at interior points}]\label{main1} Assume that assumptions (A0)-(A2), \eqref{1.2} and \eqref{1.3} there hold. Let $u$ be a bounded viscosity solution to \eqref{1.1} with  $f \in L^\infty(\Omega\times \R)$. Then, $u$ is $C^{1, \beta}$, at interior points, for $\beta$ satisfying \eqref{SharpExp}. More precisely, for any point $x_0 \in \Omega^{\prime}\Subset \Omega$ there holds
$$
       \displaystyle [u]_{C^{1, \beta}(B_r(x_0))}\leq C\cdot\left(\|u\|_{L^{\infty}(\Omega)} +1+ \|f\|_{L^{\infty}(\Omega\times \R)}^{\frac{1}{p+1}}\right),
$$
for $0<r < \frac{1}{2}$ where $C>0$ is a universal constant\footnote{A constant is said to be universal if it depends only on dimension, degeneracy and ellipticity constants, $\alpha_{\mathrm{F}}$, $\beta$, $L_1, L_2$ and $\|F\|_{C^{\omega}(\Omega)}$.}
\end{theorem}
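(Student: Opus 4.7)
The plan is to prove a pointwise $C^{1,\beta}$ estimate at an arbitrary $x_0 \in \Omega' \Subset \Omega$: I exhibit an affine function $\ell_{x_0}$ with $\|u - \ell_{x_0}\|_{L^\infty(B_r(x_0))} \le C r^{1+\beta}$ for all $0 < r \le r_0$, with $C$ controlled by universal constants times $\|u\|_{L^\infty(\Omega)} + 1 + \|f\|_{L^\infty}^{1/(p+1)}$, which through the semi-norm characterization \eqref{Semi-Norm} yields the stated estimate. First I would normalize: replacing $u$ by $\tilde u(x) = u(x_0 + r x)/K$ with $K \defeq \|u\|_{L^\infty(\Omega)} + 1 + \|f\|_{L^\infty(\Omega\times \R)}^{1/(p+1)}$ and $r$ small enough to absorb the oscillation of coefficients as well as the continuity of $\mathfrak a(\cdot)$ and $f$, this reduces matters to the following normalized claim: if $u$ solves \eqref{1.1} in $B_1$ with $\|u\|_{L^\infty(B_1)} \le 1$, $\|f\|_{L^\infty} \le \delta$, and $\|\Theta_F(\cdot, 0)\|_{L^\infty(B_1)} \le \delta$ for a sufficiently small universal $\delta > 0$, then there exists an affine $\ell$ with $|\ell(0)| + |D\ell| \le C_*$ and $\|u - \ell\|_{L^\infty(B_\rho)} \le C_* \rho^{1+\beta}$ for all $0 < \rho \le 1$.

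Second, I would establish this normalized claim via a discrete geometric iteration at dyadic scales $\rho^k$, for $\rho \in (0, 1/2)$ chosen universally. Inductively I build affine approximants $\ell_k(x) = a_k + b_k \cdot x$ satisfying, for each $k \ge 0$,
\begin{equation*}
\|u - \ell_k\|_{L^\infty(B_{\rho^k})} \le \rho^{k(1+\beta)}, \qquad |a_{k+1} - a_k| + \rho^k |b_{k+1} - b_k| \le C \rho^{k(1+\beta)},
\end{equation*}
so that $b_k \to b_\infty$ geometrically and $\ell_k$ converges to the desired affine function. The inductive step is carried out through the rescaling
\begin{equation*}
v_k(x) \defeq \frac{(u - \ell_k)(\rho^k x)}{\rho^{k(1+\beta)}}, \qquad x \in B_1,
\end{equation*}
which solves an equation of the same structural type with rescaled degeneracy factor $\tilde{\mathcal H}_k(x, \xi) \defeq \rho^{k[1 - \beta(p+1)]}\, \mathcal H\bigl(\rho^k x,\, \rho^{k\beta}\xi + b_k\bigr)$ and rescaled source of order $\rho^{k[1 - \beta(p+1)]} \|f\|_{L^\infty}$. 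The constraint $\beta \le 1/(p+1)$ from \eqref{SharpExp} is precisely what keeps this source under control while $\tilde{\mathcal H}_k$ continues to satisfy a bound of the form \eqref{1.2} with comparable constants depending on $|b_k|$ and $\rho$.

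Third, the analytic heart is the approximation lemma supporting the iteration, proved along the dichotomy announced in the introduction. Two regimes arise according to the size of $b_k$. In the \emph{small-gradient regime}, when $|b_k| \le \mu_0$ for a small universal threshold, a stability/compactness argument for viscosity solutions under (A0)--(A2), \eqref{1.2}, \eqref{1.3} yields that $v_k$ is uniformly close on $B_{1/2}$ to some solution $\mathfrak h$ of the frozen homogeneous problem $F(x_0, D^2 \mathfrak h) = 0$; by the Caffarelli--Trudinger theory \cite{C89,CC95,Tru88}, $\mathfrak h \in C^{1, \alpha_F}_{\loc}$, and its first-order Taylor polynomial at the origin produces an approximating affine $\ell$ with $\|\mathfrak h - \ell\|_{L^\infty(B_\rho)} \le C \rho^{1+\alpha_F}$. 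Choosing $\rho$ small enough so that $C\rho^{\alpha_F - \beta} \le 1/2$ (possible because $\beta < \alpha_F$) and $\delta$ small enough to absorb the approximation error, this closes the induction. In the complementary \emph{non-degenerate regime} $|b_k| > \mu_0$, one has $\mathcal H(\cdot, Du) \ge L_1 \mu_0^p / 2$ on a neighbourhood, so \eqref{1.1} becomes uniformly elliptic with $L^\infty$ right-hand side of universal size, and the classical $C^{1,\alpha_F}$ interior estimate directly supplies the next affine approximant.

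The main obstacle I anticipate lies in the small-gradient branch of the approximation lemma: one must verify that, as $\delta \to 0$ along a normalized family of solutions, the limit profile is governed by the \emph{uniformly elliptic} equation $F(x_0, D^2 \mathfrak h) = 0$, uniformly with respect to the modulator $\mathfrak a(\cdot)$ and the exponents $p,q$, rather than by a genuinely degenerate limit. This requires passing the non-homogeneous factor $\mathcal H(\cdot, Du + b_k)$ to the limit in the viscosity sense using its non-negativity together with the normalization $F(\cdot, 0) = 0$, and carefully tracking that the rescaled sources and coefficient oscillations remain small under the sharp choice $\beta \le 1/(p+1)$; any slack in these estimates would force the exponent $\beta$ below the sharp threshold.
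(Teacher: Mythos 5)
Your proposal follows the classical Caffarelli-type iteration with affine corrections $\ell_k$, which is precisely the route the paper explicitly declines, for a structural reason: the operator $v \mapsto \mathcal{H}(x, Dv)F(x, D^2 v)$ is not invariant under subtraction of affine functions. Concretely, the degeneracy factor in the equation satisfied by $v_k = (u - \ell_k)(\rho^k \cdot)/\rho^{k(1+\beta)}$ is, after normalization, comparable to $|\xi + b_k \rho^{-k\beta}|^p + \tilde{\mathfrak{a}}_k(x)\,|\xi + b_k\rho^{-k\beta}|^q$. When $b_k \neq 0$ this is \emph{not} a bound ``of the form \eqref{1.2}'' as you assert — its zero set sits at $\xi = -b_k \rho^{-k\beta}$ rather than at the origin, and no choice of a modulating coefficient absorbs the shift (at $\xi=-b_k\rho^{-k\beta}$ the shifted factor vanishes while any expression $|\xi|^p + \tilde{\mathfrak a}|\xi|^q$ does not). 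Consequently your Approximation Lemma, which is stated for the class $\mathcal{J}(F,\mathfrak{a},f)$ built on the unshifted structure \eqref{N-HDeg}, cannot be invoked at the inductive step. Making your route rigorous would require (i) a generalized compactness lemma for the translated family $|\xi+e|^p + \mathfrak{a}(x)|\xi+e|^q$ uniformly in the shift $e\in\R^N$, and (ii) a dichotomy measured against the \emph{current scale}, i.e.\ on $|b_k|\rho^{-k\beta}$ rather than on $|b_k|$ alone, since it is that ratio — not $|b_k|$ — which decides whether the rescaled equation is still degenerate or already uniformly elliptic. This is essentially the ``deviation from planes'' device of \cite{ART15} and \cite{IS}, which the authors identify in the introduction and deliberately avoid.

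The paper's own scheme sidesteps the obstruction entirely by never subtracting the linear part: the iteration runs on $u_k(x) = (u(\rho^k x) - u(0))/\mathcal{A}_k$ with $\mathcal{A}_k = \rho^{k(1+\beta)} + |Du(0)|\sum_{j=0}^{k-1}\rho^{k+j\beta}$, so the degeneracy factor keeps the exact structure \eqref{N-HDeg} at every step and the Approximation and Cutting Lemmas apply verbatim, with $\|f_k\|_\infty \le \rho^{k[1-\beta(p+1)]}\|f\|_\infty$ controlled precisely by $\beta\le\tfrac{1}{p+1}$. Lemmas \ref{lem.dy} and \ref{l3.3} then yield $\sup_{B_r(0)}|u-u(0)| \le \mathrm{M}_0\, r^{1+\beta}(1+|Du(0)|r^{-\beta})$, and the proof of Theorem \ref{main1} splits not on a fixed threshold for the drift but on whether $|Du(0)|\le r^\beta$: if so, the bound above already gives $\sup_{B_r}|u-\mathfrak{l}_0 u|\lesssim r^{1+\beta}$; if not, one zooms to the intrinsic radius $r_0=|Du(0)|^{1/\beta}$, uses the gradient estimate (Theorem \ref{GradThm}) to see that $|Du_{r_0}|$ is bounded away from zero near the origin, whence the equation becomes uniformly elliptic and classical $C^{1,\alpha}$ estimates close the argument. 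In revising, you should either supply the shifted approximation lemma and the scale-relative dichotomy, or adopt the translate-only scheme, which is cleaner in the doubly degenerate setting.
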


As a result of Theorem \ref{main1} we have the following (cf. \cite[Corollary 3.2]{ART15}):

\begin{corollary}\label{Cormain1} Assume that the assumptions of Theorem \ref{main1} are in force. Suppose $F$ to be one of the operators presented in Section \ref{Examples}. Then, $u$ is $C_{\text{loc}}^{1, \frac{1}{p+1}}(\Omega)$. Moreover, there holds
$$
       \displaystyle [u]_{C^{1, \frac{1}{p+1}}(B_r(x_0))}\leq C\cdot\left(\|u\|_{L^{\infty}(\Omega)} +1+ \|f\|_{L^{\infty}(\Omega\times \R)}^{\frac{1}{p+1}}\right).
$$
\end{corollary}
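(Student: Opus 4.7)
The plan is to deduce the corollary as an immediate consequence of Theorem \ref{main1} by verifying that for each of the operators $F$ listed in Section \ref{Examples} the optimal homogeneous-regularity exponent $\alpha_{\mathrm{F}}$ equals $1$; once this is in place, the constraint in \eqref{SharpExp} collapses to $\beta \in (0, \frac{1}{p+1}]$ and the endpoint $\beta = \frac{1}{p+1}$ becomes admissible.

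Concretely, I would proceed in two steps. First, I would verify operator by operator that $\alpha_{\mathrm{F}} = 1$. For the Laplacian and, more generally, for any constant-coefficient linear uniformly elliptic operator, classical Schauder theory yields $C^{\infty}$ regularity of $F$-harmonic profiles, so their gradients are Lipschitz and any exponent in $(0,1)$ is allowed in the semi-norm \eqref{Semi-Norm} up to the endpoint $1$. For concave (or convex) fully nonlinear uniformly elliptic operators, such as Hamilton-Jacobi-Bellman operators and, in dimension $N=2$, Pucci's extremal operators $\mathcal{M}_{\lambda, \Lambda}^{\pm}$, the Evans-Krylov theorem delivers $C^{2,\alpha}$ estimates for solutions of $F(D^2 \mathfrak{h}) = 0$, so once again $D\mathfrak{h}$ is Lipschitz and $\alpha_{\mathrm{F}} = 1$ is allowed. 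Any other operator on the Section \ref{Examples} list, e.g.\ operators falling in the nearly-Laplacian regime of Cordes-Nirenberg estimates or Isaacs operators enjoying $C^{1,1}$ interior regularity, is handled by the same principle.

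Second, since assumption \eqref{1.3} enforces $p > 0$, one has $\frac{1}{p+1} < 1 = \alpha_{\mathrm{F}}$, so the specific choice $\beta := \frac{1}{p+1}$ lies in the admissible set $(0, \alpha_{\mathrm{F}}) \cap (0, \frac{1}{p+1}]$ prescribed by \eqref{SharpExp}. Plugging this value into the conclusion of Theorem \ref{main1} produces precisely the estimate claimed in Corollary \ref{Cormain1}, with the universal constant absorbing its (now fixed) dependence on $\alpha_{\mathrm{F}} = 1$ as well as on the parameters listed in the footnote of Theorem \ref{main1}.

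The only genuine subtlety here is not analytical but bookkeeping: one has to inspect each concrete operator advertised in Section \ref{Examples} and confirm that its $F$-harmonic profile admits Lipschitz (or better) gradient regularity, so that $\alpha_{\mathrm{F}} = 1$ is a valid choice. Beyond this case-by-case verification, no additional machinery beyond Theorem \ref{main1} is required, and the corollary follows essentially by specializing $\beta$ in the master estimate.
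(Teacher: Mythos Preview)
Your proposal is correct and follows essentially the same route as the paper: recognize that for the operators in Section \ref{Examples} one has $\alpha_{\mathrm{F}}=1$, so \eqref{SharpExp} permits the endpoint choice $\beta=\frac{1}{p+1}$, and then apply Theorem \ref{main1}. One small remark: Pucci's extremal operators are concave/convex in every dimension, not just $N=2$, so Evans--Krylov applies without that restriction; and the paper phrases the argument by pointing back to the specific places \eqref{2.5}, \eqref{EqIterak}, \eqref{EqEstUnifElliOper} where $\beta$ enters, but this amounts to the same specialization you describe.
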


As another consequence of Theorem \ref{main1}, we also derive the sharp gradient growth-rate at interior points.

\begin{corollary}[{\bf Sharp gradient growth}]\label{Cormain2} Under the assumptions of Theorem \ref{main1},  there exists a universal constant $C>0$ such that for any point $x_0 \in B_{1/2}(0)$ and for all $0<r < \frac{1}{4}$
$$
   \displaystyle \sup_{B_r(x_0)} \frac{|D u(x)-Du(x_0)|}{r^{\beta}} \leq  C\cdot\left(\|u\|_{L^{\infty}(\Omega)} +1+ \|f\|_{L^{\infty}(\Omega\times \R)}^{\frac{1}{p+1}}\right).
$$
\end{corollary}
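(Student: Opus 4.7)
The plan is to derive the pointwise gradient growth directly from the Campanato-type $C^{1,\beta}$ seminorm estimate already furnished by Theorem \ref{main1}, using the equivalence between that seminorm and the ordinary H\"older seminorm of the gradient.

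First, I would apply Theorem \ref{main1} on an intermediate subdomain, e.g.\ $\Omega' = B_{3/4}(0)\Subset \Omega$, to obtain
\[
[u]_{C^{1,\beta}(B_{3/4}(0))}\;\leq\; C\cdot\Bigl(\|u\|_{L^{\infty}(\Omega)}+1+\|f\|_{L^{\infty}(\Omega\times \R)}^{1/(p+1)}\Bigr).
\]
Since $x_0\in B_{1/2}(0)$ and $r<1/4$, every ball $B_r(x_0)$ sits comfortably inside $B_{3/4}(0)$, so the Campanato--type seminorm localised at any pair $(x_0,r)$ that appears in the statement is controlled by the global one on $B_{3/4}(0)$.

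Next, I would invoke the Main Theorem of Kov\'ats (cited right after the definition of the seminorm \eqref{Semi-Norm}), asserting that the Campanato--type seminorm introduced in \eqref{Semi-Norm} is comparable, up to a universal multiplicative constant $C_\ast$, to the standard $C^{0,\beta}$ seminorm of $Du$. This yields, for every pair of points $x,y\in B_{3/4}(0)$,
\[
|Du(x)-Du(y)|\;\leq\;C_\ast\, [u]_{C^{1,\beta}(B_{3/4}(0))}\cdot|x-y|^{\beta}.
\]
Taking $y=x_0$ and $x\in B_r(x_0)$, we get $|Du(x)-Du(x_0)|\leq C\,r^{\beta}\cdot(\|u\|_{L^\infty(\Omega)}+1+\|f\|_{L^\infty(\Omega\times\R)}^{1/(p+1)})$; dividing by $r^\beta$ and passing to the supremum over $B_r(x_0)$ produces exactly the desired inequality.

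If one prefers a self-contained derivation rather than invoking Kov\'ats directly, one chooses near-optimal affine approximants $\ell_k(y)=a_k+b_k\cdot(y-x_0)$ on dyadic balls $B_{r_k}(x_0)$, $r_k=2^{-k}r$. From the Campanato estimate, $\|\ell_{k+1}-\ell_k\|_{L^\infty(B_{r_{k+1}}(x_0))}\lesssim r_k^{\,1+\beta}$, which gives $|b_{k+1}-b_k|\lesssim r_k^{\,\beta}$; a geometric summation then produces a limit $b_\infty$ with $|b_k-b_\infty|\lesssim r_k^{\,\beta}$, and $b_\infty$ is identified with $Du(x_0)$ in view of the classical differentiability that $C^{1,\beta}$ affords. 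Running the same construction at a generic $x\in B_r(x_0)$ and comparing the two affine families at the joint scale $|x-x_0|$ gives the stated bound. No serious obstacle arises; the corollary is essentially a translation of Theorem \ref{main1} from its Campanato form into pointwise H\"older form. The only point requiring mild care is that the equivalence constant $C_\ast$ is universal in the sense of the footnote to Theorem \ref{main1}, which is clear since Kov\'ats' constant depends only on dimension and $\beta$.
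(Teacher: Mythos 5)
Your proof is correct, but it takes a genuinely different route from the paper's. The paper proceeds by scaling: it sets
\[
u_{r,x_0}(x)=\frac{u(x_0+rx)-u(x_0)-rx\cdot Du(x_0)}{r^{1+\beta}},
\]
notes that $u_{r,x_0}$ solves a rescaled version of the equation, invokes Theorem~\ref{main1} only to conclude that $\|u_{r,x_0}\|_{L^\infty(B_{1/4})}$ is universally bounded (this is exactly the geometric Campanato estimate at scale $r$), and then re-applies the De Filippis gradient estimate (Theorem~\ref{GradThm}) to the rescaled equation to bound $\sup_{B_{1/8}}|Du_{r,x_0}|$, which unravels to the desired pointwise inequality after scaling back. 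You instead bypass the second application of the a priori gradient estimate: you read Theorem~\ref{main1} directly as a bound on the Campanato-type seminorm \eqref{Semi-Norm} and translate it into the H\"older seminorm of $Du$ via the Kov\'ats equivalence (or your dyadic-affine-approximant argument, which is the standard Campanato embedding proof). Both routes are valid. What the paper's approach buys is that it stays entirely within the scaling machinery already set up and never needs the abstract Campanato-to-$C^{1,\beta}$ embedding as a black box; what your approach buys is economy---it uses no PDE input beyond Theorem~\ref{main1} itself, so it is shorter and more clearly a formal consequence of the Campanato bound. The only point to be careful about, which you flag, is that the equivalence constant from Kov\'ats be universal (dimension and $\beta$ only), which it is. One could also note that the paper's proof, as written, actually produces the estimate on $B_{r/8}(x_0)$ rather than $B_r(x_0)$; this discrepancy is absorbed by reparametrizing $r$, and your version avoids this minor bookkeeping altogether.
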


From now on, we will label the critical zone of existing solutions as
$$
\mathcal{S}_{r, \beta}(u, \Omega^{\prime}) \defeq \left\{x_0 \in \Omega^{\prime} \Subset \Omega: |Du(x_0)| \leq r^{\beta}, \,\,\,\mbox{for}\,\,\,0\leq r\ll1\right\}.
$$

A geometric interpretation to Theorem \ref{main1} says that if $u$ solves \eqref{1.1} and $x_0 \in \mathcal{S}_{r, \beta}(u, \Omega^{\prime})$, then near $x_0$ we obtain
$$
    \displaystyle \sup_{B_r(x_0)} |u(x)|\leq |u(x_0)|+C\cdot r^{1+\beta},
$$

On the other hand, from a geometric viewpoint, it is a pivotal qualitative information to obtain the (counterpart) sharp lower bound estimate for such operators with non-homogeneous degeneracy. Such a feature is denominated \textit{Non-degeneracy property} of solutions.

Therefore, under a natural, non-degeneracy assumption on $f$, we further obtain the precise behavior of solutions at certain interior singular regions.

\begin{theorem}[{\bf Non-degeneracy estimate}]\label{main3} Suppose that the assumptions of Theorem \ref{main1} are in force. Let $u$ be a bounded viscosity solution to \eqref{1.1} with $f(x, u)=f(x) \geq \mathfrak{m}>0$ in $\Omega$. Given $x_0 \in \mathcal{S}_{r, \beta}(u, \Omega^{\prime})$, there exists a constant $\mathfrak{c} = \mathfrak{c}(\mathfrak{m},\|\mathfrak{a}\|_{L^{\infty}(\Omega)}, L_1, N, \lambda, \Lambda, p,q, \Omega)>0$, such that
$$
  \displaystyle \sup_{\partial B_r(x_0)} u(x) \geq u(x_0) + \mathfrak{c} \cdot r^{1+\frac{1}{p+1}}\quad \text{for all} \quad  0<r < \frac{1}{2}.
$$
\end{theorem}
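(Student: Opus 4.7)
The plan is to argue by contradiction using comparison with an explicit radial barrier tuned to the scaling dictated by the $|Du|^p$ part of the degeneracy. Set $\gamma \defeq 1+\frac{1}{p+1}$ and define
$$
\Phi(x) \defeq \sup_{\partial B_r(x_0)} u \;+\; \mathfrak{c}\bigl(|x-x_0|^{\gamma}-r^{\gamma}\bigr),
$$
so that $\Phi\equiv \sup_{\partial B_r(x_0)}u$ on $\partial B_r(x_0)$ while $\Phi(x_0)=\sup_{\partial B_r(x_0)}u-\mathfrak{c}\,r^{\gamma}$. Writing $\rho\defeq |x-x_0|$, a direct computation gives $|D\Phi|=\mathfrak{c}\gamma\,\rho^{1/(p+1)}$ and a positive definite Hessian $D^2\Phi$ with radial and tangential eigenvalues both of order $\mathfrak{c}\,\rho^{-p/(p+1)}$; the key algebraic cancellation $\rho^{p/(p+1)}\cdot\rho^{-p/(p+1)}=1$ is built into the choice of exponent $\gamma$.

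Using the ellipticity upper bound $F(x,\cdot)\le \mathcal{M}^{+}_{\lambda,\Lambda}$ from (A1) together with the upper estimate in \eqref{1.2}, that cancellation yields
$$
\mathcal{H}(x,D\Phi)\,F(x,D^2\Phi) \;\le\; C_{\ast}\Bigl(\mathfrak{c}^{p+1}+\|\mathfrak{a}\|_{L^{\infty}(\Omega)}\,\mathrm{diam}(\Omega)^{\frac{q-p}{p+1}}\mathfrak{c}^{q+1}\Bigr)
$$
on $B_r(x_0)\setminus\{x_0\}$, for a constant $C_{\ast}$ depending only on $N,\lambda,\Lambda,L_1,L_2,p,q$. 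Choosing $\mathfrak{c}$ small as a function of the parameters listed in the statement drives the right-hand side below $\mathfrak{m}/2$, so that $\Phi$ is a classical strict supersolution of $\mathcal{H}F=f$ away from $x_0$. The singularity of $\Phi$ at $x_0$ creates no obstruction to the viscosity supersolution condition: since $\gamma>1$, any admissible $C^2$ test $\psi$ touching $\Phi$ from below at $x_0$ must have $D\psi(x_0)=0$, whence $\mathcal{H}(x_0,D\psi(x_0))=0$ by \eqref{1.2}, and the required inequality $\mathcal{H}(x_0,D\psi)F(x_0,D^2\psi)\le f(x_0)$ holds trivially (the left-hand side vanishes, whereas $f\ge \mathfrak{m}>0$). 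Hence $\Phi$ is a bona fide viscosity supersolution of \eqref{1.1} on the whole ball $B_r(x_0)$.

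Since $u$ is simultaneously a viscosity subsolution of the same equation and $u\le \sup_{\partial B_r(x_0)}u=\Phi$ on $\partial B_r(x_0)$ by construction, the comparison principle for the class of doubly degenerate fully nonlinear operators at hand (cf.\ \cite{BD1,BD2,DeF20} and the references in the Introduction) yields $u\le \Phi$ throughout $B_r(x_0)$. Evaluating at the center produces
$$
u(x_0)\;\le\;\Phi(x_0)\;=\;\sup_{\partial B_r(x_0)}u \;-\; \mathfrak{c}\,r^{\gamma},
$$
which is precisely the claimed non-degeneracy inequality. The main technical obstacle in this scheme is the application of comparison to a doubly degenerate operator against a barrier that is only $C^{1,1/(p+1)}$ at the touching point $x_0$; this is handled by the standard Ishii–Jensen doubling-of-variables machinery, combined with the observation above that the would-be obstruction at $x_0$ is absorbed by the vanishing of $\mathcal{H}(x_0,0)$ forced by the degeneracy structure \eqref{1.2}.
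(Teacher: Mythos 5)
Your proof takes essentially the same route as the paper: a radial barrier of the form $\mathfrak{c}\,|x-x_0|^{(p+2)/(p+1)}$ made into a strict supersolution by choosing $\mathfrak{c}$ small, followed by the Comparison Principle (Lemma \ref{comparison principle}) and evaluation at the center. The only organizational differences are that the paper first rescales to $B_1(0)$ and adds an auxiliary $\varepsilon>0$ to the numerator to force a contradiction, whereas you compare directly in $B_r(x_0)$; and, notably, you explicitly verify that the barrier remains a viscosity supersolution at the singular point $x_0$ (any $C^2$ test touching from below there has vanishing gradient, so $\mathcal{H}(x_0,\cdot)$ kills the left-hand side against $f(x_0)\ge\mathfrak{m}>0$) — a point the paper's proof leaves implicit but which your argument handles cleanly.
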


Our findings extend/generalize regarding non-variational scenario, former results (H\"{o}lder gradient estimates) from \cite[Theorem 3.1 and Corollary 3.2]{ART15}, \cite{BD1}, \cite{BD2}, \cite{BD3} and \cite{IS}, and to some extent, those from \cite[Theorem 1]{DeF20} by making using of different approaches and techniques adapted to the general framework of the fully nonlinear non-homogeneous degeneracy models.

Additionally, one of the main novelties of our approach consists of removing the restriction of analyzing $C_{\text{loc}}^{1, \alpha}$ regularity estimates just along the \textit{a priori} unknown set of singular points of solutions $\mathcal{S}_{0}(u, \Omega)$, where the ``ellipticity of the operator'' degenerates (see, for example, \cite{daSLR20}, \cite{daSO19}, \cite{daSOS18}, \cite{daSRS19I}, \cite{daSS18}, \cite{T14}, \cite{T15} and \cite{Tei16}, where improved regularity estimates were addressed along certain sets of degenerate points of existing solutions). Finally, they are even striking for the simplest toy model:
$$
 u\mapsto  \left[|Du|^p+\mathfrak{a}(x)|Du|^q\right]\Delta u  \quad (\text{a bounded mapping with \eqref{1.3} in force}).
$$

\subsection{Motivation: nonlinear variational/non-variational problems}

Regularity estimates for minimizers of integral functionals exhibiting degeneracy of double phase type and  non-uniform ellipticity feature have been the focus of intensive investigation over the last decades by several authors. A model case in question is given by the double phase functional \eqref{DPF}, whose studies date back to Zhikov's fundamental works in the context of Homogenization problems and Elasticity theory, as well as to give new instances of the occurrence of Lavrentiev phenomenon, see \textit{e.g.} \cite{Zhi93} and \cite{Zhi95}.

Notice that the Euler–Lagrange equation to \eqref{DPF} exhibits a type of non-uniform and doubly degenerate ellipticity, which mixes up two different kinds of $p-$Laplacian type operators:
$$
   - \Div(\mathcal{A}(x,\nabla u)) = f(x) \qquad \text{with}\qquad \mathcal{A}(x,\xi) \defeq p|\xi|^{p-2}\xi +q\mathfrak{a}(x)|\xi|^{q-2}\xi.
$$

By way of a reminder, it should be noted that minimizers to \eqref{DPF} play an important role in some contexts of Materials Science and engineering, where they describe the behavior of certain strongly anisotropic materials, whose hardening estates, connected to the gradient growth exponents, change point-wisely. Specifically, a mixture of two heterogeneous materials, with hardening $(p, q)$-exponents, can be performed according to the intrinsic geometry of the null set of the modeling coefficient $\mathfrak{a}(\cdot)$.

In this direction, recently Colombo and Mingione in the seminal works \cite{CM15} and \cite{CM15II} have addressed regularity issues of local minimizers $u:\Omega \to \R$ of a general class of variational integrals whose model functional is given \eqref{DPF} (with $f\equiv 0$), where
$$
   1<p<q\leq p+\alpha, \quad 0\le \mathfrak{a} \in C^{0,\alpha}(\Omega, [0, \infty)), \quad 0<\alpha \le 1.
$$
In such a context, they prove for local minimizers $u \in W^{1,p}(\Omega)\cap L^{\infty}(\Omega)$ the regularity of the gradient $Du \in C^{0, \beta}_{\text{loc}}(\Omega)$ for a universal constant $\beta \in (0,1)$, depending only on $N,p,q$  and $\alpha$, see \cite[Theorem 1.1]{CM15} and \cite[Theorem 1.2]{CM15II}. Additionally, elliptic type models as \eqref{DPF} and related variational obstacle problems, are treated under optimal assumptions on the modulating coefficient $\mathfrak{a}(\cdot)$, on the source term $f(\cdot)$ and obstacle in the recent paper \cite{DeFM20}, where the authors address optimal Lipschitz bounds for minimizers. Finally, we must also quote the fundamental works \cite{BCM15III}, \cite{DeFM19I}, \cite{DeFM19II}, \cite{DeFO} and \cite{LD18}, which addressed existence, regularity and multiplicity results for certain homogeneous/inhomogeneous double phase type problems like \eqref{DPF}.

Regarding non-variational scenario, i.e., fully nonlinear models such as \eqref{1.1}, their regularity properties have been the object of intense investigation over the last decade due to their proper connection to several issues arising in pure mathematics, as well as a variety of geometric and free boundary problems (see \textit{e.g.} \cite{ART15}, \cite{ART17}, \cite{BD1}, \cite{BD2}, \cite{BD3}, \cite{BDL19}, \cite{daSLR20}, \cite{DSVI}, \cite{DSVII}, \cite{DeF20} and \cite{IS} for an extensive but incomplete list of the latest contributions).

In turn, the simplest model with single degeneracy structure is given by
$$
   \mathcal{G}(x, Du, D^2 u) \defeq   |Du|^pF(x, D^2 u) \quad \text{with} \quad 0< p< \infty.
$$
At this point, its regularity properties (see \textit{e.g} \cite[Theorem 3.1]{ART15}, \cite[Theorem 1.1]{BD1}, \cite[Theorem 1.1]{BD2}, \cite[Theorem 1.1]{BDL19} and \cite[Theorem 1]{IS}) are intrinsically connected to three important aspects of the model equation:

\begin{enumerate}
  \item[(1)] {\bf Estimate to homogeneous problem with ``frozen coefficients''};
       $$
       \begin{array}{ccc}
          &  & \mathfrak{h} \in C_{\text{loc}}^{1, \alpha_{\mathrm{F}}}(\Omega) \\
         F(x_0, D^2 \mathfrak{h}) = 0 \,\, \text{in} \,\, \Omega & \stackrel[]{\text{A priori estimate}}{\Longrightarrow}  & \text{and} \\
          &  & \|\mathfrak{h}\|_{C^{1, \alpha_{\mathrm{F}}}(\Omega^{\prime})} \leq C(N, \lambda, \Lambda
         ) \cdot \|\mathfrak{h}\|_{L^{\infty}(\Omega)}
       \end{array}
       $$
  \item[(2)] {\bf Degeneracy degree of the model}:
  $$
  |Du|^p\mathcal{M}_{\lambda, \Lambda}^{-}(D^2 u) \leq \mathcal{G}(x, Du, D^2 u) \le|Du|^p\mathcal{M}_{\lambda, \Lambda}^{+}(D^2 u).
  $$
  \begin{center}
  	\text{More $p-$degeneracy degree} $\Longrightarrow$  \text{Loss of regularity} (cf. \cite{BdaSR19} and \cite{daSRS19I})
  \end{center}
  \item[(3)] {\bf Integrability properties of the source term}:
     $$
      \mathcal{G}(x, Du, D^2 u) = f \in L^m(\Omega)\cap C^0(\Omega) \quad \text{for} \quad N< m\leq \infty.
     $$
     $$
     \text{Enough m-integrability} \,\,\,\Rightarrow \,\,\, \text{More regularity in a suitable functional space}.
     $$
{\small{
$$
\left\{
       \begin{array}{ccc}
\left\|\frac{\partial^2 u}{\partial x_i \partial x_j}\right\|_{L^{ m}(\Omega^{\prime})} & \stackrel[\leq]{\text{Calder\'{o}n-Zygmund's}}{{\small\text{estimate}\, (p\approx 0)}} &  C(\lambda, \Lambda, N, m) \cdot \left(\|u\|_{L^{m}(\Omega)} + \|f\|_{L^{m}(\Omega)}\right) \\
          & \text{or}  &  \\
         \left\|\frac{\partial u}{\partial x_i}\right\|_{C^{0, \alpha}(\Omega^{\prime})} & \stackrel[]{\text{Gradient estimate}}{\leq} &  C(\lambda, \Lambda, N, m, \alpha) \cdot \left(\|u\|_{L^{\infty}(\Omega)} + \|f\|^{\frac{1}{p+1}}_{L^{m}(\Omega)}\right)
       \end{array}
\right.
$$}}
\end{enumerate}

Therefore, after these important breakthroughs and taking into account the previous highlights, we have decided to develop a geometric analysis of the behavior of solutions of \eqref{1.1} at interior points, in order to obtain an optimal $C_{\text{loc}}^{1, \beta}$ regularity estimate and deliver some relevant applications.

It is worth mentioning that under appropriate structural conditions, the techniques used throughout this manuscript allow us to treat the case of more general elliptic variational problems as follows
$$
 \left(u_0+W_0^{1,p}(\Omega), L^m(\Omega)\right) \ni (w, f) \mapsto \min \int_{\Omega} \left(\mathcal{H}_0(x,\nabla w)-fw\right)dx,
$$
as long as the integrand has suitable double phase type structure:
$$
   L_1 \cdot \left(\frac{1}{p}|\xi|^p+\mathfrak{a}(x)\frac{1}{q}|\xi|^q\right)\leq \mathcal{H}_0(x,\xi)\leq L_2 \cdot \left(\frac{1}{p}|\xi|^p+\mathfrak{a}(x)\frac{1}{q}|\xi|^q\right),
$$
where $0<L_1 \le L_2< \infty$,   $1<p<q<\infty$ and $m \in (N, \infty]$, which make possible to access available existence/regularity results for weak solutions (see e.g., \cite{BCM15III}, \cite{CM15}, \cite{CM15II}, \cite{DeFM19I}, \cite{DeFM19II}, \cite{DeFM20}, \cite{DeFO} and \cite{LD18} for some pivotal references).

We also stress that our geometric approach is particularly refined and quite far-reaching in order to be employed in other classes of problems. Indeed, we believe that it can be also extended to fully nonlinear elliptic equations with non-homogeneous term of generalized Double Phase type, see \cite{ByOh20}, or of Infinity Multi-Phase type, cf. \cite[Section 2]{HaOk19} just to cite a few.

Finally, an extension of our results also holds to problems modelled by
$$
   \left(|Du|^p + \sum_{i=1}^{N} \mathfrak{a}_i(x)|Du|^{q_i}\right)F(x, D^2 u) = f(x, u) \quad \text{in} \quad \Omega,
$$
where $0\le\mathfrak{a}_i \in C^0(\Omega)$, $i \in \{1, \cdots, N\}$, and $0<p\leq q_1\leq \cdots\leq q_N< \infty$.

\subsection{Our strategy and main difficulties of the model problem}

The main idea behind the proof of Theorem \ref{main1} is to perform a geometric decay argument along those points around which the equation degenerates, i.e. where the gradient becomes very small, which represents a totally different approach in contrast with former ones \cite{ART15}, \cite{APR}, \cite{AR18} and \cite{DeF20} just to cite a few.

In the sequel, the purpose will be to make use of an $\mathfrak{F}-$harmonic approximation in a $C^1-$fashion (Lemma \ref{lem.flat}) to ensure that viscosity solutions are ``geometrically close'' to their tangent plane in a suitable manner, i.e.
{\small{
$$
C^1-\text{closeness}\stackrel[\Longrightarrow]{\text{Geometric estimate}}{}    \,\,\displaystyle\sup_{B_{\rho}(x_0)} \frac{\left|u(x)-u(x_0)-D u(x_0)\cdot (x-x_0)\right|}{\rho^{1+\beta}}\leq 1,
$$}}
thereby getting a geometric estimate, in effect, the first step in an iteration process, see Corollary \ref{c3.1} for further details. Moreover, different from second order operators with linear first order terms, i.e. $F(x, Du, D^2 u) = f(x)$ with
$$
\begin{array}{rcl}
    \mathcal{M}_{\lambda, \Lambda}^-(\mathrm{X}-\mathrm{Y})-L|\xi-\varsigma| & \leq & F(x, \xi, \mathrm{X})-F(x, \varsigma, \mathrm{Y}) \\
   & \leq & \mathcal{M}_{\lambda, \Lambda}^+(\mathrm{X}-\mathrm{Y})+L|\xi-\varsigma|,
\end{array}
$$
one complication we are dealing with is that we can no longer proceed with an iterative scheme as the ones used in \cite[Theorem 1.1]{daS19} or \cite[Section 5]{DT17}, i.e.
$$
 \displaystyle\sup_{B_{\rho^k}(x_0)} \frac{\left|u(x)-\mathfrak{l}_k(x)\right|}{\rho^{k(1+\beta)}}\leq 1 \quad \stackrel[\Longrightarrow]{\text{Dini-Campanato}}{{\small\text{embedding}}} \,\,\,u \,\, \text{is} \,\,\,C^{1, \beta} \quad \text{at}\,\,\,x_0,
$$
because \textit{a priori} we do not know the equation which is satisfied by
$$
    B_1(0) \ni x \mapsto \frac{(u-\mathfrak{l}_k)(\rho^kx)}{\rho^{k(1+\beta)}}, \quad \text{for}\quad \{\mathfrak{l}_k\}_{k\in\mathbb{N}}\quad \text{a sequence of affine functions},
$$
since $v \mapsto  \mathcal{H}(x, D v)F(x, D^2 v)$ is not invariant by affine mappings. Nevertheless, it provides  quantitative information on the oscillation of $u$:
$$
\displaystyle \sup_{B_{\rho}(x_0)}\frac{\rho^{-1}\left|u(x)-u(x_0)\right|}{\rho^{\beta}+|D u(x_0)|}\leq1   \stackrel[\Longrightarrow]{\text{Iteration}}{} \sup_{B_{\rho^k}(x_0)}\frac{\rho^{-k}\left|u(x)-u(x_0)\right|}{\rho^{k\beta}+\frac{|D u (x_0)|(1-\rho^{(k-1)\beta})}{1-\rho^{\beta}}}\leq 1,
$$
which proves to be the proper estimate for continuing with an iterative process, provided we get a sort of suitable control under the magnitude of the gradient (point-wisely) (see, Lemmas \ref{lem.dy} and \ref{l3.3} for more details).

In conclusion, our manuscript is organized as follows: in Section \ref{sec.ALemmas} we show that solutions of \eqref{1.1} can be approximated by $\mathfrak{F}$-harmonic functions in a suitable $C^1-$fashion. In Section \ref{sec.ProofMRes} we prove the main results of this paper (Theorems \ref{main1}, \ref{main3} and Corollaries \ref{Cormain1} and \ref{Cormain2}) by obtaining sharp regularity estimates at interior points. Section \ref{sec.Applic} establishes connections with some nonlinear geometric free boundary problems and beyond. Section \ref{Sec.FurtApplic} employs some interesting applications in the study of some nonlinear problems in the theory of elliptic PDEs. Finally, Section \ref{Examples} is devoted to presenting a number of settings where our results are explicit.

\section{Auxiliary Lemmatas}\label{sec.ALemmas}

We start this section by presenting some known results that will be used later on. The first one is a kind of ``Cutting Lemma'', which strongly relies on \cite[Lemma 6]{IS} and it is concerned with the homogeneous doubly degenerate problem. We refer the reader to \cite[Lemma 4.1]{DeF20}, which, on the one hand, is not the precise statement of such a result, but on the other hand, it can be inferred
from \cite[Lemma 4.1]{DeF20} (see also \cite[Lemma 6]{IS}) by a careful inspection of the proof.

\begin{lemma}[{\bf Cutting Lemma}]\label{cutting}
Let $F$ be an operator satisfying (A0)-(A2) and $u$ be a viscosity solution of
\[
\mathcal{H}(x, Du)F(x, D^2u)=0 \quad \textrm{ in }\quad B_1(0).
\]
Then $u$ is viscosity solution of
\[
   F(x,D^2u)=0 \quad \textrm{ in }\quad B_1(0).
\]
\end{lemma}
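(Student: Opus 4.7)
The plan is to show that $u$ is simultaneously a viscosity sub- and super-solution of $F(x, D^2 u) = 0$; by the obvious symmetry, I will focus on the subsolution side. Given a test function $\varphi \in C^2$ such that $u - \varphi$ attains a local maximum at $x_0 \in B_1(0)$, the goal is to deduce $F(x_0, D^2 \varphi(x_0)) \geq 0$, and the argument splits naturally into two cases depending on whether $D\varphi(x_0)$ vanishes.

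In the non-degenerate regime $D\varphi(x_0) \neq 0$, the conclusion is almost algebraic. The structural lower bound \eqref{1.2} together with \eqref{1.3} and $L_1 > 0$ yields
\[
\mathcal{H}(x_0, D\varphi(x_0)) \geq L_1 \bigl(|D\varphi(x_0)|^p + \mathfrak{a}(x_0)|D\varphi(x_0)|^q\bigr) > 0,
\]
so the viscosity subsolution inequality $\mathcal{H}(x_0, D\varphi(x_0))\, F(x_0, D^2\varphi(x_0)) \geq 0$ for the original equation can be divided through by the strictly positive factor $\mathcal{H}(x_0, D\varphi(x_0))$ to conclude $F(x_0, D^2\varphi(x_0)) \geq 0$.

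The subtle case is $D\varphi(x_0) = 0$, in which the original viscosity condition collapses to the trivial inequality $0 \geq 0$ and carries no information on $F$. First I would reduce to a strict local maximum by adding $|x - x_0|^4$ to $\varphi$, which does not affect the $2$-jet at $x_0$. Next, for a small parameter $\epsilon > 0$ and a unit vector $v$, I would consider the linear perturbation
\[
\varphi_{\epsilon, v}(x) \defeq \varphi(x) + \epsilon\, v \cdot (x - x_0),
\]
so that $D\varphi_{\epsilon, v}(x_0) = \epsilon v \neq 0$. By the strictness of the maximum, $u - \varphi_{\epsilon, v}$ attains a local maximum at some $x_{\epsilon, v}$ with $x_{\epsilon, v} \to x_0$ as $\epsilon \to 0^+$. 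Feeding $\varphi_{\epsilon, v}$ into the viscosity subsolution property at $x_{\epsilon, v}$ and reusing the non-degenerate case would give $F(x_{\epsilon, v}, D^2\varphi(x_{\epsilon, v})) \geq 0$; passing to the limit $\epsilon \to 0$ using (A0)--(A2) and the continuity of $D^2\varphi$ would then deliver $F(x_0, D^2\varphi(x_0)) \geq 0$.

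The main obstacle will be to rule out the degenerate configuration $D\varphi(x_{\epsilon, v}) + \epsilon v = 0$ at the perturbed contact point, which would render the viscosity inequality vacuous again. To handle this I would invoke a Jensen/Aleksandrov-type perturbation lemma: letting $p$ range over a small ball $B_\delta(0) \subset \R^N$ in place of $\epsilon v$, for almost every $p$ the function $u - \varphi - p \cdot (x - x_0)$ attains its local maximum at a point $x_p$ where $u$ is twice differentiable in the Aleksandrov sense and $Du(x_p) = D\varphi(x_p) + p$. The bad set $\{p \in B_\delta(0): Du(x_p) = 0\}$ is contained in the image of the $C^1$ map $-D\varphi$ restricted to a small neighborhood $V_\delta$ of $x_0$ shrinking with $\delta$; since $D\varphi(x_0) = 0$ and $D\varphi$ is Lipschitz, a quantitative estimate shows that the relative measure of this bad set within $B_\delta(0)$ tends to zero, hence admissible parameters always exist. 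This is exactly the careful inspection of the arguments in \cite[Lemma 6]{IS} and \cite[Lemma 4.1]{DeF20} alluded to in the statement, and an entirely parallel argument covers the supersolution side.
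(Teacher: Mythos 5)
The paper does not supply a proof of Lemma \ref{cutting}; it delegates to \cite[Lemma 6]{IS} and \cite[Lemma 4.1]{DeF20}, so your attempt has to be judged on its own merits. Your overall architecture — the non-degenerate case by direct division, the degenerate case $D\varphi(x_0)=0$ by a linear perturbation of the test function with a measure-theoretic bad-set argument — is the right family of ideas. However, the two load-bearing steps in the degenerate case contain genuine gaps.

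First, the claim that \emph{the relative measure of the bad set within $B_\delta(0)$ tends to zero} does not follow from the stated containment. The contact points $x_p$ for $|p|<\delta$ are only confined to a neighbourhood $V_\delta$ whose radius is controlled by the \emph{rate of strictness} of the maximum: after adding $|x-x_0|^4$, the best one can say is $u(x)-\varphi(x)-|x-x_0|^4 \le -|x-x_0|^4$, which forces the contact point to satisfy $|x_p-x_0|\lesssim |p|^{1/3}$, so $V_\delta\sim B_{\delta^{1/3}}(x_0)$. Even with the Lipschitz bound $|D\varphi(x)|\le L|x-x_0|$, the image $-D\varphi(V_\delta)$ is only contained in $B_{L\delta^{1/3}}(0)$, and $|B_{L\delta^{1/3}}|/|B_\delta|\sim \delta^{-2N/3}\to\infty$. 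So the inclusion does not control the bad set at all; it could a priori exhaust $B_\delta(0)$. (Strictifying with $\varepsilon|x-x_0|^2$ instead gives $V_\delta\sim B_{\delta/\varepsilon}(x_0)$, still only linear in $\delta$, again not $o(\delta)$ unless one lets $\varepsilon\to\infty$, which destroys the $2$-jet.) Second, the Jensen/Aleksandrov step you invoke — that for a.e.\ slope $p$ the contact point is a point of twice differentiability of $u$ with $Du(x_p)=D\varphi(x_p)+p$ — requires $u$ to be semiconvex (resp.\ semiconcave). A continuous viscosity subsolution is not semiconvex in general, so one must first replace $u$ by a sup-convolution $u^\varepsilon$, verify the subsolution property survives in a slightly smaller ball, apply Jensen there, and finally pass $\varepsilon\to 0$; this regularisation layer is absent.

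The argument can be repaired by adding the missing case rather than by a measure estimate. If the bad set (slopes $p$ with $Du(x_p)=0$) is Lebesgue-null, choose a good $p\to 0$ and proceed as you wrote. If the bad set has positive measure, then the corresponding contact points form a positive-measure subset $E\subset V_\delta$ on which $Du\equiv 0$; at a.e.\ Aleksandrov point of $E$ that is also a density point of $E$ one deduces $D^2u(x_p)=0$ as well, and then ellipticity (A1) and the normalisation $F(\cdot, O_{N\times N})=0$ in (A0) give
\[
F\bigl(x_p, D^2\tilde\varphi(x_p)\bigr)\ \ge\ F\bigl(x_p, D^2u(x_p)\bigr)\ =\ F(x_p, 0)\ =\ 0,
\]
after which $p\to 0$ and continuity of $F$ and $D^2\varphi$ close the argument. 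Alternatively, the argument in \cite{IS} exploits the sign structure of $D^2\varphi(x_0)$ (if it is $\ge 0$ then (A0)--(A1) give $F(x_0,D^2\varphi(x_0))\ge 0$ outright; otherwise perturb along a negative eigendirection). Either repair replaces the false measure bound; as written, your proof does not go through.
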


An essential tool we will use is the fundamental gradient estimate from \cite[Theorem 1]{DeF20}, which we will state below for completeness.

\begin{theorem}[{\bf Gradient estimates}]\label{GradThm} Let $F$ be an operator satisfying (A0)-(A2) and let $u$ be a bounded viscosity solution to
$$
 \left[|Du|^p +\mathfrak{a}(x)|Du|^q\right] F(D^2u) = f \in L^{\infty}(B_1(0)).
$$
Then,
\begin{equation*}
\|u\|_{C^{1,\gamma}\left(B_{\frac{1}{2}(0)}\right)}\leq C\cdot \left(\|u\|_{L^{\infty}(B_1(0))} +1+ \|f\|_{L^{\infty}(B_1(0))}^{\frac{1}{p+1}}\right)
\end{equation*}
for universal constants $\gamma \in (0, 1)$ and $C>0$.
\end{theorem}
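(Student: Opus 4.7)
The plan is to run the geometric tangential method: normalize the problem by scaling, establish a one-step approximation by an $\mathfrak{F}$-harmonic function, upgrade the approximation to a quantitative decay of the deviation of $u$ from its tangent plane, and finally iterate on dyadic balls.

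\textbf{Normalization and approximation.} After replacing $u$ by $u/K$ with $K \asymp \|u\|_{L^\infty(B_1(0))}+1+\|f\|_{L^\infty(B_1(0))}^{1/(p+1)}$ and rescaling space, I would reduce to solutions with $\|u\|_{L^\infty(B_1(0))}\le 1$ and a new source term of arbitrarily small $L^\infty$ norm; the exponent $\frac{1}{p+1}$ is forced by the natural scaling of the dominant degenerate term $|Du|^p F(\cdot)$, since the contribution $\mathfrak{a}(x)|Du|^q$ only helps ($q\ge p$, $\mathfrak{a}\ge 0$). Next I would prove a $C^0$-approximation lemma: for every $\delta>0$ there exists $\varepsilon_0=\varepsilon_0(\delta)>0$ such that if $\|f\|_{L^\infty}\le \varepsilon_0$ then $u$ is $\delta$-close in $L^\infty$ to some $\mathfrak{h}$ satisfying $F(x_0,D^2\mathfrak{h})=0$ in a slightly smaller ball. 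The argument is by compactness: uniform interior H\"older estimates for a contradicting sequence $(u_k, f_k)$ (which are available from \cite{IS}) extract a limit $u_\infty$; stability of viscosity (sub/super)solutions together with assumption (A2) freezes coefficients; and crucially Lemma \ref{cutting} promotes the limiting equation $\mathcal{H}(x,Du_\infty)F(x,D^2u_\infty)=0$ to the genuinely uniformly elliptic $F(x_0, D^2 u_\infty)=0$.

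\textbf{Geometric decay and the dichotomy.} The $\mathfrak{F}$-harmonic tangent $\mathfrak{h}$ is $C^{1,\alpha_{\mathrm F}}$ with universal estimates (\cite{C89, CC95, Tru88}). Setting $\ell(x)=\mathfrak{h}(0)+D\mathfrak{h}(0)\cdot x$, a triangle inequality gives
$$
\|u-\ell\|_{L^\infty(B_\rho)}\le \delta + C\rho^{1+\alpha_{\mathrm F}}.
$$
Fix any $\gamma<\min\{\alpha_{\mathrm F}, \frac{1}{p+1}\}$. Choosing $\rho$ so that $C\rho^{1+\alpha_{\mathrm F}}\le \tfrac12\rho^{1+\gamma}$ and then $\delta\le\tfrac12\rho^{1+\gamma}$ yields the one-step decay $\|u-\ell\|_{L^\infty(B_\rho)}\le \rho^{1+\gamma}$. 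To globalize this at a reference point $x_0$, I split into two complementary regimes: if $|Du(x_0)|\ge L_0$ for a universal threshold $L_0>0$, then $|Du|\gtrsim L_0$ in a neighborhood of $x_0$, and dividing through by $\mathcal{H}(x,Du)$ turns the equation into a uniformly elliptic fully nonlinear equation with $L^\infty$ right-hand side, to which classical $C^{1,\alpha_{\mathrm F}}$ estimates apply. If instead $|Du(x_0)|$ is small the approximation step is iterated on dyadic balls centered at $x_0$.

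\textbf{Main difficulty and its resolution.} The obstruction, explicitly flagged in the introduction, is that the operator $v\mapsto \mathcal{H}(x,Dv)F(x,D^2 v)$ is not affine-invariant: subtracting the tangent plane $\ell$ modifies $|Dv|$, so the naive rescaling $v(y)=\rho^{-(1+\gamma)}(u(\rho y)-\ell(\rho y))$ does not satisfy an equation of the same class with a tamer right-hand side, and the standard Campanato iteration from the uniformly elliptic theory breaks. The cure is to iterate a \emph{gradient-twisted oscillation} of the form
$$
\frac{\rho^{-1}\|u-u(x_0)\|_{L^\infty(B_\rho(x_0))}}{\rho^\gamma + |Du(x_0)|}\le 1,
$$
rather than the pure affine deviation. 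Under the rescaling this quantity is compatible with the degeneracy factor $\mathcal{K}_{p,q,\mathfrak{a}}(x,|Dv|)$: the $|Du(x_0)|$ contribution absorbs the affine-shifted gradient, while the $\rho^\gamma$ part absorbs into the rescaled right-hand side precisely when $\gamma\le \frac{1}{p+1}$. A geometric summation of the resulting dyadic decay, combined with the complementary uniformly elliptic estimate in the large-gradient regime, produces the pointwise $C^{1,\gamma}$ bound at every $x_0\in B_{1/2}(0)$ with the claimed constant; undoing the normalization restores the explicit dependence on $\|u\|_{L^\infty(B_1(0))}$ and $\|f\|_{L^\infty(B_1(0))}^{1/(p+1)}$. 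I expect the combinatorics of this twisted iteration, carried out in the paper through Lemmas \ref{lem.dy} and \ref{l3.3}, to be the technical heart of the proof.
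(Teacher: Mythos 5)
The paper does not prove this statement: Theorem~\ref{GradThm} is imported verbatim from \cite[Theorem 1]{DeF20}, stated ``for completeness'', and thereafter used as a black box. In particular it is invoked inside the proof of Lemma~\ref{lem.flat} (to upgrade the $L^\infty$-convergence of the compactness sequence to $C^1$-convergence) and it underlies the iteration in Lemmas~\ref{lem.dy}--\ref{l3.3}, which are carried out with $Du(x_0)$ already known to exist. You propose to prove Theorem~\ref{GradThm} by running that very machinery; besides being circular in spirit, this misattributes Lemmas~\ref{lem.dy} and~\ref{l3.3} (which belong to the proof of the paper's sharper Theorem~\ref{main1}, with Theorem~\ref{GradThm} as input) to the proof of the result they take for granted.

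The concrete gap in your own sketch is at the iteration stage. Both your large/small-gradient dichotomy and the gradient-twisted quantity
$$
\frac{\rho^{-1}\|u-u(x_0)\|_{L^\infty(B_\rho(x_0))}}{\rho^\gamma + |Du(x_0)|}\le 1
$$
presuppose that $Du(x_0)$ exists, which is exactly what Theorem~\ref{GradThm} is supposed to establish. Your $C^0$-approximation lemma only hands you the tangent plane $\ell$ of the approximating $\mathfrak{F}$-harmonic $\mathfrak{h}$, not a tangent plane of $u$; to close a from-scratch argument you must iterate genuine affine profiles $\mathfrak{l}_k(x)=a_k+b_k\cdot x$ and show that the slope sequence $(b_k)$ is Cauchy, identifying its limit with $Du(x_0)$ only a posteriori. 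That is precisely the ``deviation from planes'' scheme which the paper's introduction attributes to \cite{ART15}, \cite{APR}, \cite{DeF20} and \cite{IS} and from which it explicitly distinguishes its own method. You dismiss that scheme on non-affine-invariance grounds, but a corrector-type rescaling adapted to the degenerate factor (the argument of \cite{IS} and \cite{DeF20}) is exactly what makes it go through and constitutes the actual proof of the statement under review.
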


In the sequel, we define an appropriate class of solutions to our problem:

\begin{definition}
  \label{FineClass} Let $F$ be a fully nonlinear operator satisfying (A0)-(A2), For $\mathfrak{a} \in C^0(\Omega, [0, \infty))$ and $f\in L^\infty(B_1(0)\times \R)$, we say that $u \in \mathcal{J}(F, \mathfrak{a}, f)(B_1(0))$ if
\begin{itemize}
  \item $\mathcal{H}(x, Du)F(x, D^2 u) = f(x, u)$ in $B_1(0)$ in the viscosity sense.
  \item  $ \|u\|_{L^{\infty}(B_1(0))}\leq 1$ in $B_1(0)$.
\end{itemize}
\end{definition}

Before presenting our main Key Lemma, the following simple stability result will be instrumental in the proof of Lemma \ref{lem.flat}:

\begin{lemma}\label{lem.stab} Let $\{F_k(x,X)\}_{k\in \mathbb{N}}$ be a sequence of operators satisfying (A0)-(A2) with the same ellipticity constants and same modulus of continuity in $B_1(0)$. Then there exists an elliptic operator $F_0$ which still satisfies (A0)-(A2) such that
\[
   F_k\longrightarrow F_0 \quad \mbox{uniformly on compact subsets of} \quad \text{Sym}(N)\times B_1(0).
\]
\end{lemma}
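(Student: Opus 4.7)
The conclusion is essentially an Arzelà--Ascoli argument: conditions (A0)--(A2) encode exactly the uniform boundedness and joint equicontinuity required to extract a convergent (sub)sequence, and each defining inequality passes to the limit. The plan is therefore to (i) verify the two Arzelà--Ascoli hypotheses on any compact set $K\Subset \mathrm{Sym}(N)\times B_1(0)$, (ii) extract a diagonal subsequence converging locally uniformly to a candidate $F_0$, and (iii) pass to the limit in the structural inequalities to check (A0)--(A2) for $F_0$. I will comment at the end on the mild abuse of language in writing ``$F_k\to F_0$'' rather than a subsequence.

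\textbf{Uniform bound.} Fix $R>0$ and restrict attention to $\{\|X\|\le R\}\times \overline{B_{1-\delta}(0)}$. Combining the normalization $F_k(x,O_{N\times N})=0$ from (A0) with the ellipticity bound (A1) applied with $\mathrm{Y}=O_{N\times N}$ gives
$$
\mathcal{M}^{-}_{\lambda,\Lambda}(X)\;\le\; F_k(x,X)\;\le\; \mathcal{M}^{+}_{\lambda,\Lambda}(X),
$$
so $|F_k(x,X)|\le C(N,\lambda,\Lambda)\|X\|\le C(N,\lambda,\Lambda)R$ uniformly in $k$ and $x$.

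\textbf{Equicontinuity.} Uniform ellipticity (A1), used twice with the roles of $X,Y$ reversed, yields the uniform Lipschitz bound
$$
|F_k(x,X)-F_k(x,Y)|\;\le\; C(N,\lambda,\Lambda)\,\|X-Y\|,
$$
independently of $k$ and $x$. The $\omega$-continuity assumption (A2), imposed with a common constant $\mathrm{C}_{\mathrm{F}}$ and common modulus $\omega$, gives for the same $X$
$$
|F_k(x,X)-F_k(y,X)|\;\le\; \mathrm{C}_{\mathrm{F}}\,\omega(|x-y|)\,\|X\|.
$$
The triangle inequality then provides joint equicontinuity on any compact $K\Subset \mathrm{Sym}(N)\times B_1(0)$ with a modulus independent of $k$.

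\textbf{Passage to the limit.} Arzelà--Ascoli, applied on an exhausting sequence of compacts together with a diagonal extraction, produces a subsequence $\{F_{k_j}\}$ and a continuous function $F_0:B_1(0)\times \mathrm{Sym}(N)\to \R$ with $F_{k_j}\to F_0$ uniformly on compact subsets. It remains to check (A0)--(A2) for $F_0$: the normalization $F_0(x,O_{N\times N})=\lim_j F_{k_j}(x,O_{N\times N})=0$ is immediate, while the Pucci sandwich in (A1) and the oscillation bound in (A2) are stable under pointwise (hence uniform) limits, yielding the same ellipticity constants $\lambda,\Lambda$ and the same constant $\mathrm{C}_{\mathrm{F}}$ and modulus $\omega$ for $F_0$. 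The continuity in $X$ required by (A0) follows either from uniform convergence or, independently, from the Lipschitz bound derived above inherited by $F_0$.

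\textbf{Main subtlety.} The only point requiring comment is that one genuinely extracts a \emph{subsequence}; the stated formulation ``$F_k\to F_0$'' should be read in the standard compactness sense. This is harmless for the use of the lemma in Lemma~\ref{lem.flat}, where the conclusion is invoked after already passing to a subsequence in the contradiction argument.
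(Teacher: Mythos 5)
The paper states Lemma~\ref{lem.stab} without proof, calling it a ``simple stability result,'' so there is no author-written argument to compare against; your Arzel\`a--Ascoli argument is the standard and (up to the caveat below) correct way to fill that gap. The uniform bound from (A0)--(A1), the uniform Lipschitz bound in $X$ from (A1), the $x$-modulus from (A2), the diagonal extraction, and the stability of the defining inequalities under locally uniform limits are all exactly right and complete.

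You correctly flag the one real issue, which is not a gap in your proof but in the \emph{statement}: with a bare Arzel\`a--Ascoli argument one only obtains a \emph{subsequence}, and the lemma as literally written (``$F_k\to F_0$'' for the full sequence) is false --- alternating $F_k=\mathcal{M}^+_{\lambda,\Lambda}$ for $k$ even and $F_k=\mathcal{M}^-_{\lambda,\Lambda}$ for $k$ odd satisfies all hypotheses with fixed constants yet does not converge. This is harmless here because the lemma is only invoked inside the contradiction argument of Lemma~\ref{lem.flat}, where subsequences have already been repeatedly extracted, so the intended reading is clearly precompactness rather than convergence of the whole sequence. Your parenthetical remark making this explicit is exactly the right thing to record. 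One very small optional refinement: since (A1) already yields a uniform Lipschitz modulus in $X$, one can run Arzel\`a--Ascoli purely in the $x$-variable on the countable dense set of rational matrices and avoid the two-variable equicontinuity discussion, but the joint-equicontinuity route you took is equally standard and no longer.
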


The first key step towards the proof of Theorem \ref{main1} is to show that the solutions to \eqref{1.1}, in inner domains, can be approximated in a suitable manner by $\mathfrak{F}-$harmonic profiles, and that during such a process certain regularity properties of solutions are preserved.

At this point we are in a position to prove the following Key Lemma (cf. \cite[Lemma 5.1]{ART15}, \cite[Lemma 2.6]{DT17} and \cite[Lemma 4.1]{DeF20}):

\begin{lemma}[{\bf Approximation by $\mathfrak{F}-$harmonic solutions}]\label{lem.flat}
Let $\mathfrak{h} \in C^{0}\left(\overline{B_{\frac{1}{2}}(0)}\right)$ be the unique viscosity solution to
$$
\left\{
\begin{array}{rcrcl}
  F(D^2 \mathfrak{h}) & = & 0 & \text{in} & B_{\frac{1}{2}}(0) \\
  \mathfrak{h} & = & u & \text{on} & \partial B_{\frac{1}{2}}(0),
\end{array}
\right.
$$
Then, given $0<\iota<1$, there exists a $\delta= \delta(\iota, N, \lambda, \Lambda, p, q)>0$ such that if $u \in \mathcal{J}(F, \mathfrak{a}, f)(B_{1}(0))$ with
$$
    \max\left\{\Theta_{\mathrm{F}}(x), \left\|f\right\|_{L^{\infty}(B_{1}(0)\times \R)}\right\} \leq \delta
$$
then
\begin{equation}\label{eq.flat}
     \max\left\{\|u_k-\mathfrak{h}\|_{L^{\infty}\left(B_{\frac{1}{2}}(0)\right)}, \|Du_k-D\mathfrak{h}\|_{L^{\infty}\left(B_{\frac{1}{2}}(0)\right)}\right\} \leq \iota.
\end{equation}
\end{lemma}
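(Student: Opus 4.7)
The plan is to argue by contradiction, leveraging a compactness–stability scheme. Suppose the claim fails; then there exist $\iota_{0}\in (0,1)$ and sequences $\{F_{k}\}$, $\{\mathfrak{a}_{k}\}$, $\{f_{k}\}$ and $u_{k}\in \mathcal{J}(F_{k},\mathfrak{a}_{k},f_{k})(B_{1}(0))$, each satisfying the structural hypotheses (A0)-(A2), \eqref{1.2} and \eqref{1.3} with uniform constants, together with
$$
\max\bigl\{\Theta_{F_{k}}(x),\,\|f_{k}\|_{L^{\infty}(B_{1}(0)\times\mathbb{R})}\bigr\}\leq \tfrac{1}{k},
$$
but such that the associated $F_{k}$-harmonic replacement $\mathfrak{h}_{k}\in C^{0}(\overline{B_{1/2}(0)})$ solving $F_{k}(D^{2}\mathfrak{h}_{k})=0$ in $B_{1/2}(0)$ with $\mathfrak{h}_{k}=u_{k}$ on $\partial B_{1/2}(0)$ fails to satisfy \eqref{eq.flat} with $\iota=\iota_{0}$ for every $k$.

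The next step is to upgrade the normalization $\|u_{k}\|_{L^{\infty}(B_{1}(0))}\leq 1$ to genuine $C^{1}$-compactness: applying the gradient bound in Theorem \ref{GradThm} to each $u_{k}$ yields a uniform estimate
$$
\|u_{k}\|_{C^{1,\gamma}(B_{3/4}(0))}\leq C\bigl(1+\|f_{k}\|_{L^{\infty}}^{1/(p+1)}\bigr)\leq 2C,
$$
with $\gamma\in (0,1)$ and $C>0$ universal. By Arzel\`a–Ascoli (and a standard diagonal argument) we pass, along a subsequence, to a limit $u_{\infty}\in C^{1}(\overline{B_{1/2}(0)})$ with $u_{k}\to u_{\infty}$ in $C^{1}$ on $\overline{B_{1/2}(0)}$. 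Simultaneously, Lemma \ref{lem.stab} produces an elliptic operator $F_{0}$ satisfying (A0)-(A1) with $F_{k}\to F_{0}$ locally uniformly on $\mathrm{Sym}(N)\times B_{1}(0)$; because $\Theta_{F_{k}}\to 0$, the limit is actually $x$-independent, $F_{0}(x,X)=F_{0}(X)$.

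Passing to the viscosity limit in $\mathcal{H}_{k}(x,Du_{k})F_{k}(x,D^{2}u_{k})=f_{k}(x,u_{k})$, the right-hand side vanishes uniformly and the stability of viscosity solutions under the $C^{1}$-convergence of $u_{k}$ and the locally uniform convergence of the coefficients gives $\mathcal{H}_{\infty}(x,Du_{\infty})F_{0}(D^{2}u_{\infty})=0$ in $B_{1/2}(0)$. Invoking the Cutting Lemma \ref{cutting} strips off the degenerate factor, leaving $F_{0}(D^{2}u_{\infty})=0$ in $B_{1/2}(0)$. An analogous stability analysis applied to the homogeneous sequence $\mathfrak{h}_{k}$: classical interior $C^{1,\alpha_{F}}$-estimates for fully nonlinear uniformly elliptic operators (cf.\ \cite{C89}, \cite{CC95}, \cite{Tru88}), combined with $u_{k}|_{\partial B_{1/2}}\to u_{\infty}|_{\partial B_{1/2}}$ uniformly and the comparison principle, yields $\mathfrak{h}_{k}\to \mathfrak{h}_{\infty}$ in $C^{1}(\overline{B_{1/2}(0)})$, where $\mathfrak{h}_{\infty}$ solves $F_{0}(D^{2}\mathfrak{h}_{\infty})=0$ in $B_{1/2}(0)$ with boundary values $u_{\infty}$. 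By uniqueness for this Dirichlet problem, $\mathfrak{h}_{\infty}\equiv u_{\infty}$ on $\overline{B_{1/2}(0)}$, whence
$$
\max\bigl\{\|u_{k}-\mathfrak{h}_{k}\|_{L^{\infty}(B_{1/2}(0))},\,\|Du_{k}-D\mathfrak{h}_{k}\|_{L^{\infty}(B_{1/2}(0))}\bigr\}\to 0,
$$
contradicting the failure of \eqref{eq.flat}.

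The main obstacle is ensuring genuine $C^{1}$-convergence of \emph{both} sequences $\{u_{k}\}$ and $\{\mathfrak{h}_{k}\}$, since \eqref{eq.flat} controls gradients and not merely values: this is the precise reason why Theorem \ref{GradThm} enters in a non-cosmetic way — it provides the uniform $C^{1,\gamma}$ bound needed to compactify $\{u_{k}\}$ in $C^{1}$, while the Cutting Lemma is what allows us to identify the limit equation as a \emph{uniformly elliptic} homogeneous one, which in turn is what enables the uniqueness step that closes the argument. A secondary technical point is verifying that the limit operator $F_{0}$ inherits (A0)-(A1) with the correct constants, which is supplied by Lemma \ref{lem.stab}.
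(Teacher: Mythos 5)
Your proposal is correct and follows essentially the same compactness--contradiction scheme as the paper: extract sequences, use Theorem~\ref{GradThm} for a uniform $C^{1,\gamma}$ bound on $u_k$ and interior estimates for $\mathfrak{h}_k$, pass to the limit via Lemma~\ref{lem.stab}, strip the degenerate factor with the Cutting Lemma~\ref{cutting}, and invoke uniqueness of the Dirichlet problem. The one place you are slightly more optimistic than warranted is in asserting that $\mathcal{H}_k\to\mathcal{H}_\infty$ locally uniformly (the modulating functions $\mathfrak{a}_k$ need not be uniformly bounded), but this is immaterial since the Cutting Lemma is applied precisely to bypass identifying the limiting degeneracy factor, and the paper glosses over the same point by citing the argument in \cite[Lemma 4.1]{DeF20}.
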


\begin{proof} The proof is based on a \textit{reductio ad absurdum} argument. For this end, suppose that the lemma does not hold. This means that for some $\iota_0\in (0, 1)$ we could find sequences $\{u_k\}_k$, $\{\mathfrak{h}_k\}_k$ $\{F_k\}_k$, $\{\mathfrak{a}_k\}_k$ and $\{f_k\}_k$ satisfying:
\begin{itemize}
\item $u_k\in \mathcal{J}(F_k, \mathfrak{a}_k, f_k)(B_{1}(0))$;
\item $\max\left\{\Theta_{\mathrm{F}_k}(x), \|f_k\|_{L^{\infty}(B_{1}(0)\times \R)}\right\} = \text{o}(1) \quad \text{when} \quad k\gg 1$;
\end{itemize}
and
$$
\left\{
\begin{array}{rclcl}
  F_k(D^2 \mathfrak{h}_k) & = & 0 & \text{in} & B_{\frac{1}{2}}(0) \\
  \mathfrak{h}_k & = & u_k & \text{on} & \partial B_{\frac{1}{2}}(0),
\end{array}
\right.
$$
in the viscosity sense. However,
  \begin{equation}\label{Eqcont}
     \max\left\{\|u_k-\mathfrak{h}_k\|_{L^{\infty}\left(B_{\frac{1}{2}}(0)\right)}, \|Du_k-D\mathfrak{h}_k\|_{L^{\infty}\left(B_{\frac{1}{2}}(0)\right)}\right\}  > \iota_0 \quad \forall\,\,\, k \in \mathbb{N}.
  \end{equation}

From Maximum Principle we obtain
$$
\|\mathfrak{h}_k\|_{L^{\infty}\left(B_{\frac{1}{2}}(0)\right)} \leq \|u_k\|_{L^{\infty}\left(\partial B_{\frac{1}{2}}(0)\right)} \leq 1
$$
By definition, we have
$$
       \mathcal{H}_k(x, Du_k)F_k(x, D^2 u_k) = f_k(x, u_k) \quad \text{and} \quad  \|u_k\|_{L^{\infty}\left(B_1(0)\right)}\leq1.
$$
Hence, by H\"{o}lder regularity of solutions (see,  \cite[Proposition 4.10]{CC95} and \cite[Theorem 2]{DeF20}), up to a subsequence, $\mathfrak{h}_k \to \mathfrak{h}_0$ and $u_k \to u_0$ uniformly in $\overline{B_{\frac{1}{2}}(0)}$. Furthermore, $\mathfrak{h}_{k} \to  \mathfrak{h}_0$ local uniformly in the $C^{1}-$topology (see \textit{e.g.} \cite{C89}, \cite[Section 5.3]{CC95} and \cite{Tru88}). Now, from \cite[Theorem 1]{DeF20} we can estimate:
{\small{
$$
  \|Du_k\|_{C^{1, \gamma}\left(B_{\frac{1}{2}}(0)\right)}, [u_k]_{C^{1, \gamma}\left(B_{\frac{1}{2}}(0)\right)} \leq C\cdot \left(\|u_k\|_{L^{\infty}(B_{1}(0))}+1+\|f_k\|^{\frac{1}{p+1}}_{L^{\infty}(B_{1}(0)\times \R)}\right)
$$}}
for some $\gamma \in (0, 1)$ and a universal constant $C>0$. Thus, up to a subsequence, $Du_k \to Du_0$ uniformly in $\overline{B_{\frac{1}{2}}(0)}$. In particular,  we conclude that
\begin{equation}\label{eq.cont}
\max\left\{\|u_0-\mathfrak{h}_0\|_{L^{\infty}\left(B_{\frac{1}{2}}(0)\right)}, \|Du_0-D\mathfrak{h}_0\|_{L^{\infty}\left(B_{\frac{1}{2}}(0)\right)}\right\}\geq \iota_0.
\end{equation}

On the other hand, from Lemma \ref{lem.stab}, there exists an elliptic operator $\mathfrak{F}_0$ satisfying (A0)-(A2) (with $\omega \equiv 0$) such that $F_k \to \mathfrak{F}_0$ locally uniformly in $\text{Sym}(N)$ for all $x \in B_1(0)$ fixed.

Now, by arguing as \cite[Lemma 4.1]{DeF20} and making use of stability results for viscosity solutions (cf. \cite[Corollary 2.7 and Remark 2.8]{BD2}), and from the ``Cutting Lemma'' \ref{cutting} we conclude that
\begin{equation}\label{EqDirProb}
  \left\{
\begin{array}{rclcl}
  \mathfrak{F}_0(D^2 \mathfrak{h}_0) & = & 0 & \text{in} & B_{\frac{1}{2}}(0) \\
  \mathfrak{h}_0 & = & u_0 & \text{on} & \partial B_{\frac{1}{2}}(0)\\
  \mathfrak{F}_0(D^2 u_0) & = & 0 & \text{in} & B_{\frac{1}{2}}(0).
\end{array}
\right.
\end{equation}
in the viscosity sense.

Finally, from the uniqueness of viscosity solutions to the Dirichlet problem \eqref{EqDirProb} we conclude that $u_0=\mathfrak{h}_0$ (see, \cite[Section 5.2]{CC95}), which clearly yields a contradiction with \eqref{eq.cont}. This concludes the proof.
\end{proof}

\begin{remark}[{\bf Smallness regime}]\label{SmallRegime} Let us argue on the scaling character of our problem which enables us to put the proof of Theorem \ref{main1} under the assumptions of Approximation Lemma \ref{lem.flat}. Let $u$ be a viscosity solution of \eqref{1.1}. Fix a point $x_0 \in \Omega^{\prime} \Subset \Omega$, we define $v: B_1(0) \to \R$ as follows
$$
    v(x) = \frac{u(\tau x+ x_0)}{\kappa}
$$
for parameters $\kappa, \tau>0$ to be determined later. It is easy to verify that $v$ fulfills (in the viscosity sense)
$$
\mathcal{H}_{\kappa, \tau}(x, D v) F_{\kappa, \tau}(x, D^2v) =  f_{\kappa, \tau}(x)  \textrm{ in } B_1(0),
$$
where
\[
\left\{
\begin{array}{rcl}
  F_{\kappa, \tau}(x, X) & \defeq & \frac{\tau^{2}}{\kappa}F\left(x_0+\tau x, \frac{\kappa}{\tau^{2}} X\right) \\
  f_{\kappa, \tau}(x, s) & \defeq & \frac{\tau^{p+2}}{\kappa^{p+1}}f(x_0+\tau x, \kappa s)\\
  \mathfrak{a}_{\kappa, \tau}(x) & \defeq & \left(\frac{\tau}{\kappa}\right)^{p-q}\mathfrak{a}(x_0+\tau x)\\
  \mathcal{H}_{\kappa, \tau}(x, \xi) &\defeq & \left(\frac{\tau}{\kappa}\right)^p\mathcal{H}\left(x_0+\tau x, \frac{\kappa}{\tau}\xi\right)\\
  \mathcal{K}_{p, q, \mathfrak{a}}^{\kappa, \tau}(x, |\xi|)&\defeq& |\xi|^p+\mathfrak{a}_{\kappa, \tau}(x)|\xi|^q.
\end{array}
\right.
\]
Hence, $F_{\kappa, \tau}$ fulfils the structural assumptions (A0) and (A1). Moreover,
$$
L_1 \cdot \mathcal{K}_{p, q, \mathfrak{a}}^{\kappa, \tau}(x, |\xi|)\leq  \mathcal{H}_{\kappa, \tau}(x, \xi) \leq L_2 \cdot \mathcal{K}_{p, q, \mathfrak{a}}^{\kappa, \tau}(x, |\xi|) \quad \text{for} \quad (x, \xi) \in \Omega \times \R^N.
$$
Now, for given $\iota \in (0, 1)$, which will be sufficiently small but fixed, let $\delta_{\iota}>0$ be the universal constant in the statement of Approximation Lemma \ref{lem.flat}. Then, we choose
$$
 \kappa \defeq \|u\|_{L^{\infty}(\Omega)} + 1 +\delta_{\iota}^{-1}\|f\|^{\frac{1}{p+1}}_{L^{\infty}(\Omega\times \R)}\\
$$
and
$$
 \tau = \min \left\{\frac{1}{2},\, \frac{1}{4}\dist(\Omega^{\prime},\, \partial \Omega), \left(\frac{\delta_{\iota}}{\|f\|_{L^{\infty}(\Omega \times \R)}+1}\right)^{\frac{1}{p+2}},\, \omega^{-1}\left(\frac{\delta_{\iota}}{\mathrm{C}_{\mathrm{F}}+1}\right)\right\}.
$$
Therefore, with such choices, $v$,  $F_{\kappa, \tau}$ and $f_{\kappa, \tau}$ fall into the framework of Approximation Lemma \ref{lem.flat}.
\end{remark}

The next Lemma establishes the first step of the geometric control on the growth of the gradient:

\begin{lemma}\label{lem.firststep}
Under the assumptions of Lemma \ref{lem.flat} there exists $\rho\in \left(0,\frac{1}{2}\right)$ such that
\begin{equation}\label{Aproxcond}
\displaystyle \sup_{B_{\rho}(0)} \frac{\left|u(x)-l_{0} u(x)\right|}{\rho^{1+\beta}}\leq 1.
\end{equation}\label{2.4}
where $\mathfrak{l}_{0} u(x) = u(0)+D u(0)\cdot x$.
\end{lemma}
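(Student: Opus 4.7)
The strategy is the classical two-step geometric comparison: first extract an $\mathfrak{F}$-harmonic tangent profile from Lemma \ref{lem.flat}, then exploit the sharp interior $C^{1,\alpha_{\mathrm{F}}}$ estimate for constant-coefficient fully nonlinear equations to compare $u$ to its own affine approximation at the origin.

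\textbf{Step 1: setting the scales.} Because $\beta$ satisfies \eqref{SharpExp}, in particular $\beta<\alpha_{\mathrm{F}}$, I would first fix $\rho\in(0,\tfrac12)$ small enough that
$$
  C_{0}\,\rho^{\,\alpha_{\mathrm{F}}-\beta}\;\leq\;\tfrac{1}{2},
$$
where $C_0=C_0(N,\lambda,\Lambda)$ is the constant in the interior $C^{1,\alpha_{\mathrm{F}}}$ estimate for solutions of $F(D^2\mathfrak{h})=0$ with $\|\mathfrak{h}\|_{L^{\infty}}\le 1$ (this is the frozen-coefficient estimate recalled around \eqref{SharpExp}; see \cite{C89,CC95,Tru88}). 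With $\rho$ fixed, I then choose $\iota>0$ so small that $3\iota+\iota\rho\le \tfrac{1}{2}\rho^{1+\beta}$, and let $\delta=\delta(\iota,N,\lambda,\Lambda,p,q)$ be the constant produced by the Approximation Lemma \ref{lem.flat}.

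\textbf{Step 2: approximation.} Under the hypotheses of Lemma \ref{lem.flat}, let $\mathfrak{h}\in C^{0}(\overline{B_{1/2}(0)})$ be the unique viscosity solution of the Dirichlet problem
$F(D^2\mathfrak{h})=0$ in $B_{1/2}(0)$ with $\mathfrak{h}=u$ on $\partial B_{1/2}(0)$. By the comparison principle $\|\mathfrak{h}\|_{L^\infty(B_{1/2})}\le 1$, so the frozen-coefficient gradient-H\"older estimate gives
$$
   \bigl|\mathfrak{h}(x)-\mathfrak{h}(0)-D\mathfrak{h}(0)\cdot x\bigr|\;\leq\;C_{0}\,|x|^{\,1+\alpha_{\mathrm{F}}} \qquad \text{for all}\ x\in B_{\rho}(0).
$$
On the other hand, Lemma \ref{lem.flat} (applied with the $\iota$ fixed above) yields
$\|u-\mathfrak{h}\|_{L^{\infty}(B_{1/2})}\le\iota$ and $\|Du-D\mathfrak{h}\|_{L^{\infty}(B_{1/2})}\le\iota$.

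\textbf{Step 3: triangle inequality.} For $x\in B_\rho(0)$, I decompose
$$
   u(x)-u(0)-Du(0)\cdot x \;=\; [u-\mathfrak{h}](x)-[u-\mathfrak{h}](0)+[\mathfrak{h}(x)-\mathfrak{h}(0)-D\mathfrak{h}(0)\cdot x]-[Du(0)-D\mathfrak{h}(0)]\cdot x,
$$
whence
$$
  \sup_{B_\rho(0)}|u(x)-\mathfrak{l}_{0}u(x)|\;\leq\; 2\iota+\iota\rho+C_{0}\,\rho^{\,1+\alpha_{\mathrm{F}}}
   \;\leq\;\tfrac{1}{2}\rho^{\,1+\beta}+\tfrac{1}{2}\rho^{\,1+\beta}\;=\;\rho^{\,1+\beta},
$$
which is \eqref{Aproxcond}.

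The proof is not technically delicate; the only point to keep track of is the order in which one fixes the parameters. One must first choose $\rho$ using only the gap $\alpha_{\mathrm{F}}-\beta>0$ (this is where the sharp exponent constraint $\beta<\alpha_{\mathrm{F}}$ in \eqref{SharpExp} is essential), then pick $\iota$ as a function of $\rho$, and finally obtain $\delta$ via Lemma \ref{lem.flat}. The second bound in \eqref{eq.flat}, namely the $C^1$-closeness $\|Du-D\mathfrak{h}\|_{\infty}\le\iota$, is what makes this approach work in spite of the non-invariance of the operator under addition of affine functions (the obstruction highlighted before Lemma \ref{cutting}): it lets us absorb the $(Du(0)-D\mathfrak{h}(0))\cdot x$ error directly rather than relying on an affine-iteration scheme.
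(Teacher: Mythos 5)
Your proof is correct and takes essentially the same approach as the paper: approximate $u$ by the $\mathfrak{F}$-harmonic function $\mathfrak{h}$ via Lemma~\ref{lem.flat}, apply the frozen-coefficient interior $C^{1,\alpha_{\mathrm{F}}}$ estimate to $\mathfrak{h}$, and close with a triangle inequality after choosing $\rho$ first (exploiting the gap $\alpha_{\mathrm{F}}-\beta>0$) and then $\iota$ accordingly. Your four-term decomposition is in fact slightly more explicit than the paper's displayed inequality (which omits the $|u(0)-\mathfrak{h}(0)|$ term, absorbing it silently), but the structure, the role of the $C^1$-closeness from \eqref{eq.flat}, and the ordering of parameter choices all match.
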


\begin{proof}
Let $0<\iota\ll 1$ to be chose \textit{a posteriori}. From Lemma \ref{lem.flat} we know that there exists $\delta_\iota>0$, such that whenever
\begin{equation}\label{EqSmallCond}
 \max\left\{\Theta_{\mathrm{F}}(x), \left\|f\right\|_{L^{\infty}(B_{1}(0)\times \R)}\right\}\leq\delta_{\iota},
\end{equation}
then \eqref{eq.flat} holds. For $\rho\in \left(0,\frac{1}{2}\right)$ to be fixed soon and $x\in B_{\rho}(0)$ we compute
\[
   \left|u(x)-\mathfrak{l}_{0} u(x)\right| \leq |u(x)-\mathfrak{h}(x)| + |\mathfrak{h}(x)-\mathfrak{l}_{0} \mathfrak{h}(x)|+ |(D\mathfrak{h}-D u)(0)\cdot x|
\]
so that
\[
\sup_{B_\rho(0)}\left|u(x)-\mathfrak{l}_{0} u(x)\right| \leq \sup_{B_\rho(0)}|\mathfrak{h}(x)-\mathfrak{l}_{0} \mathfrak{h}(x)|+ 2\iota
\]
provided that \eqref{EqSmallCond} there holds.

Now, according to the available regularity theory to homogeneous problem with ``frozen coefficients'' (see, \cite{C89}, \cite{CC95} and \cite{Tru88}) we have
$$
   \displaystyle |\mathfrak{h}(x)-\mathfrak{l}_0 \mathfrak{h}(x)|   \leq  C(N, \lambda, \Lambda)\cdot|x|^{1+\alpha_{\mathrm{F}}} \quad \forall\,\, x \in B_{\frac{1}{2}}(0),
$$
where $C>0$ and $\alpha_{\mathrm{F}} =  \alpha_{\mathrm{F}}(N, \lambda, \Lambda)\in (0, 1]$. Finally,
$$
\begin{array}{rcl}
\displaystyle \sup_{B_\rho(0)}\left|u(x)-\mathfrak{l}_{0} u(x)\right|&\le &  C(N, \lambda, \Lambda)\cdot\rho^{1+\alpha_{\mathrm{F}}}+2\iota\\
&\leq & \rho^{1+\beta}
\end{array}
   .
$$
as long as we make the following universal choices:
\begin{equation}\label{2.5}
  \rho \in \left(0, \min\left\{\frac{1}{2}, \,\left(\frac{3}{4C(N, \lambda, \Lambda)}\right)^{\frac{1}{\alpha_{\mathrm{F}}-\beta}}\right\}\right) \quad \text{and} \quad   \iota \in  \left(0, \frac{1}{8}\rho^{1+\beta}\right)
\end{equation}
Therefore, we obtain \eqref{Aproxcond}, thereby finishing the proof.
\end{proof}

As mentioned before, the previous lemma doesn't allow us to proceed with an iterative procedure. Thus, the following simple consequence will provide the correct estimate.
\begin{corollary}[{\bf $1^{st}$ step of induction}]\label{c3.1}
Suppose that the assumptions of Lemma \ref{lem.firststep} are in force. Then,
$$
\displaystyle \sup_{B_{\rho}(0)}\left|u(x)-u(0)\right|\leq\rho^{1+\beta}+\rho|D u(0)|,
$$
where $\rho$ satisfies \eqref{2.5}.
\end{corollary}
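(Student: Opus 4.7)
The plan is to derive this statement as an immediate consequence of Lemma \ref{lem.firststep} via the triangle inequality. Since Lemma \ref{lem.firststep} already gives the $C^1$-closeness of $u$ to its tangent affine function $\mathfrak{l}_0 u(x)=u(0)+Du(0)\cdot x$, we only need to separate the affine part from the pure oscillation part.

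More precisely, I would start from the estimate
\[
\sup_{B_\rho(0)} |u(x)-\mathfrak{l}_0 u(x)| \leq \rho^{1+\beta},
\]
which is exactly the content of Lemma \ref{lem.firststep} under the smallness regime described in Remark \ref{SmallRegime} and the choice of $\rho,\iota$ in \eqref{2.5}. Then for $x\in B_\rho(0)$ the triangle inequality gives
\[
|u(x)-u(0)| \leq |u(x)-\mathfrak{l}_0 u(x)| + |Du(0)\cdot x| \leq \rho^{1+\beta} + |Du(0)|\,|x|.
\]
Taking the supremum over $x\in B_\rho(0)$ and using $|x|\leq \rho$ yields the desired bound.

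There is essentially no obstacle here; the statement is a repackaging of Lemma \ref{lem.firststep} in which the linear contribution $Du(0)\cdot x$ is moved to the right-hand side. The only subtlety worth flagging is why this repackaging is useful: unlike the affine-deviation estimate of Lemma \ref{lem.firststep}, the bound $\rho^{1+\beta}+\rho|Du(0)|$ has the structure needed to later iterate at scales $\rho^k$, where the operator $\mathcal{H}(x,Du)F(x,D^2u)$ is not invariant under subtraction of affine functions but \emph{is} compatible with estimates controlling oscillations $u(x)-u(0)$ together with the magnitude of the gradient at a fixed point. This is the motivation behind the reformulation announced in the introduction, and it sets up the inductive scheme carried out in the forthcoming Lemmas \ref{lem.dy} and \ref{l3.3}.
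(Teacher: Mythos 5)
Your proof is correct and is precisely the intended argument: the paper states Corollary \ref{c3.1} without a written proof as a direct consequence of Lemma \ref{lem.firststep}, and the triangle inequality plus $|x|\leq\rho$ is the only step needed. Your remark about why the repackaging matters for the iteration (non-invariance of the operator under subtraction of affine maps) also matches the paper's own discussion in the introduction.
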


Next, we will obtain the precise control on the influence of the gradient of $u$, we will iterate solutions in suitable dyadic balls. The proof makes use some ideas from \cite[Theorem 3.1]{AdaSRT19}, \cite[ Theorems 1.1 and 1.3]{APR} and \cite[Theorem 1.2]{daS19} and references therein.

\begin{lemma}[{\bf $k^{th}$ step of induction}]\label{lem.dy} Under the assumptions of Lemma \ref{lem.firststep} one has
\begin{equation}\label{3.3}
\displaystyle\sup_{B_{\rho^k}(0)}\left|u(x)-u(0)\right|\leq\rho^{k(1+\beta)}+|D u (0)|\sum_{j=0}^{k-1}\rho^{k+j\beta},
\end{equation}
where $\rho$ satisfies \eqref{2.5}.
\end{lemma}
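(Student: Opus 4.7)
The plan is to argue by induction on $k \geq 1$. The base case $k = 1$ is exactly Corollary \ref{c3.1}. Assuming \eqref{3.3} at step $k$, set
$$
\mu_k := \rho^{k(1+\beta)} + |Du(0)|\sum_{j=0}^{k-1}\rho^{k+j\beta}
$$
and introduce the rescaled function
$$
v(y) := \frac{u(\rho^k y) - u(0)}{\mu_k}, \qquad y \in B_1(0),
$$
which, by the inductive hypothesis, satisfies $\|v\|_{L^{\infty}(B_1(0))} \leq 1$. The goal is to apply Corollary \ref{c3.1} to $v$ and unravel the scaling.

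A direct computation shows that $v$ solves, in the viscosity sense, an equation of the form $\tilde{\mathcal H}(y,Dv)\,\tilde F(y,D^2 v) = \tilde f(y,v)$ in $B_1(0)$, where
$$
\tilde F(y,X) := \frac{\rho^{2k}}{\mu_k}F\!\left(\rho^k y,\, \frac{\mu_k}{\rho^{2k}}X\right), \qquad \tilde{\mathfrak a}(y) := \left(\frac{\mu_k}{\rho^k}\right)^{\!q-p}\!\mathfrak a(\rho^k y),
$$
$$
\tilde{\mathcal H}(y,\xi) := \left(\frac{\rho^k}{\mu_k}\right)^{\!p}\!\mathcal H\!\left(\rho^k y,\, \frac{\mu_k}{\rho^k}\xi\right), \qquad \tilde f(y,s) := \frac{\rho^{k(p+2)}}{\mu_k^{p+1}}\, f(\rho^k y,\, \mu_k s + u(0)).
$$
Positive $1$-homogeneity of Pucci's extremal operators preserves (A0)--(A1) with the same ellipticity constants, monotonicity of $\omega$ gives $\Theta_{\tilde F}(y)\leq C_{\mathrm F}\omega(\rho^k|y|)\leq \delta_\iota$ on $B_1(0)$, and \eqref{1.2} persists with the same $L_1, L_2$ relative to $\tilde{\mathfrak a}$ because $\tilde{\mathcal H}(y,\xi)\in [L_1,L_2]\bigl(|\xi|^p + \tilde{\mathfrak a}(y)|\xi|^q\bigr)$.

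The decisive point is the control of the scaled source. Since $\mu_k \geq \rho^{k(1+\beta)}$, a direct estimate gives
$$
\|\tilde f\|_{L^\infty(B_1(0)\times\R)} \leq \frac{\rho^{k(p+2)}}{\mu_k^{p+1}} \|f\|_{L^\infty(\Omega\times\R)} \leq \rho^{k[1-\beta(p+1)]} \|f\|_{L^\infty(\Omega\times\R)} \leq \delta_\iota,
$$
where the exponent $1-\beta(p+1)\geq 0$ uses exactly the sharp restriction $\beta\leq 1/(p+1)$ from \eqref{SharpExp}, and the last inequality rests on the smallness regime of Remark \ref{SmallRegime}. Hence $v\in\mathcal J(\tilde F,\tilde{\mathfrak a},\tilde f)(B_1(0))$ meets the hypotheses of Corollary \ref{c3.1}, and so
$$
\sup_{B_\rho(0)}|v(y)-v(0)| \leq \rho^{1+\beta} + \rho\,|Dv(0)|.
$$
Using $v(0)=0$ and $Dv(0)=(\rho^k/\mu_k)Du(0)$, then multiplying by $\mu_k$, this yields
$$
\sup_{B_{\rho^{k+1}}(0)}|u(x)-u(0)| \leq \mu_k\rho^{1+\beta} + \rho^{k+1}|Du(0)|,
$$
and an index shift $j\mapsto j+1$ in the defining sum for $\mu_k$ verifies that the right-hand side equals $\mu_{k+1}$, closing the induction.

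The main obstacle is the uniformity of the structural constants under rescaling at every level $k$. The preservation of ellipticity and of the modulus bound for $\tilde F$ is routine thanks to homogeneity of Pucci's operators and monotonicity of $\omega$; what is genuinely delicate is keeping $\|\tilde f\|_{L^\infty}\leq \delta_\iota$ uniformly, and this is precisely what forces the sharp threshold $\beta\leq 1/(p+1)$. A secondary, purely technical issue is to guarantee that $\tilde{\mathfrak a}$ remains uniformly bounded in $k$; this follows because $\mu_k/\rho^k = \rho^{k\beta} + |Du(0)|\sum_{j=0}^{k-1}\rho^{j\beta}$ is controlled by universal constants (via the Lipschitz bound on $u$ from Theorem \ref{GradThm} and the convergence of $\sum_{j\geq 0}\rho^{j\beta}$), together with $q\geq p$ from \eqref{1.3}.
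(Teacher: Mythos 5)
Your proposal is correct and follows essentially the same route as the paper's proof: define the same rescaled function $u_k = (u(\rho^k\cdot)-u(0))/\mathcal{A}_k$ (your $v$, with $\mathcal{A}_k=\mu_k$), rescale the operator and data accordingly, verify that the smallness regime is preserved using the threshold $\beta\leq \tfrac{1}{p+1}$, and apply Corollary \ref{c3.1} before scaling back. Your write-up is a bit more explicit in a couple of places (the $u(0)$ shift inside $\tilde f$, the index-shift verification for $\mu_{k+1}$, and the remark on boundedness of $\tilde{\mathfrak a}$), but these are refinements of the same argument, not a different one.
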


\begin{proof}
The proof will be via an induction argument.
\begin{enumerate}
  \item The case $k=1$ is precisely the statement of Corollary \ref{c3.1}.
  \item  Suppose now that \eqref{3.3} holds for all the values of $l=1,2,\cdots,k$.
  \item  Our goal is to prove it for $l=k+1$.
\end{enumerate}
Define $v_k: B_1(0) \to \R$ given by
$$
  \displaystyle u_k(x)\defeq \frac{u(\rho^k x)-u(0)}{\mathcal{A}_k}.
$$
Now, by defining
\begin{itemize}
  \item $\mathcal{A}_k \defeq \rho^{k(1+\beta)}+|D u (0)|\sum\limits_{j=0}^{k-1}\rho^{k+j\beta}$
  \item $F_k(x, \mathrm{X}) \defeq \frac{\rho^{2k}}{\mathcal{A}_k} F\left(\rho^k x, \left(\frac{\rho^{2k}}{\mathcal{A}_k}\right)^{-1}\mathrm{X}\right)$;
  \item $f_k(x, s) \defeq \frac{\rho^{k(p+2)} f(\rho^kx, \mathcal{A}_k s)}{\mathcal{A}_k^{p+1}}$;
  \item $\mathfrak{a}_k(x) \defeq \left(\frac{\mathcal{A}_k}{\rho^k}\right)^{q-p}\mathfrak{a}(\rho^k x)$;
  \item $\mathcal{H}_k(x, \xi)\defeq \left(\frac{\rho^{k}}{\mathcal{A}_k}\right)^p\mathcal{H}\left(\rho^kx, \left(\frac{\rho^{k}}{\mathcal{A}_k}\right)^{-1}\xi\right)$.
\end{itemize}
we get, in the viscosity sense,
$$
   \mathcal{H}_k(x, Du_k)F_k(x, D^2 u_k) = f_k(x, u_k) \quad \text{in} \quad B_1(0)
$$
Now, it is easy to check that $\|u_k\|_{L^{\infty}(B_1(0))} \leq 1$ (by induction hypothesis) and
{\small{
\begin{equation}\label{EqIterak}
\text{for every}\,\,\, k \in \mathbb{N}\,\,\,\left\{
\begin{array}{l}
u_k(0) = 0\\
\Theta_{\mathrm{F}_k}(x)\leq \Theta_{\mathrm{F}}(x)\ll1\\
\left\|f_k\right\|_{L^{\infty}(B_{1}(0)\times \R)}\leq \rho^{k[1-\beta(p+1)]} \left\|f\right\|_{L^{\infty}(B_{1}(0)\times \R)}\ll1,
\end{array}
\right.
\end{equation}}}
where we have used the sharp expression \eqref{SharpExp} and the smallness regime (Remark \ref{SmallRegime}). Therefore, $F_k$, $f_k$ and $u_k$ satisfy the assumptions of Approximation Lemma \ref{lem.flat}. Hence, we can apply Corollary \ref{c3.1} to $u_k$ and obtain
$$
\displaystyle \sup_{B_{\rho}(0)} \left|u_k(x)-u_k(0)\right|\leq\rho^{1+\beta}+\rho|D v_k(0)|,
$$
which implies
$$
\displaystyle\sup_{B_{\rho}(0)}\frac{|u(\rho^k x)-u(0)|}{\mathcal{A}_k}\leq \rho^{1+\beta}+\frac{\rho^{k+1}|D u(0)|}{\mathcal{A}_k}.
$$
Finally, by scaling back to the unit domain, we get
$$
  \displaystyle\sup_{B_{\rho^{k+1}}(0)}|u(x)-u(0)| \leq  \rho^{(k+1)(1+\beta)}+ |D u(0)|\sum\limits_{j=0}^{k}\rho^{k+1+j\beta},
$$
thereby obtaining the $(k+1)-$step of induction.
\end{proof}

The next result leads to a sharp regularity estimate inside the singular zone

\begin{lemma}\label{l3.3}
Suppose that the assumptions of Lemma \ref{lem.flat} are in force. Then, there exists a universal constant $\mathrm{M}_0>1$ such that, for $\rho$ as in the conclusion of that Lemma,
$$
\displaystyle \sup_{B_{r}(0)}|u(x)-u(0)|\leq \mathrm{M}_0 \cdot r^{1+\beta}\left(1+|D u(0)|r^{-\beta}\right),\,\,\forall r\in(0,\rho).
$$
\end{lemma}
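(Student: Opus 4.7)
The plan is to pass from the discrete dyadic estimate of Lemma \ref{lem.dy} to a continuous-in-$r$ estimate by sandwiching an arbitrary radius $r\in(0,\rho)$ between consecutive powers of $\rho$ and then bounding the geometric sum in closed form. Since $\rho\in(0,1/2)$ is universal and so is $\beta\in(0,1)$, all numerical factors appearing below will be absorbed into a universal constant $\mathrm{M}_0>1$.

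First, given $r\in(0,\rho)$, I would pick the unique $k\in\mathbb{N}$ with $\rho^{k+1}<r\le\rho^k$, so in particular $\rho^k\le r/\rho$ and $\rho^{k\beta}\le (r/\rho)^\beta$. By monotonicity of the sup in the radius and Lemma \ref{lem.dy},
\begin{equation*}
\sup_{B_r(0)}|u(x)-u(0)|\;\le\;\sup_{B_{\rho^k}(0)}|u(x)-u(0)|\;\le\;\rho^{k(1+\beta)}+|Du(0)|\sum_{j=0}^{k-1}\rho^{k+j\beta}.
\end{equation*}
The second term is a partial geometric series which I would compute explicitly:
\begin{equation*}
\sum_{j=0}^{k-1}\rho^{k+j\beta}\;=\;\rho^{k}\cdot\frac{1-\rho^{k\beta}}{1-\rho^{\beta}}\;\le\;\frac{\rho^{k}}{1-\rho^{\beta}}.
\end{equation*}

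Next I would translate powers of $\rho^k$ into powers of $r$. Using $\rho^k\le r/\rho$, we get $\rho^{k(1+\beta)}\le r^{1+\beta}/\rho^{1+\beta}$, and combining with the previous display,
\begin{equation*}
\sup_{B_r(0)}|u(x)-u(0)|\;\le\;\frac{r^{1+\beta}}{\rho^{1+\beta}}\;+\;\frac{|Du(0)|}{1-\rho^{\beta}}\cdot\frac{r}{\rho}.
\end{equation*}
Setting
\begin{equation*}
\mathrm{M}_0\;\defeq\;\max\!\left\{\frac{1}{\rho^{1+\beta}},\;\frac{1}{\rho\,(1-\rho^{\beta})}\right\}\;>\;1,
\end{equation*}
which is universal (since $\rho,\beta$ are), we obtain
\begin{equation*}
\sup_{B_r(0)}|u(x)-u(0)|\;\le\;\mathrm{M}_0\bigl(r^{1+\beta}+|Du(0)|\,r\bigr)\;=\;\mathrm{M}_0\,r^{1+\beta}\bigl(1+|Du(0)|\,r^{-\beta}\bigr),
\end{equation*}
which is the claimed estimate.

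There is no real obstacle here, the content is entirely in Lemma \ref{lem.dy}; this is a routine interpolation between dyadic scales. The only point worth flagging is that one must be careful to apply Lemma \ref{lem.dy} in the reduced configuration (normalized in the unit ball after the scaling of Remark \ref{SmallRegime}), so that the constants $\rho$, $\beta$ and the smallness regime of Lemma \ref{lem.flat} are preserved along the iteration and so that $\mathrm{M}_0$ indeed depends only on universal parameters.
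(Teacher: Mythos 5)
Your proof is correct and follows essentially the same route as the paper: sandwich $r$ between $\rho^{k+1}$ and $\rho^k$, invoke Lemma \ref{lem.dy} on $B_{\rho^k}(0)$, bound the geometric series by $1/(1-\rho^\beta)$, and convert powers of $\rho^k$ into powers of $r$ to obtain the universal constant $\mathrm{M}_0$. The only cosmetic difference is in how the final constant is assembled (the paper uses $r\le\rho^k$ to bound $\rho^{-k\beta}\le r^{-\beta}$ directly and arrives at $\mathrm{M}_0=\rho^{-(1+\beta)}(1-\rho^\beta)^{-1}$, whereas you bound $\rho^k\le r/\rho$ for both terms and take a max), which does not change the substance.
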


\begin{proof}
Firstly, fix any $r\in(0,\rho)$ and choose $k\in\mathbb{N}$ the smallest integer such that $\rho^{k+1}<r\leq\rho^{k}$. By using Lemma \ref{lem.dy}, we estimate
\begin{align*}
\sup_{B_r(0)}\frac{|u(x)-u(0)|}{r^{1+\beta}} & \leq \frac{1}{\rho^{1+\beta}} \sup_{B_{\rho^k}(0)}\frac{|u(x)-u(0)|}{\rho^{k(1+\beta)}} \\
										   & \displaystyle \leq \frac{1}{\rho^{1+\beta}} \left(1+|Du(0)|\rho^{-k(1+\beta)}\sum_{j=0}^{k-1}\rho^{k+j\beta}\right)\\
										   & \leq \frac{1}{\rho^{1+\beta}} \left(1+|Du(0)|\rho^{-k\beta}\sum_{j=0}^{k-1}\rho^{j\beta}\right)\\
										   & \leq \frac{1}{\rho^{1+\beta}} \left(1+|Du(0)|\rho^{-k\beta}\frac{1}{1-\rho^\beta}\right)\\
   										   & \leq \mathrm{M}_0\cdot (1+|Du(0)|r^{-\beta}),\\
\end{align*}
where $\mathrm{M}_0 \defeq \frac{1}{\rho^{1+\beta}(1-\rho^\beta)}$, thereby concluding the proof.
\end{proof}

\section{Proofs of the main results}\label{sec.ProofMRes}

Now, we can give the proof of first main result of this manuscript:

\begin{proof}[{\bf Proof of Theorem \ref{main1}}]
Without loss of generality, we may assume that $x_0=0$. Notice that the degenerate ellipticity of the operator naturally leads us to separate the study into two different regimes depending on whether $|Du(0)|$ is ``sufficiently small'' or not.

\vspace{0.3cm}

\begin{enumerate}
  \item If $0 \in \mathcal{S}_{r, \beta}(u, \Omega)$
\vspace{0.1cm}

By using Lemma \ref{lem.dy} we estimate
\begin{align*}
\sup_{B_r(0)}\left|u(x)-\mathfrak{l}_{0} u(x)\right| & \leq \sup_{B_r(0)}|u(x)-u(0)|+|D u(0)|r \\
												 &  \leq \mathrm{M}_0 \cdot r^{1+\beta}\left(1+|D u(0)|r^{-\beta}\right)+r^{1+ \beta}\\
												 & \leq  3\mathrm{M}_0 \cdot r^{1+\beta}\\
\end{align*}
as desired in this case.

\vspace{0.3cm}
  \item If $0 \notin \mathcal{S}_{r, \beta}(u, \Omega)$ i.e. $r^\beta< |D u(0)|\leq L$
\vspace{0.1cm}

In this case, let us define $r_0 \defeq |D u(0)|^{\frac{1}{\beta}}$ and
$$
u_{r_0}(x) \defeq \frac{u(r_0x)-u(0)}{r_0^{1+\beta}}.
$$
Hence, we are allowed to apply Lemma \ref{l3.3} and conclude that
\begin{equation}\label{EstSmallGrad}
\displaystyle \sup_{B_{r_0}(0)} |u(r_0x)-u(0)| \leq 2\mathrm{M}_0\cdot r_0^{1+\beta}.
\end{equation}
Now, notice that $u_{r_0}$ fulfills in the viscosity sense
$$
\mathcal{H}_{r_0}(x, D u_{r_0})F_{r_0}(x, D^2 u_{r_0}) = f_{r_0}(x, u_{r_0}) \quad \text{in} \quad B_1(0),
$$
where
$$
\left\{
\begin{array}{rcl}
F_{r_0}(x, \mathrm{X}) & \defeq & r_0^{1-\beta}F\left(r_0x, r_0^{-(1-\beta)}\mathrm{X}\right) \\
f_{r_0}(x, s)  & \defeq & r_0^{1-\beta(p+1)}f(r_0x, r_0^{1+\beta}s)\\
\mathcal{H}_{r_0}(x, \xi ) & \defeq & r_0^{-p\beta}\mathcal{H}\left(r_0x, r_0^{\beta}\xi\right)\\
\mathfrak{a}_{r_0}(x) & \defeq & r_0^{(q-p)\beta}\mathfrak{a}(r_0x)
\end{array}
\right.
$$
and
\begin{equation}\label{Eq3.2}
\displaystyle u_{r_0}(0) = 0, \,\,\,|D u_{r_0}(0)| = 1 \quad \text{and} \quad \|f_{r_0}\|_{L^{\infty}(B_1(0) \times \R)} \le 1.
\end{equation}
Moreover, \eqref{EstSmallGrad} assures us that $u_{r_0}$ is uniformly bounded in the $L^{\infty}-$topology. From Theorem \ref{GradThm} it follows (using \eqref{Eq3.2}) that
$$
\|u_{r_0}\|_{C^{1,\gamma}(B_{1/2}(0))}\leq C \quad (\text{for a universal constant}).
$$
Such an estimate and one more time \eqref{Eq3.2}, allow us to choose a universal radius $0<\rho_0\ll 1$ such that
$$
\mathfrak{c}_0 \leq |D u_{r_0}(x)|\leq \mathfrak{c}_0^{-1} \,\,\,\forall \,\,x \in B_{\rho_0}(0) \,\,\,\text{and}\,\,\, \mathfrak{c}_0 \in (0, 1) \,\,\,\text{fixed}.
$$

Particularly, we obtain (in the viscosity sense)
\[
 F_{r_0}(x, D^2u_{r_0}) =\tilde{f}_{r_0}(x, u_{r_0})\defeq \frac{f(x, u_{r_0})}{\mathcal{H}_{r_0}(x, |Du_{r_0}|)} \quad \text{in} \quad B_{\rho_0}(0),
\]

The previous statement says $\tilde{f}_{r_0}$ is (universally) bounded in $B_{\rho_0}(0)$ and we get the result from classical estimates (see, \cite{C89}, \cite[Section 8.2]{CC95} and \cite{Tru88}) since the equation becomes uniformly elliptic:
$$
  \mathcal{M}_{\lambda, \Lambda}^-(D^2 u_{r_0})\leq C_0\left(p, q, \mathfrak{c}_0, L^{-1}_1, \|f\|_{L^{\infty}(\Omega \times \R)}, \|\mathfrak{a}\|_{L^{\infty}(\Omega)}\right)
$$
and
$$
\mathcal{M}_{\lambda, \Lambda}^+(D^2 u_{r_0})\geq -C_0\left(p, q, \mathfrak{c}_0, L^{-1}_1, \|f\|_{L^{\infty}(\Omega \times \R)}, \|\mathfrak{a}\|_{L^{\infty}(\Omega)}\right).
$$

 Therefore, $u_{r_0}\in C_{\text{loc}}^{1, \alpha}(B_{\rho_0}(0))$ for every $\alpha \in (0, \alpha_{\mathrm{F}})$. As a consequence, we have
 \begin{equation}\label{EqEstUnifElliOper}
  \displaystyle \sup_{B_r(0)}\left|u_{r_0}(x)-\mathfrak{l}_{0} u_{r_0}(x)\right|\leq C \cdot r^{1+\alpha}, \,\,\,\forall\,\,r \in \left(0, \frac{\rho_0}{2}\right),
 \end{equation}
 which, one translates in terms of $u$ as follows
 $$
 \displaystyle \sup_{B_r(0)}\left|\frac{u(r_0x)-u(0)}{r_0^{1+\beta}}-r_0^{-\beta}Du(0)\cdot x\right|\leq C \cdot r^{1+\alpha}.
 $$
 Next, by doing $\beta=\alpha$ in the above estimate (see, \eqref{SharpExp}), we conclude
  $$
 \displaystyle \sup_{B_r(0)}\left|u(x)-\mathfrak{l}_{0} u(x)\right|\leq C \cdot r^{1+\beta}, \,\,\,\forall\,\,r \in \left(0, \frac{\rho_0r_0}{2}\right),
 $$

 Finally, for $ r \in \left[\frac{\rho_0r_0}{2}, r_0\right)$, we obtain
 $$
 \begin{array}{rcl}
 \displaystyle \sup_{B_r(0)}\left|u(x)-\mathfrak{l}_{0} u(x)\right| & \leq &\displaystyle \sup_{B_{r_0}(0)}\left|u(x)-\mathfrak{l}_{0} u(x)\right|\\
 &\leq &\displaystyle \sup_{B_{r_0}(0)}\left|u(x)-u(0)\right| +|Du(0)|r_0\\
 &\leq & (2\mathrm{M}_0+1)\cdot r_0^{1+\beta}\\
 &\leq & 3\mathrm{M}_0 \left(\frac{2}{\rho_0}\right)^{1+\beta}\cdot r^{1+\beta}.
 \end{array}
   $$

In conclusion, from characterization of Dini-Campanato spaces in \cite{Kov99} we deduce that $u$ is $C^{1, \beta}$ at $x_0 = 0$. Furthermore, a standard covering argument yields the corresponding estimate in any subset $\Omega^{\prime} \Subset \Omega$, which completes the proof of Theorem.

\end{enumerate}
\end{proof}

\begin{proof}[{\bf Proof of Corollary \ref{Cormain1}}]
It follows immediately from Theorem \ref{main1}, since solutions to the homogeneous problem (with ``frozen coefficients'') for such classes of operators are $C_{\text{loc}}^{1,1}(\Omega)$, i.e., $\alpha_{\mathrm{F}}=1$ (see, Section \ref{Examples}). For this reason, we can choose $\beta = \frac{1}{p+1} \in (0, 1)$ in the sentences \eqref{2.5}, \eqref{EqIterak} and \eqref{EqEstUnifElliOper}.
\end{proof}

As mentioned before, with the aid of Theorem \ref{main1} we can prove the growth control on the gradient stated in Corollary\ref{Cormain2}, thus obtaining a finer gradient control to solutions of \eqref{1.1} at interior points.

\begin{proof}[{\bf Proof of Corollary \ref{Cormain2}}]  Let $x_0 \in  \Omega^{\prime} \Subset \Omega$ be an interior point. Now, we define the scaled auxiliary function $u_{r, x_0}: B_1(0) \to \R$ by:
$$
  u_{r, x_0}(x) \defeq \frac{u(x_0+rx)-u(x_0)-rx \cdot Du(x_0)}{r^{1+\beta}}.
$$
Now, observe that $u_{r, x_0}$ fulfills in the viscosity sense
$$
  \mathcal{H}_{r, x_0}(x, D u_{r, x_0}+\xi_{r, x_0})F_{r, x_0}(x, D^2 u_{r, x_0}) = f_{r, x_0}(x, s) \quad \text{in} \quad B_1(0),
$$
where
$$
\left\{
\begin{array}{rcl}
  F_{r, x_0}(x, X) & \defeq & r^{1-\beta}F\left(x_0+rx, \frac{1}{r^{1-\beta}}X\right) \\
  f_{r, x_0}(x, s)  & \defeq & r^{1-(p+1)\beta}f(x_0 + r x, r^{1+\beta}s)\\
  \mathcal{H}_{r, x_0}(x, \xi ) & \defeq & r^{-\beta p}\mathcal{H}(x_0+rx, r^{\beta}\xi)\\
  \mathfrak{a}_{r, x_0}(x) & \defeq & r^{(q-p)\beta}\mathfrak{a}(x_0+rx)\\
  \xi_{r, x_0} & \defeq & r^{-\beta}Du(x_0).
\end{array}
\right.
$$
From Theorem \ref{main1} we get that
$$
\|u_{r, x_0}\|_{L^{\infty}\left(B_{\frac{1}{4}}\right)} \leq C \cdot \left(\|u\|_{L^{\infty}(B_1(0))} + 1+ \|f\|_{L^{\infty}(B_1(0)\times \R)}^{\frac{1}{p+1}}\right)
$$
Finally, by invoking the gradient estimate (Theorem \ref{GradThm}) we obtain that
{\small{
$$
\begin{array}{rcl}
  \displaystyle   \sup_{B_{\frac{r}{8}}(x_0)} \frac{|D u(x)-Du(x_0)|}{r^{\beta}} & = & \displaystyle \sup_{B_{\frac{1}{8}}(x_0)} |D u_{r, x_0}(y)| \\
   & \leq  & C \cdot \left(\|u_{r, x_0}\|_{L^{\infty}\left(B_{\frac{1}{4}}(0)\right)} + 1+ \|f_{r, x_0}\|_{L^{\infty}\left(B_{\frac{1}{4}}(0)\times \R\right)}^{\frac{1}{p+1}}\right) \\
   & \le & C_0 \cdot \left(\|u\|_{L^{\infty}(B_1(0))} + 1 +\|f\|_{L^{\infty}(B_1(0)\times \R)}^{\frac{1}{p+1}}\right),
\end{array}
$$}}
thereby finishing the proof.
\end{proof}

Before proving our last main result, let us present a comparison tool. The proof holds the same ideas as ones in \cite[Theorem 1.1]{BD0} and \cite[Theorem 2.1]{BD2}. For this reason, we will omit the proof here.

\begin{lemma}[{\bf Comparison Principle}]\label{comparison principle}
	Assume that assumptions (A0)-(A2) there hold. Let $f \in C^0(\bar{\Omega})$ and $h$ be a continuous increasing function satisfying $h(0) = 0$. Suppose $u_1$ and $u_2$ are respectively a viscosity supersolution and 	subsolution of
	$$
	\mathcal{H}(x, Dw) F(x, D^2w) = h(w) + f(x) \quad\text{in} \quad \Omega.
	$$
If $u_1 \geq u_2$ on $\partial \Omega$, then $u_1 \geq u_2$ in $\Omega$.

Furthermore, if $h$ is nondecreasing (in particular if $h \equiv 0$), the result holds if $u_1$ is a strict supersolution or vice versa if $u_2$ is a strict subsolution.
\end{lemma}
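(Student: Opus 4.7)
The plan is to run a doubling-variable argument in the viscosity sense, tailored to the degenerate operator $\mathcal{H}(x, Du) F(x, D^2 u)$ as in Birindelli--Demengel \cite{BD0, BD2}. Suppose by contradiction that $M \defeq \sup_{\Omega}(u_2 - u_1) > 0$. Since $u_1 \geq u_2$ on $\partial\Omega$, this maximum is attained at some interior point $\hat{x} \in \Omega$. To detect it, I would introduce the doubled test function
$$
\Phi_{\varepsilon}(x, y) \defeq u_2(x) - u_1(y) - \frac{|x - y|^{2+\gamma}}{(2+\gamma)\varepsilon} - \nu(|x - \hat{x}|^2 + |y - \hat{x}|^2),
$$
where $\gamma > 0$ is a small exponent chosen so that the gradient of the penalty term, $p_\varepsilon = |x_\varepsilon - y_\varepsilon|^{\gamma}(x_\varepsilon - y_\varepsilon)/\varepsilon$, is compatible with the degeneracy $\mathcal{K}_{p,q,\mathfrak{a}}$, and where $\nu > 0$ is an auxiliary parameter that localizes the maximizer near $\hat{x}$. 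Standard compactness then yields a sequence of maximizers $(x_\varepsilon, y_\varepsilon) \to (\hat{x}, \hat{x})$ in the interior of $\Omega$, with $|x_\varepsilon - y_\varepsilon|^{2+\gamma}/\varepsilon \to 0$ and $u_2(x_\varepsilon) - u_1(y_\varepsilon) \to M$.

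The next step is to invoke Crandall--Ishii--Lions' maximum principle for semicontinuous functions in order to produce symmetric matrices $X_\varepsilon, Y_\varepsilon \in \mathrm{Sym}(N)$ with $(p_\varepsilon + \text{o}(1), X_\varepsilon) \in \overline{J}^{2,+} u_2(x_\varepsilon)$, $(p_\varepsilon + \text{o}(1), Y_\varepsilon) \in \overline{J}^{2,-} u_1(y_\varepsilon)$, and a matrix inequality that, modulo the $\nu$ perturbation, forces $X_\varepsilon - Y_\varepsilon$ to be controlled by the Hessian of the penalty. Plugging these into the viscosity sub/supersolution inequalities, one obtains
$$
\mathcal{H}(x_\varepsilon, p_\varepsilon) F(x_\varepsilon, X_\varepsilon) \geq h(u_2(x_\varepsilon)) + f(x_\varepsilon), \quad \mathcal{H}(y_\varepsilon, p_\varepsilon) F(y_\varepsilon, Y_\varepsilon) \leq h(u_1(y_\varepsilon)) + f(y_\varepsilon).
$$
The strategy is to subtract these, let $\varepsilon \to 0$, and derive the contradiction $0 \geq h(u_2(\hat{x})) - h(u_1(\hat{x})) > 0$, which holds by strict monotonicity of $h$ together with $u_2(\hat{x}) - u_1(\hat{x}) = M > 0$.

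The hard part, as is standard for such doubly degenerate models, is the case where $p_\varepsilon \to 0$, i.e.\ $x_\varepsilon = y_\varepsilon$, for then $\mathcal{H}(x_\varepsilon, p_\varepsilon) = 0$ and the viscosity inequalities collapse to the vacuous statement $0 \geq h + f$. Two devices address this. First, the choice of the super-quadratic penalty $|x - y|^{2+\gamma}/\varepsilon$ (rather than the classical $|x-y|^2/\varepsilon$) guarantees that, as long as $x_\varepsilon \neq y_\varepsilon$, the test gradient $p_\varepsilon$ is nondegenerate and $\mathcal{H}(x_\varepsilon, p_\varepsilon) \geq L_1 |p_\varepsilon|^p > 0$, so one can safely divide by $\mathcal{H}$ and use uniform ellipticity (A1) along with the (A2) oscillation estimate $|F(x_\varepsilon, X_\varepsilon) - F(y_\varepsilon, X_\varepsilon)| \leq \mathrm{C}_{\mathrm{F}}\omega(|x_\varepsilon - y_\varepsilon|)\|X_\varepsilon\|$ to pass to the limit. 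Second, in the degenerate subcase $x_\varepsilon = y_\varepsilon$, the matrix inequality reduces to $X_\varepsilon \leq Y_\varepsilon$, and the normalization (A0) together with uniform ellipticity gives $F(x_\varepsilon, X_\varepsilon) \leq F(x_\varepsilon, Y_\varepsilon)$; comparing with the viscosity inequalities (which now read as sign constraints on $h + f$ alone) still yields the desired contradiction via the strict monotonicity of $h$.

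Finally, the variant where $h$ is only nondecreasing (in particular, $h \equiv 0$) is handled by exploiting the strictness hypothesis: if $u_1$ is a \emph{strict} supersolution, there is $\eta > 0$ with $\mathcal{H}(x, Du_1)F(x, D^2u_1) \leq h(u_1) + f(x) - \eta$ in the viscosity sense, and this extra $\eta$ plays the role previously played by the strict increase of $h$, producing the same contradiction at the limit. The symmetric case with $u_2$ a strict subsolution is analogous. Because the argument is a careful adaptation of \cite[Thm.\ 1.1]{BD0} and \cite[Thm.\ 2.1]{BD2} to the non-homogeneous degeneracy \eqref{N-HDeg} (with $\mathcal{H}$ dominated above and below by $\mathcal{K}_{p,q,\mathfrak{a}}$), and both bounds $L_1, L_2$ on $\mathcal{H}$ enter only through the above passage to the limit, the full proof can be omitted as in the statement of the lemma.
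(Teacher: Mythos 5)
Your proposal is correct and follows exactly the approach the paper delegates to when it omits the proof: a doubling-of-variables argument \`{a} la Birindelli--Demengel \cite[Thm.~1.1]{BD0}, \cite[Thm.~2.1]{BD2}, with a super-quadratic penalty, the Crandall--Ishii maximum principle for semicontinuous functions, and a conclusion via the strict monotonicity of $h$ (or the strictness of the sub/supersolution in the nondecreasing variant). One framing nitpick worth noting: the subcase $p_\varepsilon=0$ is actually the \emph{easy} one rather than the hard one --- there $\mathcal{H}(x_\varepsilon,p_\varepsilon)=0$ strips the $F$-term out entirely, the viscosity inequalities reduce to $0\geq h(u_2(x_\varepsilon))+f(x_\varepsilon)$ and $0\leq h(u_1(y_\varepsilon))+f(y_\varepsilon)$, and these subtract directly (with $x_\varepsilon=y_\varepsilon$) to $h(u_1)\geq h(u_2)$, contradicting the strict increase of $h$ without ever invoking the matrix inequality or the ellipticity of $F$; the genuinely delicate regime is $p_\varepsilon\to 0$ along $p_\varepsilon\neq 0$, where $\mathcal{H}(x_\varepsilon,p_\varepsilon)$ is small but positive and $\|X_\varepsilon\|$ may blow up, and the super-quadratic exponent $\gamma$ in the penalty is tuned precisely to keep the product $\mathcal{H}(x_\varepsilon,p_\varepsilon)\|X_\varepsilon\|$ under control as $\varepsilon\to 0$.
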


Finally, we are in a position to prove the non-degeneracy property.

\begin{proof}[{\bf Proof of Theorem \ref{main3}}]

Firstly, let us introduce the comparison function:
$$
   \Theta(x) \defeq \mathfrak{c}\cdot |x|^{\frac{p+2}{p+1}},
$$
where the constant $\mathfrak{c}>0$ will be chosen in such a way that
$$
\mathcal{H}(x, D\Theta)F(x, D^2 \Theta) < f(x)\quad \text{in} \quad B_R(0) \Subset \Omega
$$
Note that
\begin{eqnarray*}
	D \Theta(x) &=& \mathfrak{c}\cdot \left(\frac{p+2}{p+1}\right) |x|^{-\frac{1}{p+1}}x\\
	D^2 \Theta(x)& =& \mathfrak{c} \cdot \left(\frac{p+2}{p+1}\right)\left(\textrm{Id}_N - \frac{p}{p+1} |x|^{-p} x \otimes x \right)
\end{eqnarray*}
Direct computation shows that
$$
	\mathcal{M}^{+}_{\lambda, \Lambda}(D^2 \Theta(x)) \le \mathfrak{c} \cdot \left(\frac{p+2}{p+1}\right)\left( \frac{1}{p+1} \lambda + (N-1) \Lambda \right) |x|^{-\frac{p}{p+1}}
$$
Thus, from (A1), \eqref{1.2} and \eqref{N-HDeg} assumptions we have
$$
\begin{array}{rcl}
  \mathcal{H}(x, D\Theta)F(x, D^2 \Theta) & \leq & L_1 \cdot \left[|D\Theta|^p + \mathfrak{a}(x) |D\Theta|^q \right] \mathcal{M}^{+}_{\lambda, \Lambda}(D^2 \Theta(x)) \\
   & \le & \Xi_1\cdot \Xi_2,
\end{array}
$$
where
$$
\Xi_1 \defeq \left[\left(\frac{p+2}{p+1}\right)^{p+1} \mathfrak{c}^{p+1} + \|\mathfrak{a}\|_{L^{\infty}(\Omega)} \left(\frac{p+2}{p+1}\right)^{q+1}\mathfrak{c}^{q+1}\left(\frac{\diam(\Omega)}{2}\right)^{\frac{q-p}{p+1}}\right],
$$
and
$$
\Xi_2 \defeq L_1\cdot \left( \frac{1}{p+1} \lambda + (N-1) \Lambda\right).
$$
At this point, consider the analytical function $\mathfrak{g}: [0, \infty) \to \R$ given by
$$
   \mathfrak{g}(t) \defeq \Xi_2\cdot t^{p+1}\left[\left(\frac{p+2}{p+1}\right)^{p+1}+\Xi_3\cdot t^{q-p}\right]-\mathfrak{m},
$$
where
$$
    \displaystyle \Xi_3 \defeq \|\mathfrak{a}\|_{L^{\infty}(\Omega)} \left(\frac{p+2}{p+1}\right)^{q+1}\left(\frac{\diam(\Omega)}{2}\right)^{\frac{q-p}{p+1}} \quad \text{and} \quad \mathfrak{m} \defeq \inf_{\Omega} f(x).
$$
Now, let us label $\mathrm{T}_0$ its smallest root, which there exists thanks to the assumption $\displaystyle \mathfrak{m} \defeq \inf_{\Omega} f(x)>0$. Therefore, we are able to choose a $\mathfrak{c}=\mathfrak{c}(\mathfrak{m},\|\mathfrak{a}\|_{L^{\infty}(\Omega)}, L_1, N, \lambda, \Lambda, p,q, \Omega) \in (0, \mathrm{T}_0)$ such that
$$
 \mathcal{H}(x, D\Theta)F(x, D^2 \Theta) < f(x) \qquad \text{point-wisely}.
$$

In the sequel, for $x_0 \in \Omega^{\prime} \Subset \Omega$ let us define the scaled function:
 $$
    u_{r, x_0}(x) \defeq \frac{u(x_0+rx)-u(x_0)+ \varepsilon}{r^{\frac{p+2}{p+1}}} \quad \text{for} \quad x \in B_1(0).
 $$
Now, observe that $u_{r, x_0}$ fulfills in the viscosity sense
$$
  \mathcal{H}_{r, x_0}(x, D u_{r, x_0})F_{r, x_0}(x, D^2 u_{r, x_0}) = f_{r, x_0}(x) \quad \text{in} \quad B_1(0),
$$
where
$$
\left\{
\begin{array}{rcl}
  F_{r, x_0}(x, \mathrm{X}) & \defeq & r^{\frac{p}{p+1}}F\left(x_0+rx, r^{-\frac{p}{p+1}}\mathrm{X}\right) \\
  f_{r, x_0}(x)  & \defeq & f(x_0 + r x)\\
  \mathcal{H}_{r, x_0}(x, \xi ) & \defeq & r^{-\frac{p}{p+1}}\mathcal{H}\left(x_0+rx, r^{\frac{1}{p+1}}\xi\right)\\
  \mathfrak{a}_{r, x_0}(x) & \defeq & r^{\frac{q-p}{p+1}}\mathfrak{a}(x_0+rx).
\end{array}
\right.
$$

Finally, if $u_{r, x_0} \leq \Theta$ on the whole boundary of $B_1(0)$, then the Comparison Principle (Lemma \ref{comparison principle}), would imply that
$$
   u_{r, x_0}(x) \leq \Theta(x) \quad \mbox{in} \quad B_1(0),
$$
which contradicts the assumption that $u_{r, x_0}(0)> 0$. Therefore, there exists a point $z \in \partial B_1(0)$ such that
$$
      u_{r, x_0}(z) > \Theta(z) = \mathfrak{c}(\mathfrak{m},\|\mathfrak{a}\|_{L^{\infty}(\Omega)}, L_1, N, \lambda, \Lambda, p,q, \Omega).
$$
By scaling back and letting $\varepsilon \to 0$ we finish the proof of the Theorem.
\end{proof}

%%%%%%%%%%%%%%%%%%%%%%%%%%%%%%%%%%%%%%%%%%%%%%%%%%%%%%%%%%%%%%%%%%%%%%%%%%%%%%

\section{Connections with geometric free boundary problems}\label{sec.Applic}

In the sequel, we will present scenarios where our results also take place.

\subsection{Dead-core type problems}
The main purpose of this section is to study the dead-core problem for fully nonlinear models with non-homogeneous degeneracy, whose source term presents an absorption term:
\begin{equation}\label{Maineq}
	\mathcal{H}(x,D u).F(x, D^2 u) = f(x)\cdot u^{\mu}\chi_{\{u>0\}} \quad \textrm{in} \quad \Omega,
\end{equation}
where $0 < \mu < p+1$ is the order of reaction and $f$ is the Thiele modulus, which is bounded away from zero and infinity. We shall establish an improved regularity estimate for non-negative solutions of \eqref{Maineq} along their touching ground boundary $\partial\{u>0\}$ in contrast with Theorem \ref{main1}. This is an important piece of information in several free boundary problems (cf. \cite{daSLR20}, \cite{daSO19}, \cite{daSOS18}, \cite{daSRS19I}, \cite{daSS18} and \cite{Tei16} for more explanations)

Now, let us comment on the existence of a viscosity solution to the Dirichlet problem \eqref{Maineq}. Such an existence result follows by an application of Perron's method since a version of the Comparison Principle (Lemma \ref{comparison principle}) is available. In fact, let us consider functions $u^{\sharp}$ and $u_{\flat}$ viscosity solutions to the following boundary value problems:
$$\left\{
\begin{array}{rcccc}
\mathcal{H}(x, D u^{\sharp})F(x, D^2 u^{\sharp}) & = & 0 & \mbox{in} & \Omega, \\
u^{\sharp}(x) & = & g(x) &  \mbox{on} & \partial\Omega.\\
\end{array}
\right.
$$
and
$$
\left\{
\begin{array}{rllcc}
\mathcal{H}(x, D u_{\flat})F(x, D^2 u_{\flat}) &=& \|f\|_{L^\infty(\Omega)}\|g\|_{L^\infty(\partial\Omega)}^{\mu} & \mbox{in} & \Omega, \\
u_{\flat}(x) &=& g(x) &  \mbox{on} & \partial\Omega.\\
\end{array}
\right.
$$
The existence of such solutions follows via standard arguments. Moreover, notice that $u^{\sharp}$ and $u_{\flat}$ are, respectively, super-solution and sub-solution to \eqref{Maineq} (with non-negative continuous boundary datum $g$). Consequently, by Comparison Principle, Lemma \ref{comparison principle}, it is possible, under a direct application of Perron's method, to obtain the existence of a viscosity solution in $C^0(\overline{\Omega})$ to \eqref{Maineq}. Precisely, we have the following result:

\begin{theorem}[{\bf Existence and uniqueness}]\label{ThmExist} Let $f^{\ast} \in C^0([0, \infty)) $ be a bounded, increasing real function with $f^{\ast}(0)=0$. Suppose that the assumptions (A0)-(A1), \eqref{1.2} and \eqref{1.3} are in force. Suppose further that there exist a viscosity sub-solution $u_{\flat} \in C^0(\overline{\Omega}) \cap C^{0, 1}(\Omega)$ and a viscosity super-solution $u^{\sharp} \in C^0(\overline{\Omega}) \cap C^{0, 1}(\Omega)$ to
\begin{equation}\label{EqExistDC}
  \mathcal{H}(x, Du)F(x, D^2 u) = f^{\ast}(u) \quad \text{in} \quad \Omega,
\end{equation}
satisfying
$u_{\flat} = u^{\sharp} = g \in C^0(\partial \Omega)$. Define the class of functions
$$
     \mathrm{S}_{g}(\Omega) \defeq \left\{ v \in C^0(\overline{\Omega}) \;\middle|\; \begin{array}{c}
 v \text{ is a viscosity super-solution to } \\
\eqref{EqExistDC} \text{ such that } u_{\flat} \le v \le u^{\sharp}\\
\text{ and } v = g \text{ on } \partial \Omega
\end{array}
\right\}.
$$
Then,
$$
   	u(x) \defeq \inf_{\mathrm{S}_{g}(\Omega)} v(x), \,\,\,\, \mbox{for} \,\, x \in \overline{\Omega}
$$
is the unique continuous, up to the boundary, viscosity solution to
$$
\left\{
\begin{array}{rclcl}
  \mathcal{H}(x, Du)F(x, D^2u) & = & f^{\ast}(u) & \mbox{in} & \Omega \\
  u(x) & = & g(x) & \mbox{on} & \partial \Omega.
\end{array}
\right.
$$
\end{theorem}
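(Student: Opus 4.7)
The plan is to implement the classical Perron method for viscosity solutions, adapted to the doubly degenerate setting, with the Comparison Principle from Lemma \ref{comparison principle} (applied with $h = f^{\ast}$ and vanishing source) as the key analytic device. First I would check that the family $\mathrm{S}_g(\Omega)$ is non-empty and that $u$ is well-defined: since $u^{\sharp} \in \mathrm{S}_g(\Omega)$ by hypothesis and every $v \in \mathrm{S}_g(\Omega)$ is a super-solution agreeing with $u_{\flat}$ on $\partial\Omega$, Lemma \ref{comparison principle} gives $u_{\flat} \le v \le u^{\sharp}$ throughout $\Omega$; in particular $u_{\flat} \le u \le u^{\sharp}$, and $u = g$ on $\partial\Omega$.

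The heart of the argument is showing that $u$ is simultaneously a viscosity super- and sub-solution. For the super-solution property, one passes to the lower semi-continuous envelope $u_{\ast}$: by a standard diagonal selection applied to an infimizing sequence $v_n \in \mathrm{S}_g(\Omega)$ near any contact point with a test function, $u_{\ast}$ inherits the super-solution inequality from the family. The critical step, and the main obstacle, is the sub-solution property of the upper semi-continuous envelope $u^{\ast}$, handled via the classical \emph{bump construction}. Assuming for contradiction that $u^{\ast}$ is not a sub-solution at some interior point $\hat{x}$, there exist $\varphi \in C^2$ and parameters $r, \delta > 0$ with $u^{\ast} - \varphi$ attaining a strict local maximum at $\hat{x}$, $u^{\ast}(\hat{x}) = \varphi(\hat{x})$, and
$$
 \mathcal{H}(x, D\varphi(x)) F(x, D^2\varphi(x)) < f^{\ast}(\varphi(x)) \quad \text{for every } x \in B_r(\hat{x}),
$$
by continuity of all the data in \eqref{1.2} and \eqref{N-HDeg}. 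Defining $\tilde{\varphi}(x) \defeq \varphi(x) + \delta - \eta|x - \hat{x}|^2$ with $\eta, \delta$ chosen sufficiently small, $\tilde{\varphi}$ remains a strict classical super-solution on $B_r(\hat{x})$, stays trapped between $u_{\flat}$ and $u^{\sharp}$ there, and satisfies $u \le \tilde{\varphi}$ near $\partial B_r(\hat{x})$ while $\tilde{\varphi}(\hat{x}) < u(\hat{x})$. Then $w \defeq \min\{u, \tilde{\varphi}\}$ inside $B_r(\hat{x})$ extended by $u$ elsewhere is a viscosity super-solution lying in $\mathrm{S}_g(\Omega)$ yet strictly below $u$ at $\hat{x}$, contradicting the definition of $u$ as the pointwise infimum of $\mathrm{S}_g(\Omega)$.

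With both envelopes in hand, Lemma \ref{comparison principle} applied to the sub-solution $u^{\ast}$ and super-solution $u_{\ast}$, which both coincide with $g$ on $\partial\Omega$ thanks to the sandwich $u_{\flat} \le u \le u^{\sharp}$, yields $u^{\ast} \le u_{\ast}$ in $\Omega$. Combined with the trivial reverse inequality $u_{\ast} \le u \le u^{\ast}$, this forces $u_{\ast} = u = u^{\ast}$ to be continuous on $\overline{\Omega}$ and a genuine viscosity solution to \eqref{EqExistDC}. Uniqueness is then a further direct application of Lemma \ref{comparison principle}. The delicate point in the whole scheme is the preservation of the strict super-solution property after perturbation, since the gradient-dependent, doubly degenerate prefactor $\mathcal{H}(x, \cdot)$ could in principle misbehave where the perturbed gradient is small; however, the strict inequality at $\hat{x}$ propagates by joint continuity of $\mathcal{H}$, $F$ and $f^{\ast}$ to a whole neighborhood before the quadratic correction alters $D\varphi$ appreciably, so the bump construction remains admissible.
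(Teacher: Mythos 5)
Your overall scheme — Perron's method with the Comparison Principle (Lemma \ref{comparison principle}) as the uniqueness and ordering device, the lower envelope $u_*$ inheriting the super-solution property from the family, and the bump construction forcing the sub-solution property of $u^*$ — is exactly the approach the paper sketches (the paper in fact only states that Theorem \ref{ThmExist} ``follows by a direct application of Perron's method'' and supplies no further detail). Your observation that the $\mathcal{H}$-prefactor causes no trouble because the strict classical inequality survives joint continuity of $\mathcal{H}$, $F$, $f^{\ast}$ under a small $C^2$ perturbation is also correct.

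However, the bump perturbation is written with the wrong sign, and as written the contradiction does not go through. You set $\tilde{\varphi}(x) = \varphi(x) + \delta - \eta|x-\hat{x}|^2$ and claim $\tilde{\varphi}(\hat{x}) < u(\hat{x})$. But $\varphi$ touches $u^{\ast}$ from above at $\hat{x}$ with $\varphi(\hat{x}) = u^{\ast}(\hat{x}) \ge u(\hat{x})$, so
$$
\tilde{\varphi}(\hat{x}) = \varphi(\hat{x}) + \delta > u^{\ast}(\hat{x}) \ge u(\hat{x}),
$$
the opposite of what you assert; with this $\tilde{\varphi}$ the function $w = \min\{u,\tilde{\varphi}\}$ agrees with $u$ at $\hat{x}$ and yields no strict decrease. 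For the infimum-of-supersolutions version of Perron the correct perturbation pushes $\varphi$ \emph{down} at the center and \emph{up} near the rim, i.e.
$$
\tilde{\varphi}(x) = \varphi(x) - \delta + \eta|x-\hat{x}|^2, \qquad 0 < \delta < \gamma := -\max_{\partial B_r(\hat{x})}(u^{\ast}-\varphi),
$$
so that $\tilde{\varphi}(\hat{x}) = u^{\ast}(\hat{x}) - \delta < u^{\ast}(\hat{x})$ (note $u = u^{\ast}$ here, since $u$ is the pointwise infimum of continuous functions and hence upper semi-continuous) while on $\partial B_r(\hat{x})$ one still has $\tilde{\varphi} \ge \varphi - \delta \ge u^{\ast} + \gamma - \delta > u^{\ast}$, so the patch $w = \min\{u,\tilde{\varphi}\}$ in $B_r(\hat{x})$, $w = u$ outside, is well-defined and strictly below $u$ near $\hat{x}$. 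With the same sign flip the sandwich $u_{\flat} \le \tilde{\varphi} \le u^{\sharp}$ is also easier, since now $\tilde{\varphi}\le\varphi$ near $\hat{x}$. One further point worth a sentence: $\mathrm{S}_g(\Omega)$ is by definition a class of \emph{continuous} super-solutions, whereas $u = \inf_{\mathrm{S}_g} v$ is a priori only upper semi-continuous, so the patched function $w = \min\{u,\tilde{\varphi}\}$ need not lie in $\mathrm{S}_g(\Omega)$; the standard remedy is either to perform the patch with a fixed $v\in\mathrm{S}_g(\Omega)$ that nearly attains the infimum near $\hat{x}$, or to enlarge $\mathrm{S}_g(\Omega)$ to lower semi-continuous super-solutions for the purpose of the Perron argument, and this step deserves to be made explicit.
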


Next result regards the first step of a sharp geometric decay, which is a powerful device in nonlinear (geometric) regularity theory and plays a pivotal role in our approach.

\begin{lemma}[{\bf Flatness improvement regime}]\label{FlatLemma} Suppose that the assumptions (A0)-(A1), \eqref{1.2} and \eqref{1.3} are in force. Given $0<\eta<1$, there exists a $\delta = \delta(N, \lambda, \Lambda, p, \eta)>0$ such that if $\phi$ satisfies $0 \le \phi \le 1$, $\phi(0)=0$ and
\begin{equation} \label{1}
	\mathcal{H}(x,D \phi).F(x,D^2 \phi) = f(x)\cdot (\phi^+)^{\mu},
\end{equation}
in the viscosity sense in $B_1(0)$, with $\|f\|_{L^{\infty}(B_1(0))} \le \delta$. Then,
\begin{equation}
   \sup_{B_{1/2}(0) } \phi \le 1-\eta.
\end{equation}
\end{lemma}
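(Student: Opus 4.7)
The plan is to argue by compactness and contradiction, in the same spirit as the proof of Lemma \ref{lem.flat}. Suppose the lemma fails. Then there exist $\eta_0\in(0,1)$ and sequences $\{\phi_k\}$, $\{f_k\}$, $\{F_k\}$, $\{\mathfrak{a}_k\}$ satisfying the hypotheses with $\|f_k\|_{L^{\infty}(B_1(0))}\to 0$, yet
$$
\sup_{B_{1/2}(0)}\phi_k \;>\; 1-\eta_0 \qquad \text{for every } k.
$$

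First I would extract compactness. Since $0\le\phi_k\le 1$ and $\|f_k\|_{\infty}\le\delta$, the source terms $g_k(x):=f_k(x)(\phi_k^+)^{\mu}$ are uniformly bounded, so Theorem \ref{GradThm} furnishes a universal bound on $\|\phi_k\|_{C^{1,\gamma}(B_{3/4}(0))}$. By Arzel\`{a}--Ascoli, along a subsequence $\phi_k\to \phi_\infty$ in the local $C^1$-topology on $B_{3/4}(0)$, with $0\le\phi_\infty\le 1$ and $\phi_\infty(0)=0$. In parallel, Lemma \ref{lem.stab} provides a limiting operator $\mathfrak{F}_0$ still fulfilling (A0)--(A1) with $F_k\to\mathfrak{F}_0$ locally uniformly. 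Passing to the viscosity limit using the standard stability results (cf.\ \cite[Corollary 2.7 and Remark 2.8]{BD2}) and then invoking the ``Cutting Lemma'' (Lemma \ref{cutting}), exactly as in the proof of Lemma \ref{lem.flat}, I would conclude that
$$
\mathfrak{F}_0(D^2\phi_\infty)=0 \quad\text{in } B_{3/4}(0)
$$
in the viscosity sense.

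The conclusion then comes from the strong minimum principle. Since $\mathfrak{F}_0$ is uniformly elliptic and $\mathfrak{F}_0(\mathrm{O}_{N\times N})=0$, $\phi_\infty$ is in particular a nonnegative viscosity supersolution to $\mathcal{M}_{\lambda,\Lambda}^-(D^2 v)=0$. As it attains its minimum value $0$ at the interior point $0$, the classical strong minimum principle for uniformly elliptic viscosity solutions (cf.\ \cite[Ch.~4]{CC95}) forces $\phi_\infty\equiv 0$ on $B_{3/4}(0)$. This flatly contradicts the uniform convergence, which would yield $\sup_{B_{1/2}(0)}\phi_\infty\ge 1-\eta_0>0$.

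The main obstacle I anticipate is the rigorous passage to the limit: a priori the degenerate factor $\mathcal{H}_k$ need not converge to anything with controlled structure, and one cannot naively pass to the limit in the product $\mathcal{H}_k F_k$. The Cutting Lemma is precisely designed to handle this issue, stripping the degenerate factor in the limit and reducing matters to a homogeneous uniformly elliptic problem; after that reduction the strong minimum principle closes the argument in a routine way.
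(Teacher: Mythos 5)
Your proof is correct and follows essentially the same compactness-and-contradiction route as the paper. The only cosmetic differences are that you invoke $C^{1,\gamma}$ estimates (Theorem~\ref{GradThm}) where the paper gets by with plain H\"older compactness (\cite[Proposition~3.3]{DeF20}) since only locally uniform convergence is needed, and that you pass through the limit operator $\mathfrak{F}_0$ and the Cutting Lemma before reducing to Pucci extremal inequalities, whereas the paper records the extremal inequalities for $\phi_\infty$ directly from the stability argument; either way the strong minimum principle then forces $\phi_\infty\equiv 0$ and yields the contradiction.
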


\begin{proof} Suppose for the sake of contradiction that the thesis of Lemma fails to hold. This means that for some $\eta_0\in (0, 1)$ and for each $k \in \mathbb{N}$, we could find sequences $\{ \phi_k \}_{k}$, $\{F_k\}_{k}, \{\mathfrak{a}_k\}_k$ and $\{f_k\}_k$ satisfying $0 \le \phi_k \le 1$, $\phi_k(0)=0$, $\|f_k\|_{L^{\infty}(B_1(0))} \le \frac{1}{k}$ and
$$
	\mathcal{H}_k(x, D \phi_k)F_k(x,D^2 \phi_k) = f_k(x)\cdot (\phi^+_k)^{\mu}  \quad \textrm{in} \quad B_1(0),
$$
in the viscosity sense. However,
  \begin{equation}\label{Eqcont}
    \sup_{B_{1/2}(0)} \phi_k > 1-\eta_0
  \end{equation}
for all $k \ge 1$. Notice that $\|f_k (\phi^+_k)^{\mu}\|_{L^{\infty}(B_1(0))} \le \frac{1}{k}$. Moreover, by H\"{o}lder regularity of solutions (see, \cite[Proposition 3.3]{DeF20}), up to a subsequence, $\phi_k \to \phi_{\infty}$ local uniformly in $B_{2/3}(0)$. Notice that such a uniform limit verifies
$$
  0 \le \phi_{\infty}(x) \le 1  \quad \text{and} \quad \phi_{\infty}(0) = 0
$$
Moreover, by arguing as \cite[Lemma 4.1]{DeF20} and making use of stability results for viscosity solutions (cf. \cite[Corollary 2.7 and Remark 2.8]{BD2}), the limiting function $\phi_{\infty}$ satisfies in the viscosity sense
$$
	\mathcal{M}^{-}_{\lambda, \Lambda}(D^2 \phi_{\infty}) \le 0 \le \mathcal{M}^{+}_{\lambda,\Lambda}(D^2 \phi_{\infty}) \quad \text{in} \quad B_{2/3}(0).
$$

Finally, we conclude that $\phi_{\infty} \equiv 0$ in $B_{2/3}(0)$ via the Strong Maximum Principle (see, \cite[Proposition 4.9]{CC95}), which yields a contradiction with \eqref{Eqcont} by choosing $k \gg 1$ large enough, thereby finishing the proof.
\end{proof}

In the sequel, by applying Lemma \ref{FlatLemma} recursively in dyadic balls $B_{\frac{1}{2^k}}(0)$ with $\eta \defeq 1-\left(\frac{1}{2}\right)^{^{\frac{p+2}{p+1-\mu}}}$, we are able to establish improved regularity estimates along touching ground points. The proof follows the same lines as \cite[Theorem 1.2]{daSLR20} and \cite[Theorem 2]{Tei16}. For this reason, we will omit it.

\begin{theorem}[{\bf Improved regularity along free boundary}]\label{IRThm}
Let $u$ be a nonnegative and bounded viscosity solution to \eqref{Maineq} and consider $z_0 \in \partial\{u > 0\} \cap \Omega^{\prime}$ a free boundary point with $\Omega^{\prime} \Subset \Omega$. Then for $r_0 \ll  \min\left\{1, \frac{\dist(\Omega^{\prime}, \partial \Omega)}{2}\right\}$ and any $x \in B_{r_0}(z_0) \cap \{u > 0\}$ there holds
$$
   u(x)\leq \mathfrak{C}^{\sharp}\cdot \max\left\{1, \|u\|_{L^\infty(\Omega)} \right\}.|x-z_0|^{\frac{p+2}{p+1-\mu}},
$$
where $\mathfrak{C}^{\sharp}>0$ depends only on $\displaystyle N, \lambda, \Lambda, \mu, \|f\|_{L^{\infty}(\Omega)}$ and $\dist(\Omega^{\prime}, \partial \Omega)$.
\end{theorem}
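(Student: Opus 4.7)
The plan is to iterate the Flatness Lemma \ref{FlatLemma} along a dyadic sequence of balls centered at the free boundary point $z_0$, after a preliminary normalization. This follows the discrete oscillation mechanism developed in \cite{daSLR20} and \cite{Tei16} and is tailored to match the homogeneity of the absorption term.

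First I would normalize. Since $u\in C^0$ and $z_0\in\partial\{u>0\}$, one has $u(z_0)=0$. Set
\[
\tilde{u}(x) \defeq \frac{u(z_0 + r_0 x)}{\kappa}, \qquad x\in B_1(0),
\]
with $\kappa \defeq \max\{1, \|u\|_{L^\infty(\Omega)}\}$ and $r_0>0$ chosen sufficiently small in terms of $\dist(\Omega',\partial\Omega)$, $\|f\|_{L^\infty(\Omega)}$, the structural parameters, and the universal constant $\delta$ produced by Lemma \ref{FlatLemma} for the value of $\eta$ fixed below. A scaling calculation along the lines of Remark \ref{SmallRegime} shows that $\tilde u$ satisfies an equation of the same form as \eqref{Maineq} with rescaled operator, modulating function, and source; the rescaled source picks up the prefactor $r_0^{p+2}/\kappa^{p+1-\mu}$ (coming from the exponent mismatch between $\mathcal{H}\cdot F$ and $u^\mu$), and since $\kappa\ge 1$ and $p+1-\mu>0$ this can be made $\le\delta$ by shrinking $r_0$. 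Altogether $\tilde u$ satisfies $0\le\tilde u\le 1$, $\tilde u(0)=0$, and falls into the scope of Lemma \ref{FlatLemma}.

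Next, fix
\[
\eta \defeq 1 - \left(\tfrac{1}{2}\right)^{\frac{p+2}{p+1-\mu}} \in (0,1),
\]
which is well-defined because $0<\mu<p+1$. I would prove by induction on $k\ge 0$ that
\[
\sup_{B_{2^{-k}}(0)} \tilde u \le (1-\eta)^k = 2^{-k(p+2)/(p+1-\mu)}.
\]
The case $k=0$ is trivial and $k=1$ is exactly Lemma \ref{FlatLemma} applied to $\tilde u$. For the inductive step, I introduce
\[
\tilde u_k(x) \defeq \frac{\tilde u(2^{-k} x)}{(1-\eta)^k},
\]
which by induction hypothesis satisfies $0\le\tilde u_k\le 1$, $\tilde u_k(0)=0$, and, by a scaling computation identical in spirit to the one of Step 1 (now with $r=2^{-k}$ and $s=(1-\eta)^k$), solves an equation of the form \eqref{1} with rescaled data. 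The rescaled source inherits a multiplicative factor $2^{-k(p+2)}/(1-\eta)^{k(p+1-\mu)}$, which by the very definition of $\eta$ equals $1$; hence the smallness bound $\le\delta$ is preserved at every scale. Applying Lemma \ref{FlatLemma} to $\tilde u_k$ and scaling back delivers the claim at level $k+1$. For arbitrary $r\in(0,1]$ a standard sandwich $2^{-(k+1)}<r\le 2^{-k}$ then yields $\sup_{B_r(0)}\tilde u\le 2^{(p+2)/(p+1-\mu)}\,r^{(p+2)/(p+1-\mu)}$, and undoing the normalization gives the claim with $\mathfrak{C}^\sharp$ depending on $r_0^{-(p+2)/(p+1-\mu)}$ and hence on the prescribed parameters.

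The main obstacle is the precise preservation of the hypotheses of Lemma \ref{FlatLemma} along the iteration: both the rescaled source and the rescaled modulating function must stay controlled. The choice of $\eta$ is forced exactly by the scaling invariance of the pair $(\mathcal{H}\cdot F,\, f u^\mu)$ at the decay rate $(p+2)/(p+1-\mu)$; any other exponent would either make the iterated source blow up or render the induction vacuous. For the coefficient $\mathfrak{a}_k(x)=((1-\eta)^k/2^{-k})^{q-p}\,\mathfrak{a}(2^{-k}x)$, the relevant prefactor equals $(2(1-\eta))^{k(q-p)}$, and since $1-\eta<1/2$ (because $(p+2)/(p+1-\mu)>1$) and $q\ge p$, this prefactor is at most $1$, so $\mathfrak{a}_k$ inherits the $L^\infty$ bound of $\mathfrak{a}$, and the structural conditions \eqref{1.2}--\eqref{1.3} propagate cleanly through the iteration.
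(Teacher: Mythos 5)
Your proposal is correct and is precisely the argument the paper has in mind: the authors explicitly state that the result follows "by applying Lemma \ref{FlatLemma} recursively in dyadic balls $B_{1/2^k}(0)$ with $\eta = 1-(1/2)^{(p+2)/(p+1-\mu)}$," citing \cite{daSLR20} and \cite{Tei16} for the details you filled in. Your scaling bookkeeping (source prefactor $\tau^{p+2}/\kappa^{p+1-\mu}$, exact cancellation $2^{-k(p+2)}/(1-\eta)^{k(p+1-\mu)}=1$, decay of the modulating coefficient because $2(1-\eta)<1$) is exactly what makes the iteration self-improving, and the normalization plus dyadic sandwich step are the standard closing moves.
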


 Thanks to Theorem \ref{IRThm} we are able to access better regularity estimates (at free boundary points) than those previously available. As a matter of fact, in such a result, the modulus of continuity improves upon the expected $C^{1, \beta}$ regularity coming from Theorem \ref{main1}. By way of comparison, if $F$ is under the assumptions of the Corollary \ref{Cormain1}, Theorem \ref{IRThm} address a sharp/improved modulus of continuity, at free boundary points, i.e.
$$
   \kappa(p, \mu) = \frac{p+2}{p+1-\mu} > 1+\frac{1}{p+1},  \quad \,\,\,\,(\text{sharp and improved  exponent}).
$$

In contrast with Corollary \ref{Cormain2} we also find the sharp/improved rate of gradient's decay at interior free boundary points (cf. \cite[Theorem 1.4]{daSLR20}).

\begin{corollary}[{\bf Sharp gradient's decay}]\label{IRresult2} Let $u$ be a bounded non-negative viscosity solution to \eqref{Maineq}. Then, for any point $z \in \partial \{u > 0\} \cap \Omega^{\prime}$ for $\Omega^{\prime} \Subset \Omega$, there exists a universal constant $C>0$ such that
$$
   \displaystyle \sup_{B_r(z)} |D u(x)| \leq  C\cdot r^{\frac{1+\mu}{p+1-\mu}} \quad \text{for all} \quad  0<r \ll \min\left\{1, \frac{\dist(\Omega^{\prime}, \partial \Omega)}{2}\right\}.
$$
\end{corollary}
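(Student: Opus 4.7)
The plan is to combine Theorem \ref{IRThm} with a scaling argument in the spirit of the proof of Corollary \ref{Cormain2}, but tuned precisely so that the absorption term $u^\mu$ on the right-hand side is absorbed by the chosen scaling exponent. Set $\kappa := \frac{p+2}{p+1-\mu}$ — the sharp growth rate delivered by Theorem \ref{IRThm} — and, for a free boundary point $z \in \partial\{u>0\} \cap \Omega'$ and $r>0$ small, define the rescaled function
$$
u_r(y) := \frac{u(z+ry)}{r^\kappa}, \qquad y \in B_1(0).
$$
The idea is that $u_r$ is uniformly bounded thanks to Theorem \ref{IRThm}, solves an equation of the same non-homogeneous doubly degenerate type on $B_1(0)$, and the interior gradient bound of Theorem \ref{GradThm} applied to $u_r$ will, upon unscaling, yield the desired decay with the precise exponent $\kappa-1 = \frac{1+\mu}{p+1-\mu}$.

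Using $Du(z+ry) = r^{\kappa-1} Du_r(y)$ and $D^2 u(z+ry) = r^{\kappa-2} D^2 u_r(y)$, and defining
$$
F_r(y, \mathrm{X}) := r^{2-\kappa} F\big(z+ry, r^{\kappa-2}\mathrm{X}\big), \qquad \mathfrak{a}_r(y) := r^{(\kappa-1)(q-p)}\mathfrak{a}(z+ry),
$$
a direct substitution into \eqref{Maineq} shows that $u_r$ satisfies, in the viscosity sense,
$$
\mathcal{H}_r(y, Du_r) F_r(y, D^2 u_r) \,=\, r^{\kappa\mu -(\kappa-1)p - \kappa + 2}\, f(z+ry)\, u_r^\mu \quad \text{in } B_1(0),
$$
where $\mathcal{H}_r$ is the $\mathcal{H}$-functional built from $\mathfrak{a}_r$. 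The exponent on $r$ equals $\kappa(\mu-p-1)+p+2 = -(p+2)+(p+2) = 0$ by the very definition of $\kappa$, so $u_r$ solves a structurally identical equation, with the same ellipticity constants and a source pointwise controlled by $\|f\|_{L^\infty(\Omega)}\|u_r\|_{L^\infty(B_1(0))}^\mu$.

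Now Theorem \ref{IRThm} gives $\|u_r\|_{L^\infty(B_1(0))} \leq \mathfrak{C}^\sharp \max\{1,\|u\|_{L^\infty(\Omega)}\}$, a universal constant. Applying the gradient estimate Theorem \ref{GradThm} to $u_r$ yields
$$
\|Du_r\|_{L^\infty\left(B_{1/2}(0)\right)} \leq C\Big(\|u_r\|_{L^\infty(B_1(0))} + 1 + \|f\|_{L^\infty(\Omega)}^{\frac{1}{p+1}}\|u_r\|_{L^\infty(B_1(0))}^{\frac{\mu}{p+1}}\Big) \leq C_0.
$$
Translating back through $|Du(z+ry)| = r^{\kappa-1}|Du_r(y)|$ and taking the supremum over $y \in B_{1/2}(0)$ — equivalently, $x \in B_{r/2}(z)$ — delivers $\sup_{B_{r/2}(z)}|Du| \leq C_0\, r^{(1+\mu)/(p+1-\mu)}$, and a trivial relabelling $r \mapsto 2r$ recovers the advertised estimate on $B_r(z)$. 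The only delicate step is the algebraic verification that the scaling exponent vanishes and that $F_r$, $\mathfrak{a}_r$ inherit (A0)--(A2), \eqref{1.2} and \eqref{1.3} with universal constants independent of $r$; both are settled by direct inspection and are the sole reason for the particular choice of $\kappa$.
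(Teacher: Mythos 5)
Your proof is correct, and it mirrors precisely the scaling argument the paper employs for Corollary \ref{Cormain2} (which the paper explicitly offers as the point of comparison for this corollary); the paper itself gives no written proof of Corollary \ref{IRresult2}, relegating it to the analogous technique together with the reference \cite[Theorem 1.4]{daSLR20}. Your algebraic check that the exponent $\kappa\mu - (\kappa-1)p - \kappa + 2$ vanishes for $\kappa = \tfrac{p+2}{p+1-\mu}$ is the crux, and the normalization of $u_r$ via Theorem \ref{IRThm} followed by the application of Theorem \ref{GradThm} is exactly the intended route.
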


Now, we show that the maximum of a solution within a blab of radius $0<r\lll 1$ does growth precisely as $r^{\frac{p+2}{p+1-\mu}}$.

\begin{theorem}[{\bf Non-degeneracy}]\label{LGR} Let $u$ be a nonnegative, bounded viscosity solution to \eqref{Maineq} in $B_1(0)$ with $f(x) \geq \mathfrak{m}>0$ and let $x_0 \in \overline{\{u >0\}} \cap B_{\frac{1}{2}}(0)$ be a point in the closure of the non-coincidence set. Then for any $0<r<\frac{1}{2}$, there holds
$$
   \displaystyle \sup_{\partial B_r(x_0)} \,u(x) \geq \mathfrak{C}\cdot r^{\frac{p+2}{p+1-\mu}},
$$
where $\displaystyle \mathfrak{C} = \mathfrak{C}(\mathfrak{m},\|\mathfrak{a}\|_{L^{\infty}(\Omega)}, L_1, N, \lambda, \Lambda, p, q, \mu, \Omega)>0$.
\end{theorem}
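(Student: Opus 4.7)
The plan is to build an explicit power-type barrier of sharp growth exponent $\beta_0 := \tfrac{p+2}{p+1-\mu}$ and exploit strict viscosity comparison with $u$. Set $\Theta_{x_0}(x):=\mathfrak{c}|x-x_0|^{\beta_0}$. A direct computation gives $|D\Theta_{x_0}|=\mathfrak{c}\beta_0|x-x_0|^{\beta_0-1}$ and that $D^2\Theta_{x_0}$ is positive-definite with eigenvalues comparable to $|x-x_0|^{\beta_0-2}$, so that $\mathcal{M}^+_{\lambda,\Lambda}(D^2\Theta_{x_0})\le \Lambda\mathfrak{c}\beta_0(\beta_0+N-2)|x-x_0|^{\beta_0-2}$. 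Using \eqref{1.2} together with the sharp algebraic identity $(p+1)\beta_0-p-2=\beta_0\mu$, this yields
\[
\mathcal{H}(x,D\Theta_{x_0})\,F(x,D^2\Theta_{x_0})\le \mathrm{A}_0\,\mathfrak{c}^{p+1}\bigl[1+\mathrm{A}_1\,\mathfrak{c}^{q-p}\bigr]|x-x_0|^{\beta_0\mu},
\]
for constants $\mathrm{A}_0,\mathrm{A}_1$ depending only on $N,\lambda,\Lambda,p,q,\mu,L_2,\|\mathfrak{a}\|_{L^\infty(\Omega)},\diam(\Omega)$; whereas $f(x)\Theta_{x_0}^\mu\ge \mathfrak{m}\mathfrak{c}^\mu|x-x_0|^{\beta_0\mu}$. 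Since $p+1-\mu>0$, I can fix $\mathfrak{c}=\mathfrak{c}(\mathfrak{m},\|\mathfrak{a}\|_{L^\infty(\Omega)},L_1,L_2,N,\lambda,\Lambda,p,q,\mu,\Omega)>0$ so small that $\mathrm{A}_0\mathfrak{c}^{p+1-\mu}(1+\mathrm{A}_1\mathfrak{c}^{q-p})<\mathfrak{m}$, making $\Theta_{x_0}$ a classical strict viscosity supersolution of \eqref{Maineq} on $B_r(x_0)\setminus\{x_0\}$.

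Next, first suppose $u(x_0)>0$ and assume for contradiction that $\sup_{\partial B_r(x_0)}u<\mathfrak{c}\,r^{\beta_0}$; then $u<\Theta_{x_0}$ on $\partial B_r(x_0)$. The operator $\Phi(x,w,p,X):=\mathcal{H}(x,p)F(x,X)-f(x)w^\mu$ is degenerate elliptic in $X$ (because $\mathcal{H}\ge 0$ and $F$ is elliptic) and nonincreasing in $w$ on $[0,\infty)$ (because $f\ge 0$ and $\mu>0$), hence proper. A strict-form comparison principle in the spirit of Lemma \ref{comparison principle}, extended to the multiplicative source term via the standard doubling-of-variables argument, then yields $u\le\Theta_{x_0}$ throughout $B_r(x_0)$. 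Evaluating at $x_0$ produces $0<u(x_0)\le\Theta_{x_0}(x_0)=0$, a contradiction, so $\sup_{\partial B_r(x_0)}u\ge\mathfrak{c}\,r^{\beta_0}$ in this case.

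For $x_0\in\partial\{u>0\}$ with $u(x_0)=0$, I pick a sequence $(y_k)\subset\{u>0\}\cap B_{1/2}(0)$ with $y_k\to x_0$. The previous step applied at each $y_k$ (valid for large $k$, where $B_r(y_k)\subset B_1(0)$) delivers $\sup_{\partial B_r(y_k)}u\ge\mathfrak{c}\,r^{\beta_0}$. Since $u$ is continuous and $\partial B_r(y_k)\to\partial B_r(x_0)$ in Hausdorff distance, passing to the limit as $k\to\infty$ gives $\sup_{\partial B_r(x_0)}u\ge\mathfrak{c}\,r^{\beta_0}$. Setting $\mathfrak{C}:=\mathfrak{c}$ then completes the argument.

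The main obstacle is the comparison step: Lemma \ref{comparison principle} is stated only for separable source terms of the form $h(w)+f(x)$, whereas ours carries the multiplicative term $f(x)w^\mu$. Nevertheless, the monotonicity of $w\mapsto f(x)w^\mu$ on $[0,\infty)$ keeps $\Phi$ proper, and comparison against a classical strict supersolution goes through by the usual Crandall--Ishii--Lions doubling-of-variables technique; this structural feature is precisely what allows one to bypass the $\varepsilon$-shift of the solution employed in the proof of Theorem \ref{main3}, replacing it by the continuity/limit step above.
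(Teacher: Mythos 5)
Your proposal is correct and follows essentially the same route as the paper: an explicit power-type barrier $\mathfrak{c}|x-x_0|^{\frac{p+2}{p+1-\mu}}$, verified to be a strict supersolution by the same eigenvalue computation and the same algebraic cancellation $(p+1)\beta_0-(p+2)=\beta_0\mu$, followed by comparison. The paper phrases this after a rescaling to $B_1(0)$ with barrier $\Xi(x)=\mathfrak{C}|x|^{\beta_0}$ and then invokes the proof of Theorem \ref{main3}; you work directly in $B_r(x_0)$ with $\Theta_{x_0}$, which is an equivalent parametrisation and in fact avoids the extra bookkeeping. Your passage from $x_0\in\{u>0\}$ to $x_0\in\overline{\{u>0\}}$ by continuity is exactly the implicit limit step the paper leaves to the reader.

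The one place where you genuinely add value is your observation that Lemma \ref{comparison principle} is stated for an additive source $h(w)+f(x)$ while \eqref{Maineq} carries the multiplicative source $f(x)w_+^\mu$; the paper's proof sketch quietly applies the lemma outside its stated scope. Your remark that the map $w\mapsto f(x)w_+^\mu$ is nondecreasing with $f\ge 0$ (so the full operator remains proper in the paper's sign convention) is the correct structural reason why the comparison argument of \cite{BD0}, \cite{BD2} extends to this form, and it is a worthwhile clarification. Two small cosmetic points: (i) since one side of the comparison is a classical (strict) supersolution, the Crandall--Ishii--Lions doubling-of-variables machinery you invoke is heavier than necessary away from the vertex --- one can test $u$ directly with $\Theta_{x_0}+\max(u-\Theta_{x_0})$; (ii) note that $\Theta_{x_0}$ is only $C^1$, not $C^2$, at the vertex $x_0$ when $\beta_0<2$, so the comparison must be understood in the viscosity/doubling sense in a full neighbourhood of $x_0$ rather than by classical touching there --- the same point is present, and likewise left implicit, in the paper's proof of Theorem \ref{main3}.
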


\begin{proof}
 Firstly, for $x_0 \in \{u>0\} \cap \Omega^{\prime}$ let us define the scaled function
$$
   u_r(x) \defeq \frac{u(x_0+rx)}{r^{\frac{p+2}{p+1-\mu}}} \quad  \text{for} \quad  x \in B_1(0).
$$
Straightforward calculus shows that
$$
\mathcal{H}_r(x, D u_r)F_r(x,D^2 u_r)\ge  f_r(x)\cdot (u_r)_{+}^{\mu}(x)   \quad \text{in} \quad B_1(0)
$$
in the viscosity sense, where
$$
\left\{
\begin{array}{rcl}
  F_{r}(x, \mathrm{X}) & \defeq & r^{\frac{p-2 \mu}{p+1-\mu}}F\left(x_0+rx, r^{-\frac{p-2 \mu}{p+1-\mu}}\mathrm{X}\right) \\
  \mathcal{H}_{r}(x, \xi ) & \defeq & r^{-\frac{p(\mu+1)}{p+1-\mu}}\mathcal{H}\left(x_0+rx, r^{\frac{\mu +1}{p+1-\mu}}\xi\right)\\
  \mathfrak{a}_{r}(x) & \defeq & r^{\frac{(q-p)(\mu+1)}{p+1-\mu}}\mathfrak{a}(x_0+rx)\\
  f_r (x) & \defeq & f(x_0 +rx).
\end{array}
\right.
$$
Now, let us introduce the auxiliary function:
$$
	\displaystyle \Xi (x) \defeq \mathfrak{C}\cdot |x|^{\frac{p+2}{p+1-\mu}},
$$
where the constant $\mathfrak{C}>0$ will be chosen in such a way that
$$
	\mathcal{H}(x, D \Xi)F(x, D^2 \Xi) \leq f(x)\cdot \Xi^{\mu}(x) \quad \textrm{in} \quad B_1(0)
$$
At this point, the conclusion can easily be obtained following the lines of the proof of Theorem \ref{main3} by using the Comparison Principle (Lemma \ref{comparison principle}).
\end{proof}

As a consequence of Theorems \ref{IRThm} and \ref{LGR}, we conclude that the non-coincidence set $\{u>0\}$ has uniform positive density and it is a porous set. Particularly, such a result implies that touching ground boundary cannot develop cusp points.

\begin{corollary}\label{Cor4.2}
Let $u$ be a nonnegative, bounded viscosity solution to \eqref{Maineq} in $B_1(0)$ and $x_0 \in \partial \{u>0\} \cap B_{1/2}(0)$ be a free boundary point. Then for any $0 < r < 1/2$,
$$
	\frac{\mathcal{L}^N\left( B_{r}(x_0)\cap \{u>0\}\right)}{\mathcal{L}^N\left( B_{r}(x_0)\right)} \ge \theta,
$$
for a constant $\theta=\theta(N,\lambda,\Lambda,p,q,\|f\|_{\infty}, \mu) >0$. There further exists a universal constant $\epsilon= \epsilon(N,\lambda,\Lambda, \mu) >0$ such that
$$
	\mathscr{H}^{N-\epsilon} \left(\partial \{u>0\} \cap B_{\frac{1}{2}}(0)\right) < \infty.
$$
Particularly, the free boundary has zero Lebesgue measure (see, \cite{Zaj}).
\end{corollary}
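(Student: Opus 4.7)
The plan is to combine the two-sided pointwise bounds of Theorem \ref{IRThm} (upper growth) and Theorem \ref{LGR} (non-degeneracy) to locate, inside every ball centred at a free boundary point, a concentric sub-ball of comparable radius that is entirely contained in the positivity set $\{u>0\}$. This geometric obstruction both yields the positive density estimate and certifies that $\partial\{u>0\}$ is a porous set, from which the Hausdorff dimension bound follows by a classical result.

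More concretely, fix $x_0 \in \partial\{u>0\}\cap B_{1/2}(0)$ and $0<r<1/2$. By the non-degeneracy estimate (Theorem \ref{LGR}) there exists $y_r\in \partial B_{r/2}(x_0)$ with
$$
     u(y_r) \;\geq\; \mathfrak{C}\cdot (r/2)^{\frac{p+2}{p+1-\mu}}.
$$
On the other hand, the improved growth estimate of Theorem \ref{IRThm} shows that if $z\in \partial\{u>0\}$ lies within distance $\sigma r$ of $y_r$ (for some $\sigma>0$ to be chosen), then
$$
    u(y_r) \;\leq\; \mathfrak{C}^{\sharp}\cdot\max\{1,\|u\|_{L^\infty(\Omega)}\}\cdot(\sigma r)^{\frac{p+2}{p+1-\mu}}.
$$
Choosing $\sigma=\sigma(\mathfrak{C},\mathfrak{C}^\sharp,\|u\|_{L^\infty},p,\mu)>0$ so small that the right-hand side is strictly smaller than the lower bound above forces $B_{\sigma r}(y_r)\cap \partial\{u>0\}=\emptyset$. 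Since $u\geq 0$ and $y_r$ lies in the positivity set, connectedness implies $B_{\sigma r}(y_r)\subset \{u>0\}$, and this inclusion, together with $B_{\sigma r}(y_r)\subset B_{r}(x_0)$, yields
$$
 \mathcal{L}^{N}(B_r(x_0)\cap\{u>0\})\;\geq\; \mathcal{L}^{N}(B_{\sigma r}(y_r))\;=\;\sigma^N\mathcal{L}^{N}(B_r(x_0)),
$$
which is the desired uniform density with $\theta=\sigma^N$.

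The same construction produces a ball $B_{\sigma r}(y_r)\subset B_r(x_0)\setminus \partial\{u>0\}$ for every free boundary point $x_0$ and every scale $r\in(0,1/2)$; by definition this means $\partial\{u>0\}\cap B_{1/2}(0)$ is a $\sigma$-porous set in $\mathbb{R}^N$. The main subtlety here is making sure the constant $\sigma$ is truly universal, i.e. that both Theorem \ref{IRThm} and Theorem \ref{LGR} deliver bounds with the same homogeneity exponent $\tfrac{p+2}{p+1-\mu}$ and constants depending only on the admissible parameters; this is precisely what the preceding section has established.

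To close the proof, I invoke the classical result (see for instance the references to Zajíček cited in the statement, as well as the work of Matoušek/Luukkainen) asserting that any $\sigma$-porous subset of $\mathbb{R}^N$ has Hausdorff dimension at most $N-\epsilon$ for some $\epsilon=\epsilon(N,\sigma)>0$. Since $\sigma$ depends only on $N,\lambda,\Lambda,\mu$ (via $\mathfrak{C},\mathfrak{C}^\sharp$) we obtain $\mathscr{H}^{N-\epsilon}(\partial\{u>0\}\cap B_{1/2}(0))<\infty$, hence $\mathcal{L}^{N}(\partial\{u>0\}\cap B_{1/2}(0))=0$. The main obstacle is purely bookkeeping: tracking the dependence of $\sigma$ on the various universal constants and confirming that the positivity of $\sigma$ does not degenerate when the parameters $p,q,\mu$ approach the endpoints of their admissible ranges; once this is verified, the density estimate and porosity follow from the two preceding theorems in a single stroke.
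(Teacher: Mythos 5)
Your argument is correct and is essentially the standard density-plus-porosity proof that the paper itself defers to (the text simply cites Corollaries~5.2 and 5.4 of \cite{daSLR20}). The mechanism you use — picking $y_r\in\partial B_{r/2}(x_0)$ by non-degeneracy (Theorem~\ref{LGR}), then ruling out nearby free boundary points via the upper growth bound (Theorem~\ref{IRThm}) since both estimates carry the same homogeneity $\frac{p+2}{p+1-\mu}$, and finally promoting the resulting ball $B_{\sigma r}(y_r)$ to a subset of $\{u>0\}$ by continuity of $u\geq 0$ — is exactly the one from that reference, and it cleanly produces the positive density $\theta=\sigma^N$ and the $\sigma$-porosity of $\partial\{u>0\}$, whence the $\mathscr{H}^{N-\epsilon}$-finiteness from Zaj\'{i}\v{c}ek-type results.

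One bookkeeping remark worth flagging (which you yourself half-noticed): the $\sigma$ you extract depends on the constants $\mathfrak{C}$ and $\mathfrak{C}^{\sharp}\max\{1,\|u\|_{L^\infty}\}$ and on the exponent $\frac{p+2}{p+1-\mu}$, hence a priori on $\|u\|_{L^\infty}$ and on $p,q$ as well as $N,\lambda,\Lambda,\mu$; so the dependence of $\epsilon$ "only on $N,\lambda,\Lambda,\mu$" is not literally what falls out of the argument. This discrepancy is present in the paper's own statement as well (it attributes $p,q,\|f\|_\infty$-dependence to $\theta$ but not to $\epsilon$), so it is not a gap specific to your proof — but it does deserve the explicit acknowledgement you gave it, since the porosity-to-Hausdorff-dimension step $\epsilon=\epsilon(N,\sigma)$ inherits whatever $\sigma$ depends on. Also, you should verify (or simply restrict to $r$ small relative to $1-|x_0|$) that the points $z\in\partial\{u>0\}$ to which Theorem~\ref{IRThm} is applied remain in a fixed compact subset $\Omega'\Subset B_1(0)$, so that the constants in that theorem are indeed uniform; this is routine but slightly imprecise in both the proposal and the corollary as stated.
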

\begin{proof}
The proof follows as the one in \cite[Corollaries 5.2 and 5.4]{daSLR20}.
\end{proof}

An interesting open issue consists in knowing whether the result of Corollary \ref{Cor4.2} could be proved in terms of the intrinsic Hausdorff measures introduced in \cite{DeFM19I}. In future work, we intend to address this question.

\subsection{Obstacle type problems}

The main purpose of this section is to comment on sharp regularity for solutions of obstacle type problems driven by our class of operators. More precisely, we consider viscosity solutions of
\begin{equation} \label{Eq1}
\left\{
\begin{array}{rclcl}
 \mathcal{H}(x,D u).F(x,D^2 u) & = & f(x)\cdot \chi_{\{u>\phi\}} & \text{in} & B_{1}(0) \\
  u(x) & \ge & \phi(x) & \text{on} &  B_{1}(0)\\
  u(x) &=& g(x) & \text{on} & \partial B_1(0),
\end{array}
\right.
\end{equation}
with $\phi \in C^{1,1}(B_1(0))$, $f \in L^{\infty}(B_1) \cap C^0(B_1(0))$ and $g$ is a continuous boundary datum.  We prove that they are $C^{1, \frac{1}{p+1}}(B_{1/2}(0))$.  In this context, obstacle type problems have been studied in recent years in \cite{DSVII} (see also \cite{DSVI} for another class of obstacle problems of degenerate type), where were proved $C^{1, \alpha}$ estimates for a class of degenerate fully nonlinear operators as follows
$$
\left\{
\begin{array}{rclcl}
 |D u|^pF(x,D^2 u) & = & f(x)\cdot \chi_{\{u>\phi\}} & \text{in} & B_{1}(0) \\
  u(x) & \ge & \phi(x) & \text{on} &  B_{1}(0)\\
  u(x) &=& g(x) & \text{on} & \partial B_1(0),
\end{array}
\right.
$$

It is worth noting that existence/uniqueness assertions of a viscosity solution to Dirichlet problem \eqref{Eq1} follow by Perron's method combined with penalization techniques (see, \cite[Theorem 1.1]{DSVI} and \cite[Appendix]{DSVII}).

Finally, by making use of the ideas in \cite[Theorem 1.3]{DSVII}, we are in a position to state the following result:

\begin{theorem}[{\bf Regularity along free boundary points}]\label{T1}
Suppose that the assumption (A0)-(A2), \eqref{1.2} and \eqref{1.3} are in force for a convex or concave operator $F$. Let $u$ be a bounded viscosity solution to \eqref{Eq1} with obstacle $\phi \in C^{1,1}(B_1(0))$ and $f \in L^{\infty}(B_1(0))\cap C^0(B_1(0))$. Then, $u \in C^{1,\frac{1}{p+1} }(B_{1/2}(0))$, along free boundary points. More precisely, for any point $x_0 \in \partial \{u > \phi\} \cap B_{1/2}(0)$ there holds
$$
	[u]_{C^{1, \frac{1}{p+1}}(B_r(x_0))} \le C\cdot \left[\|u\|_{L^{\infty}(B_1(0))} + \left(\|\phi\|_{C^{1, 1}(B_1(0))}^{p+1}+\|f\|_{L^{\infty}(B_1(0))}\right)^{\frac{1}{p+1}}\right]	,
$$
for $0<r < \frac{1}{2}$ where $C>0$ is a universal constant.
\end{theorem}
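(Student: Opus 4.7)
The plan is to adapt the geometric tangential iteration of Theorem \ref{main1} to the obstacle setting. Fix a free boundary point $x_0 \in \partial\{u>\phi\}\cap B_{1/2}(0)$. Since $f\chi_{\{u>\phi\}}\in L^\infty$, Theorem \ref{main1} already gives $u\in C^1$; combining this with $u\geq\phi$ and $u(x_0)=\phi(x_0)$ forces $D u(x_0)=D\phi(x_0)$. Set
$$
\mathfrak{l}_{x_0}(x) \defeq \phi(x_0)+D\phi(x_0)\cdot(x-x_0).
$$
The goal reduces to proving
$$
\sup_{B_r(x_0)}|u-\mathfrak{l}_{x_0}| \leq C\cdot r^{1+\frac{1}{p+1}}
$$
for all $0<r<1/2$, with $C$ as announced. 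The one-sided bound from below is cheap: from $u\geq\phi$ and $\phi\in C^{1,1}$,
$$
u(x)-\mathfrak{l}_{x_0}(x)\;\geq\;\phi(x)-\mathfrak{l}_{x_0}(x)\;\geq\;-\tfrac{1}{2}\|\phi\|_{C^{1,1}(B_1(0))}|x-x_0|^2,
$$
which is stronger than the required exponent since $1+\tfrac{1}{p+1}\leq 2$ and $|x-x_0|\leq 1$.

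For the matching upper bound, I would rescale
$$
v_r(x) \defeq \frac{u(x_0+rx)-\mathfrak{l}_{x_0}(x_0+rx)}{r^{1+\frac{1}{p+1}}}, \qquad \phi_r(x) \defeq \frac{\phi(x_0+rx)-\mathfrak{l}_{x_0}(x_0+rx)}{r^{1+\frac{1}{p+1}}}.
$$
Exactly as in Remark \ref{SmallRegime}, $v_r$ satisfies a rescaled obstacle problem with modified data $(\mathcal{H}_r, F_r, \mathfrak{a}_r, f_r)$ that falls into the small-parameter regime as $r\to 0^+$. The crucial point is that, by the $C^{1,1}$ regularity of $\phi$,
$$
\|\phi_r\|_{L^\infty(B_1(0))}\;\leq\; C\|\phi\|_{C^{1,1}(B_1(0))}\cdot r^{1-\frac{1}{p+1}}\;=\;o(1),
$$
so the obstacle degenerates to $0$ in the blow-up limit. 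An approximation lemma in the spirit of Lemma \ref{lem.flat}, adapted to the obstacle problem, then identifies the $C^1$-limiting profile $\mathfrak{h}$ as a solution of the frozen homogeneous equation $F_0(D^2\mathfrak{h})=0$; the concavity/convexity of $F$ promotes $\mathfrak{h}$ to the $C^{1,1}$ class, which is precisely the $\alpha_{\mathrm{F}}=1$ input needed to drive the sharp exponent $\beta=1/(p+1)$.

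Once this approximation is in hand, the inductive scheme of Lemmas \ref{lem.firststep}--\ref{l3.3} carries over almost verbatim: the single-step decay stems from the $C^{1,1}$ bound on $\mathfrak{h}$, while the induction propagates the sharp exponent through dyadic scales, matching tangent planes at each one. Assembling the resulting upper bound with the obstacle-induced lower bound and invoking the Dini--Campanato characterization of \cite{Kov99} yields $u\in C^{1,1/(p+1)}$ at $x_0$. Tracking the normalization in Remark \ref{SmallRegime}, the $C^{1,1}$-norm of $\phi$ enters the estimate via the choice $\kappa \sim \|u\|_{L^\infty}+(\|\phi\|_{C^{1,1}}^{p+1}+\|f\|_{L^\infty})^{1/(p+1)}$, reproducing the structure of the announced bound.

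The hard part will be verifying, at finite scales, that the obstacle does not contaminate the approximation step: although $\phi_r$ tends uniformly to $0$, it is never identically zero at a fixed scale, so one must either run the $C^1$-approximation directly for the obstacle problem (exploiting $v_r\geq\phi_r$ with $\|\phi_r\|_{L^\infty}\to 0$ and the stability of viscosity solutions as in \cite[Corollary 2.7 and Remark 2.8]{BD2}), or compare $v_r$ to a reference obstacle solution and check that the discrepancy is absorbable into the $\iota$-tolerance of Lemma \ref{lem.flat}. Either route requires careful bookkeeping of the competing scales $r^{1-1/(p+1)}$ (obstacle decay) and $\rho^k$ (iterative decay), but is standard for fully nonlinear obstacle problems with convex or concave second-order part.
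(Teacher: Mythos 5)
The paper itself does not supply a proof of Theorem \ref{T1}; it simply invokes \cite[Theorem 1.3]{DSVII}, so there is no in-text argument to compare with line by line. Your proposal is in the right spirit (geometric iteration after a blow-up at the free boundary point, with the obstacle contribution vanishing in the limit), but there is a structural gap you do not address and a secondary issue with the way you describe the iteration.

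The structural gap: your rescaling $v_r(x)=\big(u(x_0+rx)-\mathfrak{l}_{x_0}(x_0+rx)\big)/r^{1+1/(p+1)}$ subtracts the linear plane $\mathfrak{l}_{x_0}$ whose gradient is $D\phi(x_0)$. But the operator $v\mapsto\mathcal{H}(x,Dv)F(x,D^2 v)$ is emphatically \emph{not} invariant under subtracting affine functions (this is spelled out in the introduction of this very paper). Writing out the PDE for $v_r$ gives
$$
\mathcal{H}\bigl(x_0+rx,\, D\phi(x_0)+r^{1/(p+1)}Dv_r(x)\bigr)\,F\bigl(x_0+rx,\, r^{-p/(p+1)}D^2 v_r(x)\bigr)=f(x_0+rx)\chi_{\{u>\phi\}},
$$
so the degenerate weight $\mathcal{H}$ is evaluated at a \emph{shifted} gradient. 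This is not ``exactly as in Remark \ref{SmallRegime}'', where no plane is removed. When $D\phi(x_0)\neq 0$ the rescaled problem is simply not of the same type as the original (and in fact for small $r$ the term $\mathcal{H}(\cdot,\,D\phi(x_0)+r^{1/(p+1)}Dv_r)$ is bounded away from zero, i.e.\ the equation becomes uniformly elliptic, and one should invoke classical obstacle theory for uniformly elliptic operators, which yields $C^{1,1}$ at such points). Your argument only works as written in the case $D\phi(x_0)=0$, where $\mathfrak{l}_{x_0}$ reduces to a constant. A correct proof must split into $D\phi(x_0)=0$ and $D\phi(x_0)\neq 0$, parallel to the dichotomy $0\in\mathcal{S}_{r,\beta}$ versus $0\notin\mathcal{S}_{r,\beta}$ in the proof of Theorem \ref{main1}.

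Two secondary points. First, you describe the iteration as ``matching tangent planes at each one''; this contradicts the paper's own mechanism, whose entire purpose is to \emph{avoid} iterating tangent planes (Lemma \ref{lem.dy} tracks $u(x)-u(0)$ plus a gradient accumulation, not a sequence $\mathfrak{l}_k$), exactly because $\mathcal{H}F$ is not affine-invariant. Your sketch should track the raw oscillation as in Lemmas \ref{lem.firststep}--\ref{l3.3} rather than re-introducing the affine-subtraction scheme. Second, the $C^1$ approximation lemma adapted to the obstacle problem (with $\|\phi_r\|_{L^\infty}\to 0$) is indeed the hard step, and you rightly flag it, but it must also be compatible with the splitting above: the compactness argument needs a uniform $L^\infty$ bound on $v_r$, which, at the initial scale, is not a consequence of what you have written but must be seeded by the normalization of Remark \ref{SmallRegime} (taking $\kappa\sim\|u\|_{L^\infty}+(\|\phi\|_{C^{1,1}}^{p+1}+\|f\|_{L^\infty})^{1/(p+1)}$) before the induction starts.
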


In contrast with Theorems \ref{main3} and \ref{LGR}, we are able to prove the following non-degeneracy result. The proof makes use the same ideas as in \cite[Theorem 1.7]{DSVI} and  \cite[Theorem 1.7]{DSVII}. Thus, we will omit it here.

\begin{theorem}[{\bf Non-degeneracy property}]\label{ThmNon-Deg} Suppose that the assumptions of Theorem \ref{T1} are in force. Let $u$ be a bounded a viscosity solution to the obstacle problem \eqref{Eq1} with source term satisfying $\displaystyle \inf_{B_1(0)}f(x) \defeq \mathfrak{m}>0$. Given $x_0 \in \{u>\phi\} \cap \Omega^{\prime}$, then there holds
$$
  \displaystyle \sup_{\partial B_r(x_0)} (u(x)-\phi(x_0))\geq \mathfrak{c}\cdot r^{\frac{p+2}{p+1}}\quad \text{for all} \quad  0<r \ll 1.
$$
for a constant $\mathfrak{c} = \mathfrak{c}(\mathfrak{m},\|\mathfrak{a}\|_{L^{\infty}(\Omega)}, L_1, N, \lambda, \Lambda, p,q, \Omega^{\prime})>0$.
\end{theorem}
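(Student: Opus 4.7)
The plan is to adapt the barrier/comparison strategy underlying the proof of Theorem \ref{main3} to the obstacle setting. The key preliminary observation is that $\{u>\phi\}$ is open by continuity of $u-\phi$, so the hypothesis $x_0\in\{u>\phi\}\cap\Omega^{\prime}$ yields some radius $\rho_0=\rho_0(u,\phi,x_0)>0$ with $B_{\rho_0}(x_0)\subset\{u>\phi\}$. Inside this ball the obstacle is inactive and $u$ is a bona fide viscosity solution of $\mathcal{H}(x,Du)F(x,D^2u)=f(x)$, so the machinery behind Theorem \ref{main3}, and in particular the Comparison Principle (Lemma \ref{comparison principle}), can be applied cleanly.

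Following the proof of Theorem \ref{main3}, I would then introduce the barrier $\Theta(x)\defeq \mathfrak{c}\,|x|^{\frac{p+2}{p+1}}$ and select $\mathfrak{c}>0$ as the smallest positive root of an explicit analytic function built from $\mathfrak{m}$, $\|\mathfrak{a}\|_{L^{\infty}(\Omega)}$, $L_1$, $\lambda,\Lambda,N,p,q$ and $\diam(\Omega^{\prime})$, exactly as in that proof. This guarantees the strict pointwise supersolution inequality $\mathcal{H}(x,D\Theta)F(x,D^2\Theta)<\mathfrak{m}\leq f(x)$ in a ball around $x_0$. The decisive feature of the exponent $\frac{p+2}{p+1}$ is that $\Theta$ is self-similar under the rescaling $x\mapsto r^{-1}x$ together with the amplitude factor $r^{-\frac{p+2}{p+1}}$, so $\Theta$ remains a strict supersolution of any naturally rescaled version of the equation.

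The heart of the argument is a scaling/contradiction step. For $r\in(0,\rho_0)$ and an auxiliary parameter $\varepsilon>0$, set $u_{r,x_0}(x)\defeq (u(x_0+rx)-\phi(x_0)+\varepsilon)\,r^{-\frac{p+2}{p+1}}$ for $x\in B_1(0)$. Since $B_r(x_0)\subset\{u>\phi\}$, the rescaled $u_{r,x_0}$ is a viscosity solution of $\mathcal{H}_{r,x_0}(x,Du_{r,x_0})F_{r,x_0}(x,D^2u_{r,x_0})=f_{r,x_0}(x)$ on $B_1(0)$, with the same scaled objects $F_{r,x_0},\mathcal{H}_{r,x_0},\mathfrak{a}_{r,x_0},f_{r,x_0}$ appearing in the proof of Theorem \ref{main3}, and in particular a subsolution of the same equation. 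Assume for contradiction that $u_{r,x_0}\leq \Theta$ on $\partial B_1(0)$. Applying Lemma \ref{comparison principle} with $\Theta$ as strict supersolution and $u_{r,x_0}$ as subsolution on $B_1(0)$ would force $u_{r,x_0}\leq\Theta$ in $B_1(0)$; evaluating at the origin this gives $(u(x_0)-\phi(x_0)+\varepsilon)\,r^{-\frac{p+2}{p+1}}\leq 0$, which is impossible since $u(x_0)>\phi(x_0)$ and $\varepsilon>0$. Hence there must exist $z\in\partial B_1(0)$ with $u_{r,x_0}(z)>\Theta(z)=\mathfrak{c}$; unscaling and letting $\varepsilon\to 0^{+}$ yields $\sup_{\partial B_r(x_0)}(u-\phi(x_0))\geq \mathfrak{c}\,r^{\frac{p+2}{p+1}}$, as desired.

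The main obstacle I anticipate is the bookkeeping required to verify that $\frac{p+2}{p+1}$ is exactly the exponent which preserves the strict supersolution property of $\Theta$ under this natural rescaling, uniformly in $r$, together with checking that the scaled source $f_{r,x_0}$ and coefficient $\mathfrak{a}_{r,x_0}$ stay under control (in particular $f_{r,x_0}\geq \mathfrak{m}$ pointwise, so that $\Theta$ keeps strictly dominating the rescaled right-hand side). A secondary technical point specific to the obstacle setting is the a priori dependence of $\rho_0$ on the obstacle $\phi$; for the statement at hand, which only claims non-degeneracy for $0<r\ll 1$, this is harmless, though a $\rho_0$ independent of $\phi$ would require a refined barrier incorporating the $C^{1,1}$ perturbation $\phi(x)-\phi(x_0)-D\phi(x_0)\cdot(x-x_0)$.
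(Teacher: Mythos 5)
Your barrier-and-comparison scheme is the right one, and it is identical in spirit to the paper's own proof of Theorem~\ref{main3}, which the authors explicitly cite as the template (together with~\cite{DSVI},~\cite{DSVII}) before omitting the details. The self-similar exponent $\frac{p+2}{p+1}$, the choice of $\mathfrak{c}$ as the smallest root of an analytic function in $\mathfrak{m}$, $\|\mathfrak{a}\|_{L^\infty}$, $L_1$, $\lambda$, $\Lambda$, $N$, $p$, $q$, $\diam(\Omega')$, the rescaled data $F_{r,x_0}$, $\mathcal{H}_{r,x_0}$, $\mathfrak{a}_{r,x_0}$, $f_{r,x_0}$ and the contradiction at $x=0$ all match the computations done for Theorem~\ref{main3} verbatim. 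The bookkeeping you flag (that $\kappa=\frac{p+2}{p+1}$ makes $|D\Theta|^p|D^2\Theta|$ of unit order, while the $q$-phase picks up a harmless factor $r^{\frac{q-p}{p+1}}$, and that $f_{r,x_0}=f(x_0+r\cdot)\geq\mathfrak m$ is preserved) is correct and unproblematic.

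The genuine weak point is exactly the one you hedge on at the end: by invoking comparison on the full ball $B_1(0)$ you need the rescaled $u_{r,x_0}$ to solve the PDE there, which forces $B_r(x_0)\subset\{u>\phi\}$ and hence $r<\operatorname{dist}(x_0,\partial\{u>\phi\})$. This produces a non-degeneracy estimate whose admissible range of radii collapses as $x_0$ approaches the free boundary, which is precisely where such estimates are used. The way to remove this restriction, and the point of citing the obstacle-problem references, is to apply the comparison principle on the \emph{open set} $D_r\defeq\{u>\phi\}\cap B_r(x_0)$ rather than on $B_r(x_0)$ itself. On $\partial D_r\cap\partial B_r(x_0)$ your argument is unchanged; on $\partial D_r\cap\partial\{u>\phi\}$ one has $u=\phi$, and there one must verify that the comparison function $\phi(x_0)+\Theta(x-x_0)$ dominates. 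When $D\phi(x_0)=0$ this is immediate from $\phi\in C^{1,1}$, since
$\phi(x)-\phi(x_0)\leq\tfrac12\|\phi\|_{C^{1,1}}|x-x_0|^2\leq\mathfrak{c}|x-x_0|^{\frac{p+2}{p+1}}$
for $|x-x_0|$ small because $2>\frac{p+2}{p+1}$ (so the $C^{1,1}$ perturbation is lower order against the barrier). When $D\phi(x_0)\neq 0$, the estimate is trivial by sliding in the direction of $D\phi(x_0)$: $u(x_0+re)-\phi(x_0)\geq\phi(x_0+re)-\phi(x_0)\geq \tfrac12 r|D\phi(x_0)|\gg r^{\frac{p+2}{p+1}}$. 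Either way one obtains the conclusion for all $r$ up to a scale independent of $x_0$. Your proposal would be complete once this restriction of the comparison domain is incorporated; as written it only proves a pointwise, $x_0$-dependent version of the statement.
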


\section{Further applications}\label{Sec.FurtApplic}

We will connect our results to several known scenarios, which in some extent, we retrieve or extend.

\subsection{Doubly degenerate $p-$Laplacian in non-divergence form}

We would like to highlight that other interesting class of degenerate operators where our results work out is given by the double degenerate $p-$Laplacian type operator, in its non-divergence form, for $2<p\leq q< \infty$:
     $$
         \mathcal{G}_{p, q}(x, \xi, X) = \mathcal{H}_{p, q}(x, \xi)F_p(\xi, X)
     $$
where
   $$
     \mathcal{H}_{p, q}(x, \xi)\defeq |\xi|^{p-2}+ \mathfrak{a}(x)|\xi|^{q-2}
   $$
and
$$
     F_p(\xi, X) \defeq  \tr\left[\left(\textrm{Id}_N+(p-2)\frac{\xi\otimes \xi}{|\xi|^2}\right)X\right].
$$
is the Normalized $p-$Laplacian operator. At this point, notice that for an arbitrary $\nu \in \mathbb{S}^{N-1}$ we have
$$
\begin{array}{ccc}
  \left\langle \textrm{Id}_N + (p-2)\frac{Du \otimes Du}{|Du|^2}\nu, \nu\right\rangle & = & \displaystyle |\nu|^2 + (p-2)\frac{\langle \nu, Du \rangle^2 }{|Du|^2}\\
   & = & \displaystyle 1 + (p-2)\frac{\langle \nu, Du \rangle^2 }{|Du|^2}.
\end{array}
$$
Therefore, we conclude that $F_p$ satisfies assumption (A1) with
$$
   \lambda = \min\{p-1, 1\} \quad \mbox{and}\ \quad \Lambda = \max\{p-1, 1\}.
$$

In its simplest non-variational form, namely,
$$
   \Delta^N_p u \defeq |D u|^{2-p}\text{div}(|D u|^{p-2}D u) = \Delta u + (p-2)\left\langle D^2 u \frac{D u}{|D u|}, \frac{D u}{|D u|}\right\rangle,
$$
such an operator and similar normalized ones have attracted growing attention due to their interesting connections to stochastic zero-sum Tug-of-war games, whose pioneering result was first considered in Peres and Sheffield's seminal work \cite{PSSW09} via a game theoretical approach (see also \cite{BdaSR19} for a similar treatment related to free boundary problems).

Problems governed by the normalized $p-$Laplacian operator have attracted a huge deal of attention in the last years due to the wide wealth of geometric, pure PDE and probabilistic information related to solutions of such an operator. In this direction, we worth highlighting the series of fundamental works \cite{APR} and \cite{AR18} where the authors address $C_{\text{loc}}^{1, \alpha}$ regularity estimates to inhomogeneous problem
$$
   -\Delta^N_p u  = f\in L^{\infty}(B_1(0)) \,\,\text{and} \,\, -|Du|^{\gamma}\Delta^N_p u  = f\in L^{\infty}(B_1(0))\,\, \text{for} \,\gamma >-1.
$$

Finally, by combining the qualitative results from \cite{JLM01} (equivalence of notion of weak solutions) with quantitative ones (regularity estimates) from \cite[Theorem 1.1]{APR} and \cite[Theorem 1.1]{AR18} we can to obtain the following result:

\begin{theorem} Let $u$ be a bounded viscosity solution to the problem
\begin{equation}\label{EqNormp-Lapla}
   \mathcal{G}_{p, q}(x, Du, D^2 u)  =  f(x, u)  \textrm{ in } B_1(0)
\end{equation}
with $f \in C^0(B_1(0) \times \R)\cap L^\infty(B_1(0) \times \R)$. Then, $u \in C_{\text{loc}}^{1, \beta_p }(B_1(0))$. Moreover, the following estimate there holds
$$
         \displaystyle [u]_{C^{1,\beta_p}\left(B_{\frac{1}{2}}(0)\right)} \leq C\cdot \left[\|u\|_{L^{\infty}(B_1(0))} + 1 +\|f\|_{L^{\infty}(B_1(0) \times \R)}^{\frac{1}{p-1}}\right],
$$
where, $\beta_p \in (0, \alpha_p)\cap \left(0,  \frac{1}{p-1}\right]$, $0<r < \frac{1}{2}$, $C>0$ is a universal constant and $0< \alpha_p \leq 1$ is the sharp H\"{o}lder gradient exponent related to the homogeneous problem.
\end{theorem}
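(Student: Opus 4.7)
The plan is to reduce equation \eqref{EqNormp-Lapla} to the framework of Theorem \ref{main1}. First, I would relabel the degeneracy exponents by $\tilde{p} \defeq p-2$ and $\tilde{q} \defeq q-2$, so that $0 < \tilde{p} \le \tilde{q}$ and
$$\mathcal{H}_{p,q}(x,\xi) = |\xi|^{\tilde{p}} + \mathfrak{a}(x)|\xi|^{\tilde{q}}$$
fits the non-homogeneous degeneracy structure \eqref{N-HDeg}; the target exponent $\beta_p \in (0,\alpha_p)\cap(0,1/(p-1)]$ is exactly \eqref{SharpExp} applied with $\tilde{p}$ in place of $p$, since $1/(\tilde{p}+1)=1/(p-1)$.

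Second, I would verify that $F_p(\xi,X) = \mathrm{tr}[(\textrm{Id}_N + (p-2)\xi\otimes\xi/|\xi|^2)X]$ is uniformly elliptic in $X$ with constants $\lambda=\min\{p-1,1\}$ and $\Lambda=\max\{p-1,1\}$, uniformly in $\xi\neq 0$. The only formal departure from the hypotheses of Theorem \ref{main1} is that $F_p$ depends on $\xi$ rather than on $x$. This gradient dependence is absorbed by the same two-regime split used in the proof of Theorem \ref{main1}: in the non-degenerate regime $|Du(0)|>r^{\beta_p}$, rescaling by $r_0=|Du(0)|^{1/\beta_p}$ makes the coefficient matrix $\textrm{Id}_N+(p-2)Du\otimes Du/|Du|^2$ uniformly bounded, so the operator becomes a uniformly elliptic linear operator in $X$ with H\"older coefficients, to which \cite{C89}, \cite{CC95}, \cite{Tru88} apply; in the singular regime $|Du(0)|\le r^{\beta_p}$, the dyadic iteration of Lemmas \ref{lem.dy} and \ref{l3.3} is driven by the Approximation Lemma \ref{lem.flat}, whose proof passes to the limit in $C^1$ and therefore tolerates continuous $\xi$-dependence away from the degenerate set $\{Du_0=0\}$.

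Third, to activate the Approximation Lemma in this setting I would invoke the JLM equivalence \cite{JLM01} to identify the limiting profile $u_0$ with a weak $p$-harmonic function, which furnishes the sharp $C^{1,\alpha_p}$ estimate for $F_p(D\mathfrak{h},D^2\mathfrak{h})=0$ playing the role of the frozen-coefficients regularity; the global $C^{1,\gamma}$ a priori bound replacing Theorem \ref{GradThm} is provided by \cite[Theorem 1.1]{APR} for the single degeneracy $|Du|^{p-2}$ and by \cite[Theorem 1.1]{AR18} for the genuinely doubly-degenerate prefactor $\mathcal{H}_{p,q}$. The main obstacle is the adaptation of Lemma \ref{lem.flat} to handle the joint $(\xi,X)$-dependence of $F_p$: one must pass to the limit $u_k\to u_0$ in the $C^1$ topology and identify $F_p(Du_0,D^2u_0)=0$ in the viscosity sense, which requires the Cutting Lemma \ref{cutting} together with stability of viscosity solutions under joint $C^1$ convergence off the degenerate set. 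Once this step is secured, the remainder of the argument is a verbatim repetition of Sections \ref{sec.ALemmas}--\ref{sec.ProofMRes}, yielding the claimed estimate with the exponent $1/(p-1)$ in the source-term contribution arising from the scaling $\|f_{\kappa,\tau}\|_\infty \sim \tau^{\tilde{p}+2}/\kappa^{\tilde{p}+1}$ of Remark \ref{SmallRegime}.
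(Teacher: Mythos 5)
Your proposal matches the paper's approach exactly: the paper's proof of this theorem is the one-line stub ``The proof follows the same lines of Theorem \ref{main1},'' preceded by a pointer to the equivalence of weak and viscosity solutions from \cite{JLM01} and the gradient estimates of \cite[Theorem 1.1]{APR} and \cite[Theorem 1.1]{AR18} --- precisely the reductions (relabeling $\tilde p=p-2$ so that $1/(\tilde p+1)=1/(p-1)$, the ellipticity constants $\lambda=\min\{p-1,1\}$, $\Lambda=\max\{p-1,1\}$, the two-regime split, the identification of the limiting homogeneous profile with a $p$-harmonic function whose $C^{1,\alpha_p}$ estimate replaces the frozen-coefficient estimate) and citations you supply. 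You are in fact more forthcoming than the paper in noting that $F_p(\xi,X)$ carries a gradient dependence not literally covered by hypotheses (A0)--(A2), so Lemma \ref{lem.flat} --- in particular the Cutting Lemma and the stability step in the blow-up --- would need to be re-derived for operators of the Imbert--Silvestre form $|Du|^\gamma F(Du,D^2u)$; leaving this as ``the main obstacle'' is honest, but, like the paper's own stub proof, it stops just short of a complete argument at exactly the same place.
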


\begin{proof} The proof follows the same lines of Theorem \ref{main1}:
\end{proof}

We are able to obtain an explicit regularity exponent in the previous result in some particular scenarios.

\begin{corollary} Let $u$ be a bounded viscosity solution to \eqref{EqNormp-Lapla}. Suppose we are in some of scenarios considered in \cite{ATU17} and \cite[Section 4]{ATU18}. Then, $u \in C_{\text{loc}}^{1, \frac{1}{p-1}}(B_1(0))$. Moreover, there holds
	$$
	\displaystyle [u]_{C^{1, \frac{1}{p-1}}\left(B_{\frac{1}{2}}(0)\right)}\leq  C\cdot \left[\|u\|_{L^{\infty}(B_1(0))} + 1 +\|f\|_{L^{\infty}(B_1(0) \times \R)}^{\frac{1}{p-1}}\right].
	$$
\end{corollary}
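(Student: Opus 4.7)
My plan is to deduce this corollary essentially as a specialization of the preceding theorem, mirroring the way Corollary \ref{Cormain1} was extracted from Theorem \ref{main1}. The previous theorem asserts $u \in C^{1,\beta_p}_{\text{loc}}$ for every
\[
\beta_p \in (0, \alpha_p) \cap \left(0, \tfrac{1}{p-1}\right],
\]
where $\alpha_p \in (0,1]$ denotes the optimal H\"older exponent for the gradient of solutions of the homogeneous frozen-coefficient equation $F_p(D\mathfrak{h}, D^2 \mathfrak{h}) = 0$. The whole point of the corollary is that in the specific settings isolated in \cite{ATU17} and \cite[Section 4]{ATU18} — for instance, radially symmetric profiles, certain two-dimensional configurations, or the game-theoretic/normalized setup where $C^{1,1}$ estimates for $p$-harmonic maps are available — one actually has $\alpha_p = 1$.

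The plan is thus the following. First, invoke the quoted results from \cite{ATU17} and \cite[Section 4]{ATU18} to fix $\alpha_p = 1$ in each of the scenarios under consideration; this reduces the admissible range for the regularity exponent of $u$ to
\[
\beta_p \in (0, 1) \cap \left(0, \tfrac{1}{p-1}\right] = \left(0, \tfrac{1}{p-1}\right],
\]
since $\tfrac{1}{p-1} < 1$ whenever $p > 2$. Second, apply the preceding theorem with the explicit choice $\beta_p = \tfrac{1}{p-1}$; this legitimate choice yields $u \in C^{1,\frac{1}{p-1}}_{\text{loc}}(B_1(0))$ and the corresponding quantitative bound.

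Finally, the semi-norm estimate is inherited verbatim from the theorem with $\beta_p = \tfrac{1}{p-1}$: the universal constant $C>0$ depends only on the dimension, on the ellipticity constants $\lambda, \Lambda$ (hence only on $p$), on $\alpha_p = 1$, on $\beta_p$ and on the structural constants $L_1, L_2$ of \eqref{1.2}. A standard covering argument transports the interior semi-norm bound from $B_{1/2}(0)$ to arbitrary compactly contained subdomains.

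The only nontrivial ingredient — and the step one must be careful about — is verifying that the conditions under which \cite{ATU17, ATU18} provide $C^{1,1}$ estimates for the frozen homogeneous normalized $p$-Laplacian are compatible with the fully nonlinear non-homogeneously degenerate framework used to derive the preceding theorem. Once this is confirmed, the corollary is a clean specialization and no new argument is required.
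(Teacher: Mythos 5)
Your proposal follows exactly the route the paper takes for the analogous Corollary~\ref{Cormain1}: identify the scenarios in which the optimal gradient-H\"older exponent $\alpha_p$ of the frozen homogeneous problem is large enough, then specialize the preceding theorem to the right-endpoint exponent $\beta_p=\frac{1}{p-1}$. This is the intended argument and your logical structure is sound.

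One small inaccuracy is worth flagging. You assert that in the settings of \cite{ATU17} and \cite[Section~4]{ATU18} one has $\alpha_p=1$, i.e.\ that $p$-harmonic functions are $C^{1,1}$ there. That is not what those references give; in the plane, for instance, the sharp H\"older gradient exponent for $p$-harmonic functions is the Iwaniec--Manfredi exponent, which for $p>2$ lies strictly between $\frac{1}{p-1}$ and $1$. What the cited works actually furnish (and what the argument genuinely requires, since the admissible range $(0,\alpha_p)$ is open) is the strict inequality $\alpha_p>\frac{1}{p-1}$, which ensures $\beta_p=\frac{1}{p-1}\in(0,\alpha_p)\cap\bigl(0,\frac{1}{p-1}\bigr]$. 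With that correction, the remainder of your argument — picking $\beta_p=\frac{1}{p-1}$ in the preceding theorem, inheriting the seminorm bound with universal constant, and using a covering argument for the local statement — is exactly right.
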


\subsection{Connections with inhomogeneous $\infty-$Laplace equation}

Let us continue with the celebrated classical case: $\infty-$Laplace equation.

Next, we consider a normalized viscosity solution $(u, v)$  to the nonlinear system:

\begin{equation}\label{EqSyst2}
\left\{
\begin{array}{rcll}
  \left[1+\left(\frac{q-2}{p-2}\right)\mathfrak{a}(x)|Du|^{q-p}\right]\Delta_{\infty} u(x) & = & f(x, v) & \mbox{in} \,\, \Omega \\
 \left[|Dv|^{2}+\mathfrak{a}(x)|Dv|^{q-p+2}\right]\Delta v(x) & = & g(x, u) & \mbox{in} \,\,\,\Omega \\
 u(x) & = & h(x) & \mbox{on} \,\,\,\partial \Omega \\
 v(x) & = & h(x) & \mbox{on} \,\,\,\partial \Omega,\\
\end{array}
\right.
\end{equation}
where $2<p\leq q< \infty$, $f, g \in C^0(\Omega\times \R; \R)$, $h \in C^0(\partial \Omega)$ and \eqref{1.3} is in force, and
$$
 \Delta_{\infty} u(x) \defeq \left\langle D^2 u D u, D u\right\rangle
$$
states the nowadays well-known $\infty-$Laplacian operator (see, \cite{ACJ04} for a comprehensive survey on this subject and \cite{BdaSR19}, \cite{daSRS19I} and \cite{RTU15} for asymptotic analysis problems where such an operator arises).

Now, let us provide a few insights toward regularity aspects of the system \eqref{EqSyst2}. By exploring the scaling properties, we notice that
$$
\left|\text{First Equation in} \,\,\,\eqref{EqSyst2}\right| \lesssim \left[|Du|^{2}+\left(\frac{q-2}{p-2}\right)\mathfrak{a}(x)|Du|^{q-p+2}\right]\cdot |D^2 u|
$$
shares the same double degeneracy feature as
$$
\left[|Dv|^{2}+\mathfrak{a}(x)|Dv|^{q-p+2}\right]\cdot|\Delta v(x)|\lesssim 1
$$

Taking into account the regularity of the data and invoking estimates addressed in \cite[Corollary 2.]{L14} and \cite{LW08} we can conclude that $u$ is a locally Lipschitz continuous function with its corresponding norm uniformly controlled. On the other hand, from Corollary \ref{Cormain1} we conclude that $v \in C_{\text{loc}}^{1, \frac{1}{3}}(\Omega)$ with a uniform bound for its corresponding semi-norm (cf. \cite{RTU15}).

Next, let us present a simple example where the system \eqref{Eqp&qLaplac} takes place. Let $w$ be a viscosity solution to

\begin{equation}\label{Eqp&qLaplac}
\begin{array}{rcl}
  \mathcal{G}(Dw, D^2 w) & \defeq & |Dw|^{4-p}\left(\Delta_p w + \mathfrak{a}(x)\Delta_q w\right) \\
   & = & \mathcal{G}_1(Dw, D^2 w) + \mathcal{G}_2(Dw, D^2 w)\\
   & = & \text{A bounded source term} \quad \text{in} \quad B_1(0),
\end{array}
\end{equation}
where
$$
\left\{
\begin{array}{rcl}
  \mathcal{G}_1(Dw, D^2 w) & \defeq & \left[|Dw|^{2}+\mathfrak{a}(x)|Dw|^{q-p+2}\right]\Delta w(x) \\
   \mathcal{G}_2(Dw, D^2 w) & \defeq &  \left[(p-2)+\left(q-2\right)\mathfrak{a}(x)|Dw|^{q-p}\right]\Delta_{\infty} w(x).
\end{array}
\right.
$$
Observe that $\mathcal{G}_2$ represents the ``rough part'' of the operator because it does not satisfy the structural assumptions (A0)-(A2). Nevertheless, if it were possible to obtain a sort of uniform bound for such a term, we could obtain a similar conclusion as previously (cf. \cite[Proposition 4.6]{ART15} and \cite{RTU15}).

\begin{proposition} Let $w$ be a bounded viscosity solution to \eqref{Eqp&qLaplac}. Assume further $\mathcal{G}_2 \in L^{\infty}(B_1(0))$. Then, $w \in C_{\text{loc}}^{1, \frac{1}{3}}(B_1(0))$. Moreover, there holds
	$$
	\displaystyle [u]_{C^{1, \frac{1}{3}}\left(B_{\frac{1}{2}}(0)\right)}\leq  C\cdot \left[\|u\|_{L^{\infty}(B_1(0))} + 1 +\sqrt[3]{\|\mathcal{G}+\mathcal{G}_2\|_{L^{\infty}(B_1(0))}}\right].
	$$
\end{proposition}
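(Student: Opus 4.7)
The plan is to reduce the equation \eqref{Eqp&qLaplac} to the non-homogeneous degeneracy framework of Corollary \ref{Cormain1} by absorbing the ``rough part'' $\mathcal{G}_2$ into the source term. Since by hypothesis both $\mathcal{G}(Dw, D^2w)$ and $\mathcal{G}_2(Dw, D^2w)$ belong to $L^{\infty}(B_1(0))$, the algebraic identity $\mathcal{G}_1 = \mathcal{G} - \mathcal{G}_2$ shows that $w$ is a viscosity solution of
$$
\big[\,|Dw|^2+\mathfrak{a}(x)|Dw|^{q-p+2}\,\big]\,\Delta w \;=\; \tilde{f}(x) \in L^{\infty}(B_1(0)),
$$
where $\|\tilde{f}\|_{L^{\infty}(B_1(0))} \le \|\mathcal{G}\|_{L^{\infty}(B_1(0))} + \|\mathcal{G}_2\|_{L^{\infty}(B_1(0))}$ by the triangle inequality.

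Next I would identify the rewritten equation with the model \eqref{1.1} for the specific choices $F(X) = \tr(X) = \Delta$ and $\mathcal{H}(x,\xi) = \mathcal{K}_{\tilde{p},\tilde{q},\mathfrak{a}}(x,|\xi|)$ with $\tilde{p} = 2$ and $\tilde{q} = q-p+2$. Note that $\tilde{p} \le \tilde{q}$ whenever \eqref{1.3} is in force, so the degeneracy condition \eqref{1.2}--\eqref{N-HDeg} is fulfilled with $L_1 = L_2 = 1$. The Laplacian trivially satisfies (A0)--(A2) with ellipticity constants $\lambda = \Lambda = 1$ and zero oscillation modulus, and its ``frozen coefficients'' profiles are harmonic; hence the sharp exponent is $\alpha_{\mathrm{F}} = 1$. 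All the hypotheses of Corollary \ref{Cormain1} are therefore met with $\tilde{p}+1 = 3$.

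An application of Corollary \ref{Cormain1} then delivers $w \in C^{1,\frac{1}{3}}_{\text{loc}}(B_1(0))$ together with the semi-norm estimate
$$
[w]_{C^{1,\frac{1}{3}}(B_r(x_0))} \leq C\cdot\left(\|w\|_{L^{\infty}(B_1(0))} + 1 + \|\tilde{f}\|_{L^{\infty}(B_1(0))}^{\frac{1}{3}}\right),
$$
which, after inserting the bound on $\tilde{f}$, yields precisely the advertised inequality.

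The principal subtlety is exactly the one flagged in the preamble to the proposition: $\mathcal{G}_2$ itself does not satisfy the structural assumptions (A0)--(A2), since its first and second derivative dependencies are entangled through $\langle D^2 w\, Dw, Dw\rangle$ in a non-separable way, so the paper's methodology cannot be applied directly to $\mathcal{G}_1 + \mathcal{G}_2$. The hypothesis $\mathcal{G}_2 \in L^{\infty}(B_1(0))$ is precisely the structural bypass that makes the argument go through. The only technical point that needs verification is that the rearrangement is legitimate at the viscosity level: whenever a smooth test function $\varphi$ touches $w$ from above (respectively, below) at $x_0$, transferring the subsolution/supersolution inequality from $\mathcal{G}(D\varphi, D^2\varphi)$ to $\mathcal{G}_1(D\varphi, D^2\varphi) = \mathcal{G}(D\varphi, D^2\varphi) - \mathcal{G}_2(D\varphi, D^2\varphi)$ is justified by the $L^{\infty}$ assumption on $\mathcal{G}_2$. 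No novel idea beyond this algebraic decomposition is needed.
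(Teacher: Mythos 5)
Your overall strategy — write $\mathcal{G}_1 = \mathcal{G} - \mathcal{G}_2$, observe that $\mathcal{G}_1$ is a non-homogeneous degeneracy operator $\mathcal{K}_{\tilde p,\tilde q,\mathfrak{a}}(x,|Dw|)\,\Delta w$ with $\tilde p=2$, $\tilde q=q-p+2$, driven by the Laplacian so $\alpha_{\mathrm{F}}=1$, and then apply Corollary \ref{Cormain1} with $\tilde p +1 = 3$ — is exactly what the paper intends, and the parameter identification and the triangle-inequality bookkeeping for the source are correct. The paper itself states this proposition without proof and explicitly flags the decomposition as heuristic (``if it were possible to obtain a sort of uniform bound...''), so your reconstruction of the argument matches the paper's intent.

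There is, however, a genuine soft spot in the last paragraph, and your stated resolution of it is wrong. You claim the rearrangement is ``justified by the $L^\infty$ assumption on $\mathcal{G}_2$,'' meaning the test-function inequality transfers from $\mathcal{G}$ to $\mathcal{G}_1=\mathcal{G}-\mathcal{G}_2$. It does not: since the coefficient $(p-2)+(q-2)\mathfrak{a}(x)|\xi|^{q-p}$ is nonnegative (here $p,q>2$), the operator $\mathcal{G}_2$ is \emph{degenerate elliptic}, i.e.\ monotone increasing in the Hessian. If $\varphi$ touches $w$ from above at $x_0$ and $w$ were twice differentiable there, one would have $\mathcal{G}_2(D\varphi(x_0),D^2\varphi(x_0)) \geq \mathcal{G}_2(Dw(x_0),D^2w(x_0))$, which is the \emph{wrong} direction for deducing $\mathcal{G}_1(D\varphi,D^2\varphi)\geq f(x_0)-\mathcal{G}_2(Dw(x_0),D^2w(x_0))$ from $\mathcal{G}(D\varphi,D^2\varphi)\geq f(x_0)$. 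Subtracting a degenerate elliptic term flips ellipticity, so the relabelled source does not inherit a viscosity inequality for free. Moreover, for a bare viscosity solution the expression $\mathcal{G}_2(Dw(\cdot),D^2w(\cdot))$ is not even a priori a well-defined measurable function, so ``$\mathcal{G}_2\in L^\infty$'' already presupposes regularity of $w$. The honest route — and the one the surrounding subsections implicitly lean on — is to invoke an equivalence between weak (Sobolev) and viscosity solutions (cf.\ \cite{JLM01}, \cite{FZ20}) or enough a priori $W^{2,m}_{\text{loc}}$ regularity that the equation $\mathcal{G}_1(Dw,D^2w)=f-\mathcal{G}_2(Dw,D^2w)$ holds pointwise a.e.\ with fixed $L^\infty$ right-hand side, after which one re-enters the viscosity framework and Corollary \ref{Cormain1} applies as you describe. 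You were right to flag the subtlety; you just misidentified what dispatches it.
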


A similar reasoning also works to viscosity solutions of

\begin{equation}\label{Eqp&qLaplac2}
\begin{array}{rcl}
  \mathcal{G}(Dw, D^2 w) & \defeq & |Du|^{\gamma}\left(\Delta_p w + \mathfrak{a}(x)\Delta_q w\right)\\
   & = & \mathcal{G}^{\ast}_1(Dw, D^2 w) + \mathcal{G}^{\ast}_2(Dw, D^2 w)\\
   & = & \text{A bounded source term} \quad \text{in} \quad B_1(0),
\end{array}
\end{equation}
where $\gamma > 2-p$, and for $\tau_p = p-2+\gamma$ and $\tau_q  = q-2+\gamma$ we have
$$
\left\{
\begin{array}{rcl}
  \mathcal{G}^{\ast}_1(Dw, D^2 w) & \defeq & \left[|Dw|^{\tau_p}+\mathfrak{a}(x)|Dw|^{\tau_q}\right]\Delta w(x) \\
   \mathcal{G}^{\ast}_2(Dw, D^2 w) & \defeq &  \left[(p-2)|Dw|^{\tau_p}+(q-2)\mathfrak{a}(x)|Dw|^{\tau_q}\right]\Delta_{\infty} w(x).
\end{array}
\right.
$$
One more time, a kind of uniform bound for the ``bad term'' in the previous equations implies a sort of fine regularity estimate for solutions (cf. \cite{AR18}).

\begin{proposition} Let $w$ be a bounded viscosity solution to \eqref{Eqp&qLaplac2}. Assume further $\mathcal{G}^{\ast}_2 \in L^{\infty}(B_1(0))$. Then, $w \in C_{\text{loc}}^{1, \frac{1}{\tau_p+1}}(B_1(0))$.
\end{proposition}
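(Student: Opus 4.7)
The plan is to reduce \eqref{Eqp&qLaplac2} to the framework already handled by Corollary \ref{Cormain1}, by treating the infinity-Laplacian piece $\mathcal{G}_2^{\ast}$ as part of the forcing term. Under the standing hypothesis $\mathcal{G}_2^{\ast}(Dw, D^2 w) \in L^{\infty}(B_1(0))$, setting $\tilde{f}(x) \defeq f(x) - \mathcal{G}_2^{\ast}(Dw(x), D^2 w(x))$ produces a bounded function $\tilde{f}$, so $w$ should satisfy, in the viscosity sense,
$$
\left[|Dw|^{\tau_p} + \mathfrak{a}(x) |Dw|^{\tau_q}\right] \Delta w = \tilde{f}(x) \quad \text{in} \quad B_1(0).
$$

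The second step is to verify that this reduced equation fits the structural framework of Theorem \ref{main1} and Corollary \ref{Cormain1} with exponents $(\tau_p, \tau_q)$ in place of $(p, q)$. The operator $F(X) = \tr(X) = \Delta$ is linear, uniformly elliptic with $\lambda = \Lambda = 1$, independent of $x$ (so we may take $\omega \equiv 0$), and trivially satisfies (A0)--(A2). The degeneracy term $\mathcal{K}_{\tau_p, \tau_q, \mathfrak{a}}(x,|\xi|) = |\xi|^{\tau_p} + \mathfrak{a}(x)|\xi|^{\tau_q}$ satisfies \eqref{1.2}--\eqref{1.3} since $\tau_p = p-2+\gamma > 0$ (because $\gamma > 2-p$) and $\tau_p \leq \tau_q$ (because $p \leq q$). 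Finally, solutions of the frozen-coefficient homogeneous problem $\Delta \mathfrak{h} = 0$ are harmonic and hence of class $C^{\infty}$, so $\alpha_{\mathrm{F}} = 1$. Corollary \ref{Cormain1} then yields $w \in C_{\text{loc}}^{1, \frac{1}{\tau_p+1}}(B_1(0))$ together with the corresponding quantitative estimate.

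The delicate point is rigorously justifying the decomposition of the equation at the viscosity level: one is splitting an operator acting on $D^2 w$ into two pieces, each of which individually still acts on $D^2 w$, and re-interpreting one of them as a bounded inhomogeneity. A clean way to handle this is by approximation: mollify the data $(f, \mathfrak{a})$ so as to produce classical $C^2$ approximants $w_{\varepsilon}$ of $w$, for which the point-wise splitting is trivially legal; apply the universal estimate of Corollary \ref{Cormain1} to each $w_{\varepsilon}$ using the assumed $L^{\infty}$-bound on $\mathcal{G}_2^{\ast}$; and pass to the limit via stability of viscosity solutions combined with the compactness supplied by Theorem \ref{GradThm}. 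An alternative route, mirroring the use of \cite{JLM01} invoked in the preceding proposition, is to exploit the equivalence between viscosity and weak $p$- and $q$-Laplacian solutions, in which setting moving the $\Delta_{\infty}$ contribution to the right-hand side is unproblematic.
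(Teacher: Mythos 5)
Your proposal matches the paper's intended strategy. The paper states this proposition without a proof, offering only the remark that a uniform bound on the ``bad term'' $\mathcal{G}_2^{\ast}$ allows one to reinterpret it as part of a bounded inhomogeneity and thereby reduce matters to the framework of Theorem~\ref{main1} and Corollary~\ref{Cormain1} with $F=\Delta$ (so $\alpha_{\mathrm F}=1$) and exponents $(\tau_p,\tau_q)$; this is exactly what you propose. Your concern about the viscosity-level legitimacy of splitting an operator that acts on $D^2 w$ into a retained piece and an absorbed piece is a genuine subtlety that the paper also leaves implicit: the hypothesis $\mathcal{G}_2^{\ast}\in L^\infty(B_1(0))$ already presupposes that $\mathcal{G}_2^{\ast}(Dw,D^2w)$ is a well-defined measurable function of $x$, which is not automatic for a viscosity solution, and even granting that, passing from $\mathcal{G}_1^{\ast}+\mathcal{G}_2^{\ast}=f$ to $\mathcal{G}_1^{\ast}=\tilde f$ at the level of test functions requires an argument. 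Of your two remedies, invoking the equivalence of viscosity and weak solutions is the cleaner route and is consistent with how the paper handles the subsequent Proposition on \eqref{Eqp&qLaplac3} (citing \cite{FZ20} and, implicitly, \cite{JLM01}); the mollification route is plausible but would require a nontrivial stability argument that you do not spell out. One last small mismatch worth flagging: once $\mathcal{G}_2^{\ast}$ is absorbed, the effective source $\tilde f = f - \mathcal{G}_2^{\ast}$ is merely $L^\infty$ rather than continuous, which sits slightly outside the pointwise viscosity definition the paper uses; this is best handled either by invoking Theorem~\ref{GradThm} directly (which only needs $L^\infty$ data) or by working in an $L^p$-viscosity framework, a point the paper itself does not address.
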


Particularly, when $\gamma = 0$ we recover, to some extent, the classical results regarding $C^{p^{\prime}}$-conjecture addressed in \cite[Theorem 1]{ATU17} and \cite[Theorem 2]{ATU18}.

\begin{corollary} Let $w$ be a bounded viscosity solution to \eqref{Eqp&qLaplac2}. Assume further $\mathcal{G}^{\ast}_2 \in L^{\infty}(B_1(0))$. Then, $w \in C_{\text{loc}}^{1, \frac{1}{p-1}}(B_1(0))$.
\end{corollary}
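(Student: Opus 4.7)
The plan is to deduce this Corollary directly from the preceding Proposition by specializing its parameter $\gamma$ to zero. With this choice the degeneracy exponents in \eqref{Eqp&qLaplac2} become $\tau_p = p-2$ and $\tau_q = q-2$, while the admissibility hypothesis $\gamma > 2-p$ of the Proposition translates into $p>2$, exactly the range needed to keep the shifted pair $(\tilde p,\tilde q) \defeq (p-2,q-2)$ inside the structural window $0<\tilde p \le \tilde q <\infty$ of \eqref{1.3}.

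First I would use the standing assumption $\mathcal{G}^{\ast}_2 \in L^{\infty}(B_1(0))$ together with the $L^{\infty}$-boundedness of the full right-hand side in \eqref{Eqp&qLaplac2} to conclude, by subtraction, that
$$
    \mathcal{G}^{\ast}_1(Dw, D^2 w) = \bigl[|Dw|^{p-2} + \mathfrak{a}(x)|Dw|^{q-2}\bigr]\Delta w = \tilde f \in L^{\infty}(B_1(0))
$$
in the viscosity sense. The key observation is that this is precisely the model equation of Corollary \ref{Cormain1}, with the fully nonlinear operator $F$ taken to be the Laplacian and with degeneracy pair $(\tilde p,\tilde q)$ in place of $(p,q)$.

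Next I would verify the hypotheses of Corollary \ref{Cormain1} under this specialization. Since $\Delta$ is linear and uniformly elliptic with $\lambda=\Lambda=1$, the conditions (A0)--(A2) hold trivially with $\omega \equiv 0$; and every $\Delta$-harmonic function is $C^{\infty}$, so the optimal exponent from the homogeneous ``frozen coefficients'' problem is $\alpha_{\mathrm{F}}=1$, placing the Laplacian among the operators of Section \ref{Examples} for which the sharp exponent is attained. Corollary \ref{Cormain1} then delivers $w \in C_{\text{loc}}^{1,\beta_{\star}}(B_1(0))$ with
$$
   \beta_{\star} = \frac{1}{\tilde p + 1} = \frac{1}{(p-2)+1} = \frac{1}{p-1},
$$
which is exactly the desired conclusion, accompanied by the quantitative estimate inherited from Theorem \ref{main1}.

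The only subtle point is purely bookkeeping of the exponent shift: in non-divergence form the $p$-Laplacian carries the gradient weight $|Dw|^{p-2}$ rather than $|Dw|^p$, which is why the sharp H\"older gradient exponent reads $\frac{1}{p-1}$ and not $\frac{1}{p+1}$, matching the heuristic $p'$-conjecture threshold. No new compactness, scaling, or oscillation mechanism is needed beyond those already encoded in Theorem \ref{main1} and Corollary \ref{Cormain1}; the argument is a direct transfer of the main regularity result to the shifted framework $(\tilde p,\tilde q) = (p-2,q-2)$ after isolating the ``good'' part $\mathcal{G}^{\ast}_1$ of the operator.
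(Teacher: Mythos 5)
Your proposal is correct and follows the paper's intended route: the Corollary is exactly the preceding Proposition specialized at $\gamma = 0$, which gives $\tau_p = p-2$ and hence the exponent $\frac{1}{\tau_p+1} = \frac{1}{p-1}$, with the underlying mechanism being that the bound on $\mathcal{G}^{\ast}_2$ lets one absorb it into the source so that $w$ solves $\left[|Dw|^{p-2}+\mathfrak{a}(x)|Dw|^{q-2}\right]\Delta w = \tilde f \in L^{\infty}$, a model covered by Corollary \ref{Cormain1} with $\alpha_{\mathrm{F}}=1$. You have merely spelled out the exponent bookkeeping and the admissibility check ($p>2$ so that $\tilde p = p-2 > 0$) which the paper leaves implicit in its one-line remark ``when $\gamma=0$ we recover\dots''.
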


\subsection{Problems with $(p, q)-$growth in non-divergence form}

In conclusion, we would like to stress that our approach also allows us to consider solutions to nonlinear problems with $(p, q)-$growth as follows (cf. \cite{DeFM20}):
$$
-\mathcal{L}w(x) \defeq \Div\left(|Dw|^{p-2}Dw + a(x)|Dw|^{q-2}Dw\right) = f\in L^{\infty}(B_1(0))
$$
By way of exemplification, such solutions can be obtained as minimizers to
{\small{
$$
\displaystyle  w \mapsto \min_{u_0+W_0^{1,p}(\Omega)} \int_{\Omega} \left( \frac{1}{p}|Dw|^p + \frac{1}{q}\mathfrak{a}(x)|Dw|^q - fw\right)dx \quad (\text{with \eqref{1.3} in force})
$$}}
From the equivalence of weak solutions in \cite{FZ20}, and by formal computation such an operator can be re-written (in non-divergence form) as:
\begin{equation}\label{Eqp&qLaplac3}
  -\mathcal{L}w(x) \defeq \mathcal{G}_{1}^{\star}(Du, D^2 w) + \mathcal{G}_{2}^{\star}(Du, D^2 w)= f(x),
\end{equation}
where
$$
\left\{
\begin{array}{rcl}
  \mathcal{G}^{\star}_1(Dw, D^2 w) & \defeq & \left[|Dw|^{p-2}+\mathfrak{a}(x)|Dw|^{q-2}\right]\Delta w(x) \\
   \mathcal{G}^{\star}_2(Dw, D^2 w) & \defeq &  \left[\left(p-2\right)|Dw|^{p-2}+\left(q-2\right)\mathfrak{a}(x)|Dw|^{q-2}\right]\Delta^N_{\infty} w(x)\\
    &+& |Dw|^{q-2}Dw\cdot D \mathfrak{a}(x),
\end{array}
\right.
$$
where
$$
\Delta^N_{\infty} w(x) \defeq \left\langle D^2 w \frac{D w}{|D w|}, \frac{D w}{|D w|}\right\rangle
$$
is the normalized $\infty-$Laplacian operator (see, \cite{LW08II}).

Therefore, under some specific constraints we are able to obtain the following result (cf. \cite{ATU17} and \cite{ATU18}):
\begin{proposition} Let $w \in C^{0, 1}(B_1(0))$ be a bounded viscosity solution to \eqref{Eqp&qLaplac3}. Assume $\Delta_{\infty}^N w, D \mathfrak{a} \in L^{\infty}(B_1(0))$. Then, $w \in C_{\text{loc}}^{1, \frac{1}{p-1}}(B_1(0))$. Moreover, there holds
	$$
	\displaystyle [w]_{C^{1, \frac{1}{p-1}}\left(B_{\frac{1}{2}}(0)\right)}\leq  C\cdot \left[\|w\|_{L^{\infty}(B_1(0))} + 1 +\|f+\mathcal{G}^{\star}_2\|_{L^{\infty}(B_1(0))}^{\frac{1}{p-1}}\right].
	$$
\end{proposition}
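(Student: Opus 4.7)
The plan is to recognize that, under the stated hypotheses, the equation \eqref{Eqp&qLaplac3} can be recast into the exact form covered by Corollary \ref{Cormain1}, by transferring the ``rough part'' $\mathcal{G}_2^\star$ to the right-hand side. Concretely, I would first rewrite the equation in the pointwise/viscosity sense as
\[
  \mathcal{G}_1^\star(Dw, D^2 w) \;=\; \tilde{f}(x) \;:=\; f(x) - \mathcal{G}_2^\star(Dw, D^2 w) \quad \text{in} \quad B_1(0).
\]
The left-hand side has precisely the non-homogeneous degenerate structure of our model \eqref{1.1}, namely $\mathcal{H}(x, Dw) F(D^2 w)$, with
\[
  \mathcal{H}(x, \xi) \;=\; |\xi|^{p-2} + \mathfrak{a}(x)|\xi|^{q-2}, \qquad F(X) \;=\; \tr(X) \;=\; \Delta.
\]
Identifying the degeneracy exponents as $\tilde{p} := p-2$ and $\tilde{q} := q-2$, condition \eqref{1.3} holds because $2 < p \le q < \infty$, and \eqref{1.2} holds with $L_1 = L_2 = 1$. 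The inner operator $F = \Delta$ is linear (hence satisfies (A0)-(A1) with $\lambda = \Lambda = 1$ and (A2) with $\omega \equiv 0$) and its ``frozen coefficient'' homogeneous profiles are harmonic functions, so $\alpha_F = 1$. Therefore this operator falls in the class discussed in Section \ref{Examples}, and Corollary \ref{Cormain1} is applicable provided we verify $\tilde{f} \in L^\infty(B_1(0))$.

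Next, I would bound the rough part. By hypothesis $w \in C^{0,1}(B_1(0))$, so $\|Dw\|_{L^\infty(B_1(0))}$ is finite; moreover $\Delta_\infty^N w, D\mathfrak{a} \in L^\infty(B_1(0))$ by assumption. From the explicit expression
\[
  \mathcal{G}_2^\star(Dw, D^2 w) \;=\; \bigl[(p-2)|Dw|^{p-2} + (q-2)\mathfrak{a}(x)|Dw|^{q-2}\bigr]\Delta_\infty^N w + |Dw|^{q-2} Dw \cdot D\mathfrak{a}(x),
\]
an immediate pointwise estimate yields
\[
  \|\mathcal{G}_2^\star(Dw, D^2 w)\|_{L^\infty(B_1(0))} \;\le\; C_0\bigl(p,q,\|\mathfrak{a}\|_{L^\infty}, \|Dw\|_{L^\infty}, \|\Delta_\infty^N w\|_{L^\infty}, \|D\mathfrak{a}\|_{L^\infty}\bigr),
\]
so that $\tilde{f} \in L^\infty(B_1(0))$ with $\|\tilde{f}\|_{L^\infty} \le \|f\|_{L^\infty} + \|\mathcal{G}_2^\star\|_{L^\infty}$.

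Finally, applying Corollary \ref{Cormain1} to $w$ as a bounded viscosity solution of $\mathcal{G}_1^\star(Dw, D^2 w) = \tilde{f}(x)$, with degeneracy exponent $\tilde{p} = p-2$, yields $w \in C^{1,\beta}_{\text{loc}}(B_1(0))$ with
\[
  \beta \;=\; \frac{1}{\tilde{p}+1} \;=\; \frac{1}{p-1},
\]
together with the interior estimate
\[
  [w]_{C^{1,\frac{1}{p-1}}(B_r(x_0))} \;\le\; C \cdot \Bigl(\|w\|_{L^\infty(B_1(0))} + 1 + \|\tilde{f}\|_{L^\infty(B_1(0))}^{\frac{1}{p-1}}\Bigr),
\]
which, by definition of $\tilde{f}$, gives the desired bound in terms of $\|f + \mathcal{G}_2^\star\|_{L^\infty(B_1(0))}^{\frac{1}{p-1}}$.

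The main delicate point I would expect is the viscosity-sense equivalence between $-\mathcal{L}w = f$ and $\mathcal{G}_1^\star(Dw, D^2 w) = \tilde{f}$: since $w$ is only in $C^{0,1}$, the decomposition $\mathcal{G}_1^\star + \mathcal{G}_2^\star$ cannot be performed classically at every point. I would invoke the equivalence of weak and viscosity solutions from \cite{FZ20} (as the statement itself suggests) to interpret the original equation in the viscosity sense, and then justify the transfer of $\mathcal{G}_2^\star$ to the right-hand side using the fact that $\Delta_\infty^N w$ and $D\mathfrak{a}$ are essentially bounded (so that the ``rough'' contribution acts as a bona-fide bounded source term in the sense of admissible test functions touching $w$ from above/below). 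Once this reformulation is justified, the conclusion is an immediate consequence of Corollary \ref{Cormain1}.
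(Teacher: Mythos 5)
Your argument is correct and coincides with the approach the paper intends for this proposition: decompose $-\mathcal{L}w = \mathcal{G}_1^\star + \mathcal{G}_2^\star = f$, move the (bounded, by the hypotheses $w\in C^{0,1}$, $\Delta_\infty^N w, D\mathfrak{a}\in L^\infty$) term $\mathcal{G}_2^\star$ to the right-hand side, observe that $\mathcal{G}_1^\star = \big[|Dw|^{p-2}+\mathfrak{a}(x)|Dw|^{q-2}\big]\Delta w$ fits the model \eqref{1.1} with $F=\Delta$ (so $\alpha_{\mathrm{F}}=1$) and degeneracy exponents $\tilde p = p-2 \le \tilde q = q-2$, and then apply Corollary \ref{Cormain1} to obtain $\beta = 1/(\tilde p+1)=1/(p-1)$. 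The only cosmetic discrepancy is a sign: your $\tilde f = f - \mathcal{G}_2^\star$ is the correct transferred source, while the proposition displays $f+\mathcal{G}_2^\star$ in the norm, but this is immaterial since $\|\tilde f\|_{L^\infty}\le\|f\|_{L^\infty}+\|\mathcal{G}_2^\star\|_{L^\infty}$, and your flagging of the viscosity-sense reformulation (handled via \cite{FZ20}) is precisely the subtlety the paper acknowledges.
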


\section{Some explicit examples and further comments}\label{Examples}

\subsection{Some examples}

We will present some examples where we obtain optimal estimates. Precisely, our examples are in such a way that $\alpha_{\mathrm{F}}=1$. Therefore, we are allowed to choose $\beta = \frac{1}{p+1}$ in the Theorem \ref{main1}.

\begin{example}[{\bf Sharpness of Theorem \ref{main1}}]
It is important to stress that one cannot expect solutions of \eqref{1.1} to be in $C^{1, 1}$ even when the data is smooth. Indeed, the radially symmetric function $v: B_1(0)\to \R$ given by
$$
 \,v(x)=|x|^{\,\frac{p+2}{p+1}}
$$
fulfills
$$
   \left[|D v|^{p}+\mathfrak{a}(x)|D v|^{q}\right]\Delta v  =  f(x)  \text{ in }  B_1(0)
$$
in the viscosity sense, where $\mathfrak{a} \in C^0(B_1(0), [0, \infty))$ and
$$
\begin{array}{rcl}
  f(x) & = & \left[\left(\frac{p+2}{p+1}\right)^{p+1}+\left(\frac{p+2}{p+1}\right)^{q+1}\right]\left[\frac{1}{p+1}+(N-1)\right]\left(1+\mathfrak{a}(x)|x|^{\frac{q-p}{p+1}}\right) \\
   & \in & L^{\infty}(B_1(0))\cap C^0(B_1(0)).
\end{array}
$$
Note that $v\in C_{\text{loc}}^{1,\frac{1}{p+1}}(B_1(0))$, however $v\notin C_{\text{loc}}^{1,\frac{1}{p+1}+\varepsilon}(B_1(0))$ for any $\varepsilon \in (0, 1)$.
\end{example}

\begin{example}[{\bf Operators with small ellipticity aperture}] Our results hold for operators with small ellipticity aperture: For such a class, local $C^{2,\alpha}$ \textit{a priori} estimates for solutions of fully nonlinear equations hold under the assumption that the ellipticity constants $0<\lambda\le \Lambda<\infty$ do not deviate much, in the sense that $\mathfrak{e} \defeq 1 -\frac{\lambda}{\Lambda}$ is small enough (see, \cite[Chapter 5]{daS15}). Particularly, such operators cover Isaac's type equations, which appear in stochastic control and in the theory of differential games:
      $$
      \displaystyle F(x, D^2 u) \defeq \sup_{\hat{\beta} \in \mathcal{B}} \inf_{\hat{\alpha} \in \mathcal{A}}\left(\mathcal{L}^{\hat{\alpha}\hat{\beta}} u(x)\right) \quad \left(\text{resp.}\,\,\,\inf_{\hat{\beta} \in \mathcal{B}} \sup_{\hat{\alpha} \in \mathcal{A}} \left(\mathcal{L}^{\hat{\alpha}\hat{\beta}} u(x)\right)\right),
      $$
      where
      $$
         \displaystyle \mathcal{L}^{\hat{\alpha}\hat{\beta}} u(x) = \sum_{i, j=1}^{N} a_{ij}^{\hat{\alpha}\hat{\beta}}(x)\partial_{ij} u(x)
      $$
      is a family of uniformly elliptic operators with H\"{o}lder continuous coefficients and ellipticity constants $\lambda$ and $\Lambda$ such that $0<\left|1 -\frac{\lambda}{\Lambda}\right| \ll 1$.

\end{example}

\begin{example}[{\bf Concave/convex operators}]\label{Ex3} Our results hold for Pucci's extremal operators
$$
   F(D^2u) \defeq \mathcal{M}_{\lambda, \Lambda}^{\pm}(D^2 u)
$$
More generally, we can consider Belmann's type equations, which appear in stochastic control:
\[
\displaystyle F(x, D^2 u) \defeq  \inf_{\hat{\alpha} \in \mathcal{A}}\left(\mathcal{L}^{\hat{\alpha}} u(x)\right) \quad \left(\text{resp.}\,\,\,\sup_{\hat{\alpha} \in \mathcal{B}}  \left(\mathcal{L}^{\hat{\alpha}} u(x)\right)\right),
\]
where
      $$
         \displaystyle \mathcal{L}^{\hat{\alpha}} u(x) = \sum_{i, j=1}^{N} a_{ij}^{\hat{\alpha}}(x)\partial_{ij} u(x)
      $$
is a family of uniformly elliptic operators with H\"{o}lder continuous coefficients, and ellipticity constants $0<\lambda \le\Lambda<\infty$.

In conclusion, we can consider concave/convex operators, where $C_{\text{loc}}^{2, \alpha}$ estimates are available for homogeneous problems, according to Evans-Krylov-Trudinger's regularity theory \cite{Ev82}, \cite{Kry82}, \cite{Kry83}, \cite{Tru83} and \cite{Tru84}.

Therefore, we get, in such a scenario, $C_{\text{loc}}^{1, \frac{1}{p+1}}$ estimates to \eqref{1.1}.
\end{example}

\begin{example}[{\bf Flat solutions for non-convex operators}] Recently, in \cite[Theorem 1]{DD19} was established, local Schauder type estimates for non-convex fully nonlinear operators ($C_{\text{loc}}^{2, \alpha}$ \textit{a priori} estimates) for flat solutions, i.e., solutions whose oscillation is very small, provided $F \in C^{1, \tau}(\text{Sym}(N))$ and has continuous coefficients. Therefore, such a family of solutions and operators, we obtain $C_{\text{loc}}^{1, \frac{1}{p+1}}$ estimates to \eqref{1.1}.
\end{example}

\begin{example}[{\bf Estimates for solutions to ``almost linear'' equations}]
In the recent work \cite{BW}, Bhattacharya and Warren establish $C_{\text{loc}}^{2,\alpha}$ estimates for solutions to almost linear elliptic equations, i.e., fully nonlinear models, which are close to linear ones, provided $F$ is $C^1$-close to a linear operator. Therefore, such a family of operators, we obtain $C_{\text{loc}}^{1, \frac{1}{p+1}}$ estimates to \eqref{1.1}, which is the optimal regularity.
\end{example}

\begin{example}[{\bf Estimates for a class of non-convex operators}]
An interesting application of our results when $F$ belongs to a class of non-convex, fully nonlinear equation approached by Cabr\'{e} and Caffarelli in \cite{CC03}. In such a scenario, our results give $C_{\text{loc}}^{1,\frac{1}{p+1}}$ regularity estimates, which is the optimal to \eqref{1.1}.
\end{example}

\subsection{Recession operator and its regularity estimates}

Regarding the convexity or concavity assumptions on the nonlinearity $F$ in the Example \ref{Ex3}, we can actually relax them. For this purpose, the key ingredient is an available $C_{\text{loc}}^{1, \alpha}$ (for every $\alpha \in (0, 1)$) regularity theory to
$$
  F(D^2u) = 0 \quad \text{in} \quad \Omega.
$$

In that respect, Silvestre-Teixeira in \cite[Theorems 1.1 and 1.4]{ST15} addressed local $C^{1, \alpha}$ regularity estimates to problems with no convex/concave structure. In their approach, the novelty with respect to the former results is the concept of \textit{recession} function, a sort of tangent profile for $F$ at ``infinity'':
$$
  \displaystyle F^{\ast}(X) \defeq  \lim_{\tau \to 0+ } \tau F\left(\frac{1}{\tau}X\right)
$$
In this direction, the authors relaxed the hypothesis of $C_{\text{loc}}^{1,1}$ \textit{a priori} estimates for solutions of the equations without dependence on $x$, by the hypothesis that $F$ is assumed to be ``convex or concave only at the ends of $\textit{Sym}(N)$'', i.e., when $\|D^2 u\| \approx \infty$.
Precisely, they proved that if solutions to the homogeneous equation
$$
F^{\ast}(D^2 u) = 0  \quad \textrm{in} \quad B_1(0)
$$
has $C_{\text{loc}}^{1, \alpha_0}$ \textit{a priori} estimates (for some $\alpha_0 \in (0, 1]$), then solutions to
$$
 	F(D^2 u) = 0  \quad \textrm{in} \quad B_1(0) \quad (\text{resp.}  = f \in L^{\infty})
$$
are $C^{1, \hat{\alpha}}_{\text{loc}}(B_1(0))$ for any $\hat{\alpha} <\min\{1, \alpha_0\}$. We recommend the reader to \cite[Section 1]{daSilRic} and \cite[Section 1]{PT16} for a complete state of the art on this subject.

Therefore, if the recession profile associated to $F$, i.e. $F^{\ast}$, enjoys $C_{\text{loc}}^{1,1}$ \textit{a priori} estimates, then a ``good regularity theory'' is available to solutions of $F(D^2u) = 0$. For this reason, we are able to prove our results to operators under either relaxed or no convexity assumptions on $F$.

As commented above, our results hold for those operators whose recession profile enjoys a suitable \textit{a priori} estimate  (see, \cite[Theorems 1.1 and 1.4]{ST15}).

\begin{example}
For didactic reasons, we will present some operators and their recession counterpart.
For that end, consider $0 < \sigma_1, \ldots , \sigma_N< \infty.$ We have the following examples (see, \cite{daSilRic} and \cite{PT16}):
\begin{enumerate}

  \item[(E1)]({\bf $m$-momentum type operators})
Let $m$ be a fixed odd number. Then, the \textit{$m$-momentum type operator} given by
  $$
  \begin{array}{ccc}
    F_m(D^2 u) & = & F_m(e_1(D^2 u), \cdots, e_n(D^2 u)) \\
     & \defeq & \displaystyle \sum_{j=1}^{N} \sqrt[m]{\sigma_j^m+e_j(D^2 u)^m}-\sum_{j=1}^{N} \sigma_j
  \end{array}
    $$
defines a uniformly elliptic operator which is neither concave nor convex. Moreover,
$$
  \displaystyle F_m^{\ast}(X) = \lim_{\tau \to 0+} \tau F_m\left(\frac{1}{\tau} X\right) = \sum_{j=1}^{N} e_j(X)
$$
is the Laplacian operator.

\item[(E2)]({\bf Perturbation of Pucci's operators})
Let us consider
  $$
  \begin{array}{rcl}
    F(D^2 u) & = &  F(e_1(D^2 u), \cdots, e_N(D^2 u)) \\
     & \defeq & \displaystyle \sum_{j=1}^{N} \left[h(\sigma_j)e_j(D^2 u) + g(e_j(D^2 u))\right],
  \end{array}
  $$
  where $h:(0, \infty) \to (0, \infty)$ is a continuous function and $g: \R \to \R$ is any Lipschitz function such that $g(0) = 0$. Notice that $F$ is a uniformly elliptic operator.
  Moreover,
  $$
  \displaystyle F^{\ast}(X) = \lim_{\tau \to 0+} \tau F \left(\frac{1}{\tau} X\right) = \sum_{j=1}^{N} h(\sigma_j)e_j(X),
  $$
which is, up to a change of coordinates, the Laplacian operator.

\item[(E3)]({\bf Perturbation of the Special Lagrangian equation})
Given $h:[0, \infty) \to \R$ a continuous function, the \textit{``perturbation'' of the Special Lagrangian equation}
$$
  \begin{array}{rcl}
    F(D^2 u) & = &  F(e_1(D^2 u), \cdots, e_N(D^2 u)) \\
     & \defeq & \displaystyle \sum_{j=1}^{N} \left[h(\sigma_j)e_j(D^2 u) + \arctan(e_j(D^2 u))\right],
  \end{array}
$$
defines a uniformly elliptic operator which is neither concave nor convex. Furthermore,
  $$
  \displaystyle F^{\ast}(X) = \lim_{\tau \to 0+} \tau F \left(\frac{1}{\tau} X\right) = \sum_{j=1}^{N} h(\sigma_j)e_j(X),
  $$
which is, once again, a change of the coordinates of the Laplace operator.
\end{enumerate}
\end{example}

\begin{example}
Consider $F:\textit{Sym}(N) \to \R$ a $C^1$ uniformly elliptic operator. Then the \textit{recession profile} $F^{\ast}$ should be understood as the ``limiting equation'' for the natural scaling on $F$. By way of clarification, for a number of operators, it is possible to check the existence of the limit
$$
  \mathfrak{A}_{ij} \defeq \lim_{\|X\|\to \infty} \frac{\partial F}{X_{ij}}(X),
$$
In such a situation we obtain $F^{\ast}(X) = \tr(\mathfrak{A}_{ij}X)$. An interesting illustrative example is the class of Hessian type operators:
$$
\displaystyle  F_m(e_1(D^2 u), \cdots, e_N(D^2 u)) \defeq \sum_{j=1}^{N} \sqrt[m]{1+e_j(D^2 u)^m}-N,
$$
where $m \in \mathbb{N}$ is an odd number. In this scenario,
$$
  \displaystyle F^{\ast}(X) = \sum_{j=1}^{N} e_j(X) \quad (\text{the Laplacian operator}).
$$
\end{example}

\noindent{\bf Acknowledgements.} J.V. da Silva has been partially supported by Coordena\c{c}\~{a}o de Aperfei\c{c}oamento de Pessoal de N\'{i}vel Superior (PNPD/Capes-UnB-Brazil) Grant No. 88887.357992/2019-00, Capes-Brazil Finance Code 001 and CNPq-Brazil under Grant No. 310303/2019-2. J.V. da Silva thanks to IMECC from Universidade Estadual de Campinas - UNICAMP for its warm hospitality and for providing a pleasant working environment during his research visit, where part of this manuscript was written. Gleydson C. Ricarte thanks to Analysis research group of UFC for fostering a pleasant and productive scientific atmosphere. G.C. Ricarte has been partially funded by CNPq-Brazil  under Grant No.303078/2018-9.

We would like to thank the anonymous Referee for insightful comments and suggestions which benefited a lot the final outcome of this manuscript.

\end{document}